\newcommand{\QQ}{\mathbb Q}
\renewcommand{\AA}{\mathbb A}
\newcommand{\ZZ}{\mathbb Z}
\newcommand{\NN}{\mathbb N}
\newcommand{\BB}{\mathbb B}
\newcommand{\LL}{\mathcal{L}}
\newcommand{\calH}{\mathcal{H}}
\newcommand{\calG}{\mathcal{G}}
\newcommand{\calN}{\mathcal{N}}
\newcommand{\calF}{\mathcal{F}}
\newcommand{\calL}{\mathcal{L}}
\newcommand{\calO}{\mathcal{O}}
\newcommand{\J}{\mathfrak{J}}
\newcommand{\DD}{\mathbb D}
\DeclareMathOperator{\FFil}{\underline{\mathrm{Fil}}}
\newcommand{\into}{\hookrightarrow}
\newcommand{\vp}{\varphi}
\newcommand{\Qp}{\QQ_p}
\newcommand{\Zp}{\ZZ_p}
\newcommand{\Qpn}{\mathbb{Q}_{p,n}}
\newcommand{\Dcris}{\DD_{\mathrm{cris}}}
\newcommand{\Brig}{\BB_{\mathrm{rig},\Qp}^+}
\newcommand{\EA}{\calO_E\otimes\AA_{\Qp}^+}
\newcommand{\HIw}{H^1_{\mathrm{Iw}}}
\newcommand{\Bdag}{\BB_{\mathrm{rig},\Qp}^\dag}
\newcommand{\Nrig}{\NN_{\mathrm{rig}}}
\newcommand{\DdR}{\DD_{\mathrm{dR}}}
\newcommand{\DSen}{\DD_{\mathrm{Sen}}}
\DeclareMathOperator{\uCol}{\underline{\mathrm{Col}}}
\DeclareMathOperator{\Char}{Char}
\DeclareMathOperator{\Tw}{Tw}
\DeclareMathOperator{\Iw}{Iw}
\DeclareMathOperator{\Gal}{Gal}
\DeclareMathOperator{\image}{Im}
\DeclareMathOperator{\cris}{cris}
\DeclareMathOperator{\Fil}{Fil}
\DeclareMathOperator{\GL}{GL}
\DeclareMathOperator{\Sel}{Sel}
\DeclareMathOperator{\pr}{pr}
\newtheorem{theorem}{Theorem}[section]
\newtheorem{proposition}[theorem]{Proposition}
\newtheorem{lemma}[theorem]{Lemma}
\newtheorem{corollary}[theorem]{Corollary}
\newtheorem{definition}[theorem]{Definition}
\newtheorem{conjecture}[theorem]{Conjecture}
\newtheorem{assumption}[theorem]{Assumption}
\theoremstyle{definition} 
\newtheorem{remark}[theorem]{Remark}
\newtheorem{lettertheorem}{Theorem}
\begin{document}

\title{Coleman maps and the $p$-adic regulator}

\date{17th October 2010}

\author{Antonio Lei}
\address[Lei]{School of Mathematical Sciences\\
Monash University\\
Clayton, VIC 3800\\
Australia}
\email{antonio.lei@monash.edu}

\author{David Loeffler}
\address[Loeffler]{Mathematics Institute\\
Zeeman Building\\
University of Warwick\\
Coventry CV4 7AL, UK}
\email{d.a.loeffler@warwick.ac.uk}

\author{Sarah Livia Zerbes}
\address[Zerbes]{Department of Mathematics\\
  Harrison Building\\
  University of Exeter\\
  Exeter EX4 4QF, UK
}
\email{s.zerbes@exeter.ac.uk}
\thanks{The authors' research is supported by the following grants: ARC grant DP1092496 (Lei); EPSRC postdoctoral fellowship EP/F04304X/1 (Loeffler); EPSRC postdoctoral fellowship EP/F043007/1 (Zerbes).}

\begin{abstract}
In this paper, we study the Coleman maps for a crystalline representation $V$ with non-negative Hodge-Tate weights via Perrin-Riou's $p$-adic regulator $\calL_V$. Denote by $\calH(\Gamma)$ the algebra of $\Qp$-valued distributions on $\Gamma=\Gal(\Qp(\mu_{p^\infty})\slash\Qp)$. Our first result determines the $\calH(\Gamma)$-elementary divisors of the quotient of $\Dcris(V)\otimes(\Brig)^{\psi=0}$ by the $\calH(\Gamma)$-submodule generated by $(\vp^*\NN(V))^{\psi = 0}$, where $\NN(V)$ is the Wach module of $V$. By comparing the determinant of this map with that of $\calL_V$ (which can be computed via Perrin-Riou's explicit reciprocity law), we obtain a precise description of the images of the Coleman maps. In the case when $V$ arises from a modular form, we get some stronger results about the integral Coleman maps, and we can remove many technical assumptions that were required in our previous work in order to reformulate Kato's main conjecture in terms of cotorsion Selmer groups and bounded $p$-adic $L$-functions.
\end{abstract}

\subjclass[2010]{11R23 (primary); 11F80, 11S25 (secondary)}

\maketitle

\tableofcontents


\section{Introduction}

 \subsection{Background}

  Let $p$ be an odd prime, and write $\QQ_\infty = \QQ(\mu_{p^\infty})$. Define the Galois groups $\Gamma=\Gal(\QQ_\infty/\QQ)$ and $\Gamma_1=\Gal(\QQ_\infty/\QQ(\mu_p))$. Note that $\Gamma\cong\Delta\times\Gamma_1$, where $\Delta$ is cyclic of order $p-1$ and $\Gamma_1 \cong \ZZ_p$. For $H\in\{ \Gamma,\Gamma_1\}$, denote by $\Lambda(H)$ the Iwasawa algebra of $H$, and $\Lambda_{\Qp}(H) = \Lambda(H) \otimes_{\Zp} \Qp$.
  
  Let $V$ be a crystalline representation of $\calG_{\QQ_p}$ of dimension $d$ with non-negative Hodge-Tate weights. We define 
  \[ H^1_{\Iw}(\Qp,V):=\Qp \otimes_{\Zp} \varprojlim_{n} H^1(\QQ(\mu_{p^n}),T),\]
  where $T$ is a $\calG_{\Qp}$-stable $\Zp$-lattice in $V$. This is a $\Lambda_{\Qp}(\Gamma)$-module independent of the choice of $T$. In \cite{leiloefflerzerbes10}, we construct $\Lambda(\Gamma)$-homomorphisms (called the Coleman maps)
  \[
   \uCol_i:H^1_{\Iw}(\Qp,V) \rTo \Lambda_{\Qp}(\Gamma)
  \]
  for $i=1,\dots,d$, depending on a choice of basis of the Wach module $\NN(V)$. In the case when $V=V_f(k-1)$, where $f=\sum a_n q^n$ is a modular eigenform of weight $k$ and level coprime to $p$ (we assume that $a_n\in\QQ$ for the time being in order to simplify notation) and $V_f$ is the $2$-dimensional $p$-adic representation associated to $f$, these maps have two important applications. Firstly, we can define two $p$-adic $L$-functions $L_{p,1}, L_{p,2}\in\Lambda_{\Qp}(\Gamma)$ on applying the Coleman maps to the localisation of the Kato zeta element as constructed in \cite{kato04}. In the supersingular case, i.e. when $p \mid a_p$, this enables us to obtain a decomposition of the $p$-adic $L$-functions defined in \cite{amicevelu75}, which are not elements of $\Lambda_{\Qp}(\Gamma)$ but of the distribution algebra $\calH(\Gamma)$. More precisely, we show that there exists a $2\times2$ matrix $\mathcal{M}\in M(2,\calH(\Gamma_1))$ depending only on $k$ and $a_p$ such that
  \[
   \begin{pmatrix} L_{p,\alpha}\\ L_{p,\beta} \end{pmatrix} = \mathcal{M} \begin{pmatrix} L_{p,1}\\ L_{p,2}\end{pmatrix}.
  \]
  This generalises the results of of Pollack \cite{pollack03} (when $a_p=0$) and Sprung \cite{sprung09} (when $f$ corresponds to an elliptic curve over $\QQ$ and $p=3$). Secondly, by modifying the local conditions at $p$ in the definition of the $p$-Selmer group using the kernels of the maps $\uCol_i$, we define two new Selmer groups $\Sel_p^i(f/\QQ_\infty)$. These are both $\Lambda(\Gamma)$-cotorsion, which is not true of the usual Selmer group in the supersingular case.

  Fixing a character $\eta$ of $\Delta$ and restricting to the $\eta$-isotypical component, we get maps 
  \[ \uCol^\eta_i:H^1_{\Iw}(\Qp,V)^\eta \rightarrow \Lambda_{\Qp}(\Gamma_1).\] 
  Via the Poitou-Tate exact sequence, we can reformulate Kato's main conjecture (after tensoring with $\Qp$) as follows:

  \begin{conjecture}\label{1stconj}
   For $i=1,2$, and each character $\eta$ of $\Delta$,
   \[
    \Char_{\Lambda_{\Qp}(\Gamma_1)}\Big(\Qp \otimes_{\Zp} \Sel_p^i(f/\QQ_\infty)^{\eta,\vee}\Big)=\Char_{\Lambda_{\Qp}(\Gamma_1)}\Big(\image(\uCol_i^\eta)/(L_{p,i}^\eta)\Big)
   \]
   where $M^\vee$ denotes the Pontryagin dual of a $\Lambda_{\Qp}(\Gamma_1)$-module $M$ and $\Char_{\Lambda_{\Qp}(\Gamma_1)} M$ denotes the $\Lambda_{\Qp}(\Gamma_1)$-characteristic ideal of $M$.
  \end{conjecture}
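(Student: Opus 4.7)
The plan is to derive Conjecture \ref{1stconj} from Kato's original main conjecture by using the Coleman maps $\uCol_i^\eta$ to translate between global Iwasawa-theoretic objects and distributions on $\Gamma_1$.

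First, I would invoke global Poitou--Tate duality for the Iwasawa cohomology of $V_f(k-1)$. After restricting to $\eta$-isotypic components and tensoring with $\Qp$, this produces a four-term exact sequence which, using that the local condition at $p$ defining $\Sel_p^i$ is precisely $\ker(\uCol_i)$, identifies $\Qp \otimes \Sel_p^i(f/\QQ_\infty)^{\eta,\vee}$ with the cokernel of the composite
\[
H^1_{\Iw}(\QQ, V_f(k-1))^\eta \xrightarrow{\mathrm{loc}_p} H^1_{\Iw}(\Qp, V_f(k-1))^\eta \xrightarrow{\uCol_i^\eta} \Lambda_{\Qp}(\Gamma_1),
\]
up to modules whose $\Lambda_{\Qp}(\Gamma_1)$-characteristic ideal is trivial. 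The key point here is that under local Tate duality the annihilator of $\ker(\uCol_i)$ corresponds, up to a unit, to $\image(\uCol_i)$ viewed inside a Coleman-type pairing on the dual representation.

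Next, I would take characteristic ideals in the resulting exact sequence to get
\[
\Char_{\Lambda_{\Qp}(\Gamma_1)}\Big(\Qp \otimes \Sel_p^i(f/\QQ_\infty)^{\eta,\vee}\Big) = \Char_{\Lambda_{\Qp}(\Gamma_1)}\Big(\image(\uCol_i^\eta) \big/ (\uCol_i^\eta \circ \mathrm{loc}_p)(H^1_{\Iw}(\QQ, V_f(k-1))^\eta)\Big).
\]
Kato's main conjecture asserts that, up to the characteristic ideal of an auxiliary Selmer group, $H^1_{\Iw}(\QQ, V_f(k-1))$ is generated by the Kato zeta element $\mathbf{z}_{\mathrm{Kato}}$; and by the very construction of $L_{p,i}$ one has $\uCol_i^\eta(\mathrm{loc}_p(\mathbf{z}_{\mathrm{Kato}}^\eta)) = L_{p,i}^\eta$. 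Substituting this identification into the above display converts the denominator on the right to the principal ideal $(L_{p,i}^\eta)$, yielding the RHS of Conjecture \ref{1stconj}.

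The main obstacle is the first step: verifying that all the error terms in the Poitou--Tate sequence (contributions from bad primes, $\Lambda$-torsion coming from $H^2_{\Iw}$, kernels of the localisation map, etc.) have trivial characteristic ideal after tensoring with $\Qp$, and that local Tate duality really does swap $\ker(\uCol_i)$ with $\image(\uCol_i)$ up to a unit. A secondary difficulty, explicitly flagged in the abstract as one of the paper's contributions, is removing the technical hypotheses on the Coleman maps --- such as their injectivity or the absence of trivial zeros --- that constrained the reformulation in \cite{leiloefflerzerbes10}; this is where the determinant computation for $\calL_V$ described in the abstract feeds back in, as it pins down $\image(\uCol_i^\eta)$ precisely enough to make the final equality of ideals meaningful.
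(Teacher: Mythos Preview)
The statement you are addressing is a \emph{conjecture}, not a theorem: the paper does not prove Conjecture~\ref{1stconj}. It is introduced in the background section with the sentence ``Via the Poitou--Tate exact sequence, we can reformulate Kato's main conjecture (after tensoring with $\Qp$) as follows'', and the reformulation itself is attributed to the authors' earlier work \cite{leiloefflerzerbes10}. The results actually proved in the present paper (Theorems~\ref{2ndconj} and~\ref{lettertheorem:1stconj2}) establish that Conjecture~\ref{1stconj} is \emph{equivalent} to certain cleaner statements about characteristic ideals, once the images of the Coleman maps are pinned down; they do not prove the conjecture.

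Your proposal is therefore not a proof of the stated conjecture but rather a sketch of why Conjecture~\ref{1stconj} is equivalent to Kato's main conjecture --- i.e.\ a justification of the word ``reformulate'' in the introduction. As such it is broadly reasonable and follows the expected Poitou--Tate route, but there is nothing in this paper to compare it against: the argument you outline lives in \cite{leiloefflerzerbes10}, not here. If your intention was genuinely to prove Conjecture~\ref{1stconj} outright, that would require proving Kato's main conjecture, which is well beyond the scope of both your proposal and the paper.
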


  When $v_p(a_p)$ is sufficiently large, we make use of the basis of $\NN(V)$ constructed in \cite{bergerlizhu04} to show that the first Coleman map is surjective under some additional technical conditions. Therefore, we can rewrite Conjecture~\ref{1stconj} as follows (see \cite[Corollary 6.6]{leiloefflerzerbes10}):

  \begin{theorem}\label{2ndconj}
   Under certain technical conditions, the case of $i=1$ in Conjecture~\ref{1stconj} is equivalent to the assertion that $\Char_{\Lambda_{\Qp}(\Gamma_1)}\Big(\Qp \otimes_{\Zp} \Sel_p^1(f/\QQ_\infty)^{\eta,\vee}\Big)$ is generated by $L_{p,1}^\eta$.
  \end{theorem}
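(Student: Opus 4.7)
The plan is to show that under the technical conditions the Coleman map $\uCol_1^\eta$ becomes surjective, so the right-hand side of Conjecture~\ref{1stconj} simplifies dramatically. Concretely, if $\image(\uCol_1^\eta) = \Lambda_{\Qp}(\Gamma_1)$ and $L_{p,1}^\eta \neq 0$, then
\[
 \image(\uCol_1^\eta)/(L_{p,1}^\eta) \;=\; \Lambda_{\Qp}(\Gamma_1)/(L_{p,1}^\eta),
\]
whose characteristic ideal is just $(L_{p,1}^\eta)$. Substituting this into the case $i=1$ of Conjecture~\ref{1stconj} gives exactly the stated reformulation, and conversely the reformulation together with surjectivity of $\uCol_1^\eta$ recovers Conjecture~\ref{1stconj}. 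So the whole theorem reduces to two assertions: (a) $\uCol_1^\eta$ is surjective onto $\Lambda_{\Qp}(\Gamma_1)$, and (b) $L_{p,1}^\eta$ is a non-zero element of $\Lambda_{\Qp}(\Gamma_1)$ (so that dividing by it makes sense and the quotient is torsion of characteristic ideal $(L_{p,1}^\eta)$).

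For (a), the approach is to combine the first main result announced in the abstract — the description of the $\calH(\Gamma)$-elementary divisors of $\Dcris(V)\otimes(\Brig)^{\psi=0}$ modulo $(\vp^*\NN(V))^{\psi=0}$ — with the explicit Wach module basis of Berger--Li--Zhu in the large slope regime. Tracing through Perrin-Riou's explicit reciprocity law, this description pins down the image of each $\uCol_i$ inside $\Lambda_{\Qp}(\Gamma)$ up to an elementary divisor computation. Under the technical conditions (notably $v_p(a_p)$ sufficiently large, plus the auxiliary hypotheses carried over from \cite{leiloefflerzerbes10}), the elementary divisor corresponding to the first basis vector of $\NN(V)$ turns out to be a unit, which forces surjectivity of $\uCol_1$; passing to the $\eta$-isotypical component is then automatic. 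For (b), the non-vanishing of $L_{p,1}^\eta$ follows from its interpolation property (via the image of the Kato zeta element under $\uCol_1^\eta$), or from the non-vanishing of the Amice--Vélu $p$-adic $L$-functions together with the matrix decomposition $\mathcal{M}$ relating $(L_{p,\alpha},L_{p,\beta})$ to $(L_{p,1},L_{p,2})$.

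The main obstacle, and the technical heart of the argument, is step (a): one needs the elementary-divisor calculation to interact cleanly with the Berger--Li--Zhu basis so that the first Coleman map lands onto the whole Iwasawa algebra, not merely into a proper ideal. This is what controls which ``technical conditions'' must be imposed; conditions like large $v_p(a_p)$, appropriate normalisation of the basis of $\NN(V)$, and vanishing of certain small-weight obstructions all enter here. Once surjectivity is established, the rest of the proof is essentially a formal manipulation of characteristic ideals, as sketched above.
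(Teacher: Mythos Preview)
Your high-level strategy is right: the whole content of the theorem is that, under the technical hypotheses, $\uCol_1^\eta$ is surjective, after which the equivalence with Conjecture~\ref{1stconj} for $i=1$ is a one-line manipulation of characteristic ideals. This matches what the paper says.

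However, note that this theorem is not proved in the present paper at all: it is quoted from \cite[Corollary~6.6]{leiloefflerzerbes10}, and the sentence preceding it explains the method used there---namely, when $v_p(a_p)$ is large one has the explicit Berger--Li--Zhu basis of $\NN(V)$, and a direct computation with that basis yields surjectivity of the first Coleman map. No elementary divisor computation is involved.

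Your step (a), by contrast, proposes to deduce surjectivity by combining Theorem~\ref{lettertheorem:HGelem} (the $\calH(\Gamma)$-elementary divisors) with the BLZ basis. This is anachronistic and also does not quite work as stated. Theorem~\ref{lettertheorem:HGelem} is a \emph{new} result of the present paper; its role here is precisely to \emph{remove} the technical conditions and obtain the stronger Theorem~\ref{lettertheorem:1stconj2}, not to reprove the restricted version quoted from \cite{leiloefflerzerbes10}. Moreover, the elementary divisor computation and its consequences (Corollaries~\ref{exactsequence} and~\ref{corollary:projectioncoleman}) only tell you that $\image(\uCol_1^\eta)$ has the form $\prod_{j\in I_1^\eta}(X-u^j+1)\Lambda_E(\Gamma_1)$ for \emph{some} subset $I_1^\eta$; they do not determine that subset. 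To conclude $I_1^\eta=\varnothing$ one still needs specific input about the chosen basis, and in \cite{leiloefflerzerbes10} that input is the explicit BLZ construction together with the auxiliary hypotheses---the elementary divisor machinery is neither needed nor sufficient for that step.

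In short: your reduction to surjectivity is correct and is exactly the paper's (one-sentence) argument, but your proposed route to surjectivity conflates the methods of the present paper with those of \cite{leiloefflerzerbes10}. The original proof is a direct calculation with the BLZ basis; the elementary divisor result is what lets the present paper supersede that calculation.
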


  (In fact we can show that this equivalence holds integrally, i.e.~without tensoring with $\Qp$.)


 \subsection{Main results}

  In this paper, we extend the above results in several ways. Let $V$ be a crystalline representation of $\mathcal{G}_{\Qp}$ of dimension $d$ with non-negative Hodge-Tate weights. 

  We identify $\Lambda(\Gamma_1)$ with the power series ring $\Zp[[X]]$, where $X = \gamma - 1$ for a topological generator $\gamma$ of $\Gamma_1$. Denote by $\chi:\calG_{\Qp}\rightarrow \ZZ_p^\times$ the cyclotomic character. To simplify the notation, write $u=\chi(\gamma)$. 

  Firstly, we study the structure of $\Nrig(V) := \NN(V) \otimes_{\BB^+_{\Qp}} \Brig$ as a $\Gamma$-module. If $\vp^* \Nrig(V)$ denotes the $\Brig$-span of $\vp(\Nrig(V))$, then $(\vp^* \Nrig(V))^{\psi = 0}$ is contained in $(\Brig)^{\psi = 0} \otimes_{\Qp}\Dcris(V)$, and both are free $\calH(\Gamma)$-modules of rank equal to $d = \dim_{\Qp} V$. We determine the elementary divisors of the quotient of these modules:
 
  \begin{lettertheorem}[Theorem \ref{theorem:HGelem}]
  \label{lettertheorem:HGelem}
   The $\calH(\Gamma)$-elementary divisors of the quotient 
   \[
    \DD_{\cris}(V)\otimes_{\QQ_p}\calH(\Gamma) / (\vp^* \Nrig(V))^{\psi = 0}
   \]
   are $\mathfrak{n}_{r_1}, \dots, \mathfrak{n}_{r_d}$, where $r_1, \dots, r_d$ are the Hodge-Tate weights of $V$ and 
   \[
    \mathfrak{n}_k = \frac{\log(1+X)}{X}\cdots\frac{\log(u^{1-k}(1+X))}{X-u^{k-1}+1}.
   \]
  \end{lettertheorem}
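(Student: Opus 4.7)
The plan is to convert the problem into a Smith normal form computation over the Bezout domain $\calH(\Gamma)$. Via the Mellin transform $\mathfrak{M} \colon \calH(\Gamma) \xrightarrow{\sim} (\Brig)^{\psi=0}$, $f \mapsto f \cdot (1+\pi)$, the codomain $\Dcris(V) \otimes_{\Qp} \calH(\Gamma)$ is identified with $\Dcris(V) \otimes_{\Qp}(\Brig)^{\psi=0}$, so the inclusion of $(\vp^* \Nrig(V))^{\psi=0}$ becomes a morphism between two free $\calH(\Gamma)$-modules of rank $d$; the task is to put its matrix, in well-chosen bases, into Smith normal form.

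The first step is to exploit the structure theorem for Wach modules of crystalline representations with non-negative Hodge-Tate weights $r_1 \leq \cdots \leq r_d$, choosing a $\BB_{\Qp}^+$-basis $n_1, \ldots, n_d$ of $\NN(V)$ whose reductions modulo $\pi$ form a basis of $\Dcris(V) = \NN(V)/\pi\NN(V)$ adapted to the Hodge filtration, with $\bar n_i \in \Fil^{r_i}\Dcris(V)$. Relative to such a basis, the inclusion $\vp^*\NN(V) \subseteq \NN(V)$ becomes upper triangular with $i$-th diagonal entry of the form $q^{r_i}$ times a unit of $\BB_{\Qp}^+$, where $q = \vp(\pi)/\pi$; extending scalars to $\Brig$ yields the analogous description of $\vp^*\Nrig(V) \subseteq \Nrig(V)$.

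The second step is to apply the $\psi = 0$ functor and the Mellin transform to translate each diagonal factor $q^{r_i}$ into an explicit element of $\calH(\Gamma)$. The crux is the classical identity that, up to units, multiplication by $q(1+\pi)/t$ on $(\Brig)^{\psi=0}$ corresponds under $\mathfrak{M}^{-1}$ to multiplication by $\log(1+X)/X$ on $\calH(\Gamma)$; iterating this $r_i$ times, and incorporating the necessary $\vp$-twists, introduces the operators $\Tw_{-j}$ that produce the factors $\log(u^{-j}(1+X))/(X + 1 - u^j)$ whose product is $\mathfrak{n}_{r_i}$. Thus in the chosen bases, the inclusion matrix is upper triangular with diagonal entries $\mathfrak{n}_{r_1}, \ldots, \mathfrak{n}_{r_d}$ up to units in $\calH(\Gamma)$.

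The main obstacle is showing that these diagonal entries are genuinely the invariant factors, and are not perturbed by the strictly upper-triangular contributions. I would handle this by a localisation argument: since $\calH(\Gamma)$ is a Bezout domain whose closed points correspond to locally algebraic characters of $\Gamma$, the invariant factors are determined by the dimensions of the quotient after specialisation at each character. At the characters $\chi^j$ for $0 \leq j \leq r_i - 1$, which are the only zeros of $\mathfrak{n}_{r_i}$, the local rank of the quotient can be computed directly from the filtration and matches the multiplicity predicted by the list $\mathfrak{n}_{r_1}, \ldots, \mathfrak{n}_{r_d}$. Combined with the total determinant, computable via the compatibility with Perrin-Riou's regulator $\calL_V$ and the explicit reciprocity law alluded to in the abstract, this forces the invariant factors to be exactly the claimed $\mathfrak{n}_{r_i}$.
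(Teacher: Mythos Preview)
Your first step contains the essential gap. You propose to choose a basis of $\NN(V)$ whose reductions are adapted to the Hodge filtration and assert that in such a basis the matrix relating $\NN(V)$ and $\vp^*\NN(V)$ is upper triangular with diagonal entries $q^{r_i}$. But the Hodge filtration on $\Dcris(V)$ is \emph{not} $\vp$-stable in general, so there is no reason for a Hodge-adapted basis to put $\vp$ (or the inclusion map) into triangular form. This is precisely the obstruction the paper has to work around. Its device is a \emph{non-critical refinement}: a full flag $Y_1 \subsetneq \cdots \subsetneq Y_d = \Dcris(V)$ which is $\vp$-stable (so that the associated saturated submodules $\mathcal{X}_i \subset \Nrig(V)$ give a $\vp$- and $\Gamma$-stable filtration) and is in general position with respect to the Hodge filtration (so that the graded pieces have the correct Hodge--Tate weights). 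The existence of such a refinement already requires the Frobenius eigenvalues to lie in the coefficient field, which the paper arranges by a base-change argument at the end. Without this refinement machinery, your triangularity claim is unjustified. (A minor symptom of the same confusion: you write the inclusion as $\vp^*\NN(V) \subseteq \NN(V)$, but for non-negative Hodge--Tate weights it is $\NN(V) \subseteq \vp^*\NN(V)$; see Lemma~\ref{lemma:vpstar}.)

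Your final step also has a logical issue. You propose to pin down the determinant via the compatibility with $\calL_V$ and Perrin-Riou's reciprocity law. But the reciprocity law gives $\det(\calL_V)$, and $\calL_V$ factors as $\uCol$ followed by the inclusion whose determinant you want; you would need to already know $\det(\uCol)$ to extract $\det(\underline{M})$. In the paper the flow is the reverse: Theorem~\ref{theorem:HGelem} is proved first, purely from the refinement filtration and Corollary~\ref{corollary:elem-divisors}, and only afterwards is it combined with $\det(\calL_V)$ to compute the image of $\uCol$. Your proposed argument would make that later deduction circular.
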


  This can be seen as a $\calH(\Gamma)$-module analogue of \cite[Proposition III.2.1]{berger04}, which states that the $\Brig$-elementary divisors of the quotient 
  \[ (\Brig \otimes_{\Qp} \Dcris(V)) / \Nrig(V)\]
  are $\left(\frac{t}{\pi}\right)^{r_1},\dots,\left(\frac{t}{\pi}\right)^{r_d}$. It is striking to note that for any $k\geq 0$, the Mellin transform of $\mathfrak{n}_k$ agrees with $(1+\pi)\vp\left(\frac{t}{\pi}\right)^k$ up to a unit in $\Brig$ (see Proposition~\ref{prop:annihilator}). 

  The second aim of this paper is to use Theorem~\ref{lettertheorem:HGelem} to determine the image of the map
  \[ 1 - \vp: \Nrig(V)^{\psi = 1} \to (\vp^* \Nrig(V))^{\psi = 0}.\]
  To do this, we make use of the following commutative diagram of $\mathcal{H}(\Gamma)$-modules:
  \begin{diagram}
   \NN(V)^{\psi = 1} & \rTo^\cong_{h^1_{\Iw, V}} & H^1_{\Iw}(V)\\
   \dTo^{1 - \vp} & &  \\
   (\vp^*\Nrig(V))^{\psi = 0} & & \dTo^{\LL_V}\\
   \dInto & &  \\
   \Dcris(V) \otimes_{\Qp} (\Brig)^{\psi = 0}& \rTo^{1 \otimes \mathfrak{M}^{-1}} & \Dcris(V) \otimes_{\Qp} \calH(\Gamma)\\
  \end{diagram}

  Here the map $\LL_V$ is Perrin-Riou's regulator map, and $\mathfrak{M}: \calH(\Gamma) \rTo^\cong (\Brig)^{\psi = 0}$ denotes the Mellin transform. The commutativity of the diagram is a theorem of Berger \cite[Theorem II.13]{berger03}. Colmez's proof of the ``$\delta_V$-conjecture'' (see \cite[Theorem IX.4.4]{colmez98}), which is part of Perrin-Riou's explicit reciprocity law, gives a formula for the determinant of the matrix of $\LL_V$ (up to units). We can compare this with the determinant of the bottom left-hand map, which follows from Theorem~\ref{lettertheorem:HGelem}, to deduce that $1 - \vp: \Nrig(V)^{\psi = 1} \to (\vp^*\Nrig(V))^{\psi = 0}$ is surjective up to a small error term:

  \begin{lettertheorem}[Corollary \ref{exactsequence}]
  \label{lettertheorem:exactsequence}
   Suppose that no eigenvalue of $\vp$ on $\Dcris(V)$ lies in $p^\ZZ$. Then for each character $\eta$ of $\Delta$, there is a short exact sequence of $\calH(\Gamma_1)$-modules
   \[ 0 \rTo \NN(V)^{\psi = 1, \eta} \rTo^{1-\vp} (\vp^* \NN(V))^{\psi = 0,  \eta} \rTo^{A_\eta} \bigoplus_{i = 0}^{r_d - 1} (\Dcris(V) / V_{i, \eta})(\chi^i \chi_0^{-i} \eta) \rTo 0.\]
   Here $V_{i, \eta}$ is a subspace of $\Dcris(V)$ of the same dimension as $\Fil^{-i} \Dcris(V)$, and the map $A_{\eta}$ is the composition of the inclusion of $(\vp^*\Nrig(V))^{\psi = 0}$ in $\Dcris(V) \otimes_{\Qp} \calH(\Gamma)$ with the map $\bigoplus_i (\operatorname{id} \otimes A_{\eta, i})$, where $A_{\eta, i}$ is the natural reduction map $\calH(\Gamma) \to \Qp(\chi^i \chi_0^{-i} \eta)$ obtained by quotienting out by the ideal $(X + 1 - u^i) \cdot e_{\eta}$.
  \end{lettertheorem}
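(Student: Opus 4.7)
The commutative diagram identifies $\LL_V$ with the composition $\iota\circ(1-\vp)$, where $\iota\colon (\vp^*\Nrig(V))^{\psi=0}\hookrightarrow\Dcris(V)\otimes_{\Qp}\calH(\Gamma)$ is the inclusion composed with $1\otimes\mathfrak{M}^{-1}$. Injectivity of $1-\vp$ then follows from injectivity of $\LL_V$, which holds because $\det(\LL_V)$ is nonzero under the eigenvalue hypothesis. The cokernel of $1-\vp$ is $\image(\iota)/\image(\LL_V)$, and it sits in the short exact sequence
\[
0\to \image(\iota)/\image(\LL_V)\to \Dcris(V)\otimes_{\Qp}\calH(\Gamma)/\image(\LL_V)\to\Dcris(V)\otimes_{\Qp}\calH(\Gamma)/\image(\iota)\to 0,
\]
whose rightmost quotient has $\calH(\Gamma)$-elementary divisors $\mathfrak{n}_{r_1},\dots,\mathfrak{n}_{r_d}$ by Theorem~\ref{lettertheorem:HGelem}.

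I would then compare characteristic ideals on each $\eta$-isotypic component. Colmez's proof of the $\delta_V$-conjecture gives an explicit formula for $\det(\LL_V^\eta)$ as a product of $\log(u^{-i}(1+X))$-factors (one for each $i<r_j$, for each HT weight $r_j$) times Euler-factor determinants in $\vp$ on $\Dcris(V)$; under the assumption that no $\vp$-eigenvalue lies in $p^\ZZ$ these Euler factors are units in $\calH(\Gamma_1)$. Dividing by $\det(\iota^\eta)=\prod_j\mathfrak{n}_{r_j}^\eta$ and cancelling the $\log$-factors yields
\[
\det(1-\vp)^\eta \,\sim\, \prod_{j=1}^d\prod_{i=0}^{r_j-1}(X+1-u^i),
\]
a product of linear factors whose roots $X=u^i-1$ are pairwise distinct. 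The B\'ezout structure of $\calH(\Gamma_1)$ and the Chinese Remainder Theorem thus force the cokernel on the $\eta$-component to split canonically as $\bigoplus_{i=0}^{r_d-1}W_i$, where $W_i$ is annihilated by $(X+1-u^i)e_\eta$ and has $\Qp$-dimension $|\{j:r_j>i\}|=d-\dim\Fil^{-i}\Dcris(V)$.

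Since $\calH(\Gamma)/(X+1-u^i)e_\eta\cong \Qp(\chi^i\chi_0^{-i}\eta)$ (as $\gamma$ acts by $u^i=\chi^i(\gamma)$ and $\Delta$ by $\eta$), the reduction maps $A_{\eta,i}$ send $(\vp^*\Nrig(V))^{\psi=0,\eta}$ into $\Dcris(V)(\chi^i\chi_0^{-i}\eta)$. Taking $V_{i,\eta}\subset \Dcris(V)$ to be the image of $\LL_V^\eta$ under this reduction identifies $W_i$ with $(\Dcris(V)/V_{i,\eta})(\chi^i\chi_0^{-i}\eta)$, and $A_\eta$ is the direct sum of the $A_{\eta,i}$.

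The main technical step is to verify $\dim V_{i,\eta}=\dim\Fil^{-i}\Dcris(V)$, which is needed to match $\Qp$-dimensions and conclude surjectivity of $A_\eta$. This is where Perrin-Riou's explicit reciprocity enters: the specialisation of $\LL_V^\eta$ at $\chi^i\chi_0^{-i}\eta$ is, up to a nonzero scalar, the Bloch-Kato (dual) exponential composed with an automorphism of $\Dcris(V)$ induced by the Euler factor at $\chi^i$; the image of this composition is $\Fil^{-i}\Dcris(V)$, and the Euler factor is an automorphism precisely because no $\vp$-eigenvalue is a power of $p$.
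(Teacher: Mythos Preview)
Your approach is essentially the paper's: use explicit reciprocity to constrain the specialisations of $\LL_V$ at $X=u^i-1$, then compare determinants (Theorem~\ref{lettertheorem:HGelem} against the $\delta(V)$-conjecture) to see that these constraints exhaust the cokernel of $1-\vp$. However, the logical order in your sketch is tangled and two of your steps overclaim.

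In step~4 you assert that the determinant formula alone forces each primary component $W_i$ of $\coker(1-\vp)^\eta$ to be \emph{annihilated by the first power} of $(X+1-u^i)$. The Chinese Remainder Theorem gives you the splitting into primary pieces and the characteristic ideal tells you $\dim_{\Qp} W_i=d-n_i$, but nothing at this stage rules out $W_i\cong \calH(\Gamma_1)/(X+1-u^i)^{d-n_i}$ rather than $\big(\calH(\Gamma_1)/(X+1-u^i)\big)^{d-n_i}$. Semisimplicity of the cokernel is a genuine conclusion of the theorem, not an input.

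In step~6 you say the specialisation of $\LL_V^\eta$ at $\chi^i\chi_0^{-i}\eta$ has image \emph{equal} to an automorphism of $\Fil^{-i}\Dcris(V)$. The reciprocity law only gives containment: pairing with $\Fil^0\Dcris(V^*(1+i))$ vanishes because $\exp_{\Qp,V^*(1+i)}$ kills $\Fil^0$, so the specialised image lies in the annihilator of $\Fil^0$ under the duality, which is the twisted copy of $\Fil^{-i}\Dcris(V)$. That the image is all of this space is not immediate---you do not control the image of $H^1_{\Iw}$ under projection to finite level and then $\exp^*$ directly.

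The paper's ordering fixes both issues at once. First define $V_{i,\eta}$ explicitly as the image of $\Fil^{-i}\Dcris(V)$ under the relevant Euler-factor automorphism (so $\dim V_{i,\eta}=n_i$ by fiat), and use reciprocity to show $\image(\LL_V^\eta)|_{X=u^i-1}\subseteq V_{i,\eta}$. This gives $\image(1-\vp)^\eta\subseteq\ker(A_\eta)$. Surjectivity of $A_\eta$ follows from CRT together with the fact that $\underline{M}|_{X=u^i-1}$ is invertible (Theorem~\ref{lettertheorem:HGelem}: the $\mathfrak{n}_{r_j}$ do not vanish at $u^i-1$). Now the determinant comparison shows $\dim_{\Qp}\coker(1-\vp)^\eta=\sum_i(d-n_i)=\dim_{\Qp}\bigoplus_i\Dcris(V)/V_{i,\eta}$, and the containment becomes equality. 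The semisimplicity of $W_i$ and the exactness of the image description then fall out as corollaries, not as hypotheses.
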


  Using this we can describe the images of the Coleman maps (for any choice of basis of $\NN(V)$):

  \begin{lettertheorem}[Corollary \ref{corollary:projectioncoleman}]
  \label{lettertheorem:images}
   Let $\eta$ be any character of $\Delta$. Then for all $1\leq i\leq d$,
   \[\image(\uCol_i^\eta)=\prod_{j\in I_i^\eta}(X-\chi(\gamma)^j+1)\Lambda_{\Qp}(\Gamma_1)\]
   for some $I_i^\eta\subset\{0,\ldots,r_d-1\}$.
  \end{lettertheorem}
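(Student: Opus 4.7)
The plan is to leverage Theorem~\ref{lettertheorem:exactsequence} together with an elementary linear-algebra analysis at the finite set of zeros of the factors $X - u^j + 1$.

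Fix the basis $n_1, \ldots, n_d$ of $\NN(V)$ defining the maps $\uCol_i$. Then $(1+\pi)\vp(n_1), \ldots, (1+\pi)\vp(n_d)$ form a $\Lambda(\Gamma)$-basis of $(\vp^*\NN(V))^{\psi=0}$, and by construction $\uCol_i$ is the composition
\[
H^1_{\Iw}(\Qp, V) \xrightarrow{(h^1_{\Iw,V})^{-1}} \NN(V)^{\psi=1} \xrightarrow{1-\vp} (\vp^*\NN(V))^{\psi=0} \xrightarrow{\pr_i} \Lambda_{\Qp}(\Gamma),
\]
where $\pr_i$ extracts the $i$-th coordinate in the above basis. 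Passing to the $\eta$-isotypical component, Theorem~\ref{lettertheorem:exactsequence} gives $\image(\uCol_i^\eta) = \pr_i(\ker A_\eta) \subseteq \Lambda_{\Qp}(\Gamma_1)$. Under the Mellin identification, an element $\sum_{k} c_k (1+\pi)\vp(n_k) \in (\vp^*\NN(V))^{\psi=0, \eta}$ corresponds to $\sum_k \vp(n_k) \otimes c_k \in \Dcris(V) \otimes_{\Qp} \calH(\Gamma_1)$, so the description of $A_{\eta,j}$ in Theorem~\ref{lettertheorem:exactsequence} shows that this tuple lies in $\ker A_{\eta,j}$ exactly when
\[
\sum_{k=1}^{d} c_k(u^j - 1)\, \overline{\vp(n_k)} \in V_{j, \eta} \quad \text{in } \Dcris(V),
\]
where $\overline{\vp(n_k)}$ denotes the image of $\vp(n_k)$ in $\Dcris(V) = \NN(V)/\pi\NN(V)$, and these form a $\Qp$-basis.

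For each fixed $j \in \{0, \ldots, r_d-1\}$ the admissible tuples $(c_k(u^j-1))_k$ form a $\Qp$-subspace $W_j \subseteq \Qp^d$, and its projection to the $i$-th coordinate is either all of $\Qp$ or $\{0\}$: explicitly, setting
\[
I_i^\eta := \{j : \overline{\vp(n_i)} \notin V_{j,\eta} + \mathrm{span}_{\Qp}(\overline{\vp(n_k)} : k \neq i) \},
\]
the projection on $W_j$ vanishes precisely for $j \in I_i^\eta$. Hence $c_i(u^j - 1) = 0$ for every $j \in I_i^\eta$ is a necessary condition on $c_i \in \image(\uCol_i^\eta)$.

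For the converse, the points $u^j - 1$ for $j = 0, \ldots, r_d - 1$ are pairwise distinct in $\Qp$ (since $u = \chi(\gamma)$ has infinite order modulo torsion), so the linear factors $X - u^j + 1$ are pairwise coprime in $\Lambda_{\Qp}(\Gamma_1) = \Qp[[X]]$. Given any $c_i \in \Lambda_{\Qp}(\Gamma_1)$ vanishing at all $u^j - 1$ for $j \in I_i^\eta$, one may choose for each $j$ a tuple $(c_k(u^j - 1))_{k\neq i} \in \Qp^{d-1}$ satisfying the linear condition above (taking $0$ when $j \in I_i^\eta$), and then lift these finitely many prescribed values to elements $c_k \in \Lambda_{\Qp}(\Gamma_1)$ via the Chinese Remainder Theorem. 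This exhibits $c_i$ in $\image(\uCol_i^\eta)$ and yields
\[
\image(\uCol_i^\eta) = \prod_{j \in I_i^\eta} (X - u^j + 1)\, \Lambda_{\Qp}(\Gamma_1),
\]
as required. The main input is Theorem~\ref{lettertheorem:exactsequence}; the remainder is the pointwise linear algebra, which becomes elementary once one notes that $A_\eta$ decomposes as reductions at the finitely many distinct points $u^j - 1$. The only mild subtlety is the Chinese Remainder step, which works thanks to $\Lambda_{\Qp}(\Gamma_1) = \Qp[[X]]$ and the pairwise distinctness of the evaluation points in $\Qp$.
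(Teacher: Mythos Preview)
Your approach is essentially the paper's: invoke Theorem~\ref{lettertheorem:exactsequence} to identify $\image(\uCol^\eta)$ with $\ker A_\eta$, then analyse the projection via pointwise linear algebra at the finitely many $u^j-1$ and a Chinese Remainder interpolation. This is exactly the content of the paper's Lemma~\ref{projection}, applied after Theorem~\ref{equalitycoleman}.

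There is one notational error worth flagging. You write that under the Mellin identification $\sum_k c_k(1+\pi)\vp(n_k)$ corresponds to $\sum_k \vp(n_k)\otimes c_k$ with $\overline{\vp(n_k)}$ the image of $\vp(n_k)$ in $\Dcris(V)=\NN(V)/\pi\NN(V)$. This is not how the embedding works: the inclusion $(\vp^*\NN(V))^{\psi=0}\hookrightarrow\Dcris(V)\otimes\calH(\Gamma)$ is given by the matrix $\underline{M}$ of \eqref{eq:defnM}, so $(1+\pi)\vp(n_k)\mapsto\sum_l M_{kl}\otimes\nu_l$, and evaluating $A_{\eta,j}$ sends it to $w_k^{(j)}:=\sum_l M_{kl}(u^j-1)\,\nu_l\in\Dcris(V)$, which genuinely depends on $j$ and is not the mod-$\pi$ reduction $\vp(\nu_k)$. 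Fortunately this does not damage your argument: what you actually use is only that for each $j$ the vectors $w_1^{(j)},\dots,w_d^{(j)}$ form a basis of $\Dcris(V)$, which holds because $\det\underline{M}\big|_{X=u^j-1}\ne 0$ by Corollary~\ref{detuM}. With $\overline{\vp(n_k)}$ replaced by $w_k^{(j)}$ throughout (and $I_i^\eta$ defined accordingly, now with the subspace condition depending on $j$), the rest of your argument goes through unchanged and matches the paper's Lemma~\ref{projection}.
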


  As a corollary of the proof, we also obtain a formula for the elementary divisors of the matrix of the map $\LL_V$, which can be seen as a refinement of the statement of the $\delta(V)$-conjecture. For $i\in\ZZ$, define $\ell_i =\frac{\log(1 + X)}{\log_p(\chi(\gamma))}-i$.

  \begin{lettertheorem}[Theorem \ref{theorem:LVelem}]
  \label{lettertheorem:ellV-elem}
   The elementary divisors of the $\calH(\Gamma)$-module quotient
   \[ \frac{\calH(\Gamma) \otimes_{\Qp} \Dcris(V)}{\calH(\Gamma) \otimes_{\Lambda_{\Qp}(\Gamma)}\image(\mathcal{L}_V)} \]
   are $[\lambda_{r_1}; \lambda_{r_2}; \dots; \lambda_{r_d}]$, where $\lambda_k = \ell_0 \ell_1 \dots \ell_{k-1}$.
  \end{lettertheorem}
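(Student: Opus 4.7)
The plan is to decompose $\calL_V$ via the commutative diagram in the introduction and assemble the elementary divisors of its cokernel from the two cokernels identified by Theorems~\ref{lettertheorem:HGelem} and~\ref{lettertheorem:exactsequence}. The diagram factors $\calL_V$, up to the $\Lambda_{\Qp}(\Gamma)$-isomorphism $h^1_{\Iw,V}$, as the composition of $1-\vp : \NN(V)^{\psi=1}\to(\vp^*\Nrig(V))^{\psi=0}$, the inclusion $\iota : (\vp^*\Nrig(V))^{\psi=0}\hookrightarrow\Dcris(V)\otimes_{\Qp}(\Brig)^{\psi=0}$, and the inverse Mellin transform $1\otimes\mathfrak{M}^{-1}$. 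After extending scalars to $\calH(\Gamma)$, the quotient in the statement corresponds via Mellin to the cokernel of $\iota\circ(1-\vp)$, and its invariant factors can be read off from those of $\iota$ and those of $1-\vp$.

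Fix a character $\eta$ of $\Delta$ and order the Hodge--Tate weights $r_1\leq\cdots\leq r_d$. By Theorem~\ref{lettertheorem:exactsequence}, the cokernel of $1-\vp$ on the $\eta$-isotypic component is $\bigoplus_{i=0}^{r_d-1}(\Dcris(V)/V_{i,\eta})(\chi^i\chi_0^{-i}\eta)$. Each summand is a $\Qp$-vector space of dimension $n_i:=|\{j:r_j>i\}|$ on which $\gamma$ acts by the scalar $u^i$, so as an $\calH(\Gamma_1)$-module it is $(\calH(\Gamma_1)/(X+1-u^i))^{n_i}$. Since the $(X+1-u^i)$ are pairwise coprime primes of $\calH(\Gamma_1)$, a routine Smith normal form calculation (using that $n_i\geq d-k+1$ iff $i<r_k$) gives invariant factors
\[ f_k=\prod_{i=0}^{r_k-1}(X+1-u^i),\qquad k=1,\dots,d. \]

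To combine with Theorem~\ref{lettertheorem:HGelem}, I note that the annihilator of the cokernel of $1-\vp$ is supported at the points $X=u^i-1$ for $0\leq i<r_d$, whereas $\mathfrak{n}_k$ does not vanish at any such point (the apparent zero of $\log(u^{1-j}(1+X))$ at $X=u^{j-1}-1$ is cancelled by the denominator, yielding $1/u^{j-1}$ by L'H\^opital, and all other factors are manifestly nonzero there). Hence the annihilators of the two torsion modules in the short exact sequence relating the cokernels of $1-\vp$, of $\iota\circ(1-\vp)$, and of $\iota$ are coprime in $\calH(\Gamma_1)$; the sequence therefore splits, the middle cokernel being the direct sum of the outer two, and by the Chinese remainder theorem its invariant factors are $\mathfrak{n}_{r_i}\cdot f_i$ for $i=1,\dots,d$.

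It remains to identify $\mathfrak{n}_{r_i}\cdot f_i$ with $\lambda_{r_i}$. Using $\log(u^{1-j}(1+X))=\log_p(u)\cdot\ell_{j-1}$, a direct calculation gives
\[ \mathfrak{n}_k\cdot\prod_{j=0}^{k-1}(X+1-u^j)=(\log_p u)^k\cdot\lambda_k, \]
and since $\log_p u\in\Qp^\times\subset\calH(\Gamma_1)^\times$, these are associates, which concludes. The main subtle point in the argument is the coprimality-and-splitting step, since invariant factors do not multiply along arbitrary compositions of maps; this is circumvented here by the observation that the two relevant torsion modules are supported on disjoint sets of primes of $\calH(\Gamma_1)$.
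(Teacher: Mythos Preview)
Your proof is correct and follows essentially the same route as the paper's. The paper's argument is terser: it observes that in matrix form $\calL_V$ factors as $\underline{M}\cdot S$, where $\underline{M}$ has elementary divisors $\mathfrak{n}_{r_1},\dots,\mathfrak{n}_{r_d}$ (Theorem~\ref{lettertheorem:HGelem}) and $S$ is a basis matrix for the image of $\uCol^\eta$; since these two sets of divisors are coprime, the elementary divisors of the product are the termwise products $\mathfrak{n}_{r_k}f_k\sim\lambda_{r_k}$. Your version unpacks the same factorisation at the level of cokernels, explicitly computes the invariant factors $f_k$ of $\operatorname{coker}(1-\vp)$ from Theorem~\ref{lettertheorem:exactsequence}, and justifies the splitting of the short exact sequence via coprimality of annihilators---details the paper leaves implicit.
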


  Suppose now that $V=V_f(k-1)$, where $f=\sum a_n q^n$ is a modular form of weight $k$ and $V_f$ is the $2$-dimensional $p$-adic representation associated to $f$. In this case, we can refine the above results to study the integral structure of the Coleman maps. Let $T_f$ be a $\calG_{\Qp}$-stable lattice in $V_f$, and let us assume that the $\BB^+_{\Qp}$-basis of $\NN(V_f)$ used to define the Coleman maps is in fact an $\AA^+_{\Qp}$-basis of $\NN(T_f)$.

  \begin{lettertheorem}[Theorem \ref{theorem:integral-images}]
  \label{lettertheorem:integral-images}
   For $i=1,2$ and for each character $\eta$ of $\Delta$, the image of $H^1_{\Iw}(\Qp, T_f)^\eta$ under $\uCol^\eta_i$ is a submodule of finite index of the module 
   \[
    \left(\prod_{j\in I_i^\eta}(X-u^j+1)\right)\Lambda(\Gamma_1)
   \]
   for some subset $I_i^\eta\subset\{0,\ldots,k-2\}$. Moreover, for each $\eta$ the sets $I_1^\eta$ and $I_2^\eta$ are disjoint.
  \end{lettertheorem}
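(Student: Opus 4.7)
The plan is to combine Theorem \ref{lettertheorem:images} on the $\Qp$-shape of the Coleman images with the integrality coming from the chosen lattice basis, and then to deduce the disjointness assertion by a determinant comparison via Theorems \ref{lettertheorem:HGelem} and \ref{lettertheorem:ellV-elem}.

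I would first apply Theorem \ref{lettertheorem:images} to $V = V_f(k-1)$, whose Hodge-Tate weights are $0$ and $k-1$. This yields subsets $I_i^\eta \subseteq \{0, \ldots, k-2\}$ and polynomials $P_i := \prod_{j \in I_i^\eta}(X - u^j + 1)$ satisfying $\image(\uCol_i^\eta) \otimes_{\Zp} \Qp = P_i \Lambda_{\Qp}(\Gamma_1)$; this identifies the sets $I_i^\eta$ of the statement. For the finite-index assertion, the hypothesis that our basis of $\NN(V_f)$ lifts to an $\AA_{\Qp}^+$-basis of $\NN(T_f)$ forces $\uCol_i^\eta$ to carry $H^1_{\Iw}(\Qp, T_f)^\eta$ into $\Lambda(\Gamma_1)$: integral classes correspond under $h^1_{\Iw,V}$ to elements of $\NN(T_f)^{\psi=1, \eta}$ whose coordinates under $1-\vp$ already lie in an integral $\psi=0$ submodule. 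Each $P_i$ is a product of distinguished linear polynomials $X - u^j + 1$ (note $u^j - 1 \in p\Zp$), hence distinguished itself, and Weierstrass division gives $P_i \Lambda_{\Qp}(\Gamma_1) \cap \Lambda(\Gamma_1) = P_i \Lambda(\Gamma_1)$. The integral image is therefore a $\Lambda(\Gamma_1)$-submodule of $P_i \Lambda(\Gamma_1)$ which becomes equal after inverting $p$, so is of finite index.

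For the disjointness I would pass to $\calH(\Gamma_1)$-coefficients and compare determinants. Unwinding the commutative diagram of the introduction, $\calL_V^\eta$ factors as $(1 - \vp) \circ (h^1_{\Iw,V})^{-1}$ followed by the inclusion $(\vp^* \Nrig(V))^{\psi=0,\eta} \hookrightarrow \Dcris(V) \otimes (\Brig)^{\psi=0, \eta}$ composed with Mellin. In the fixed basis of $\NN(V)$ the $\calH(\Gamma_1)$-span of the image of $(\uCol_1^\eta, \uCol_2^\eta)$ is identified with $P_1 \calH \oplus P_2 \calH \subseteq \calH(\Gamma_1)^2$, while Theorem \ref{lettertheorem:HGelem} tells us that the inclusion step has cokernel with elementary divisors $[1, \mathfrak{n}_{k-1}]$. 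Composing, the $\calH(\Gamma_1)$-span of $\image(\calL_V^\eta)$ has cokernel in $\Dcris(V) \otimes \calH(\Gamma_1) e_\eta$ of total determinant $\mathfrak{n}_{k-1} \cdot P_1 P_2$, up to $\calH(\Gamma_1)^\times$. By Theorem \ref{lettertheorem:ellV-elem} this must equal $\lambda_{k-1}$; and the relation noted in the introduction between $\mathfrak{n}_k$ and Perrin-Riou's generators, together with the identity $\log(u^{-j}(1+X)) = \log_p(u) \cdot \ell_j$, gives $\lambda_{k-1} = \mathfrak{n}_{k-1} \cdot \prod_{j=0}^{k-2}(X - u^j + 1)$ up to units. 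Therefore $P_1 P_2$ is associate in $\calH(\Gamma_1)$ to $\prod_{j=0}^{k-2}(X - u^j + 1)$. Since the factors on the right are pairwise non-associate and each appears with multiplicity one, $I_1^\eta$ and $I_2^\eta$ must be disjoint.

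The main obstacle I anticipate is the identification, used in the previous paragraph, of the $\calH(\Gamma_1)$-span of $\image((1-\vp) \circ (h^1_{\Iw,V})^{-1})$ with $P_1 \calH \oplus P_2 \calH$ under the basis of $\NN(V)$. This requires unwinding the construction of the $\uCol_i$ from \cite{leiloefflerzerbes10} to confirm that the coordinate functionals in the basis $\{n_1, n_2\}$ really do agree with the Coleman maps tensored up to $\calH(\Gamma_1)$, with no extra factors entering from the passage between $(\vp^* \NN)^{\psi=0}$ and $(\vp^* \Nrig)^{\psi=0}$. Once that compatibility is in hand, the elementary divisor comparison reduces to a direct verification.
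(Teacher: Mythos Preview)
Your argument has two genuine gaps, one in each half.

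\textbf{Finite index.} The inference ``becomes equal after inverting $p$, so is of finite index'' is not valid for submodules of $\Lambda_{\calO_E}(\Gamma_1)$. For example, the ideal $(\varpi_E)$ satisfies $(\varpi_E)\otimes\Qp = \Lambda_E(\Gamma_1)$, yet $\Lambda_{\calO_E}(\Gamma_1)/(\varpi_E)\cong (\calO_E/\varpi_E)[[X]]$ is infinite. What is missing is a \emph{lower bound} for the integral image: one needs an explicit distinguished polynomial in the image, not just the Weierstrass upper bound. The paper obtains this by citing \cite[proof of Proposition~4.11]{leiloefflerzerbes10}, which gives
\[
\big(\vp^{k-1}(\pi)\,\vp^*\NN(T_{\bar f}(k-1))\big)^{\psi=0}\subset(1-\vp)\,\NN(T_{\bar f}(k-1))^{\psi=1},
\]
and hence that $X_k:=\prod_{j=0}^{k-2}(X-u^j+1)$ lies in the integral image of each $\uCol_i^\eta$. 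This sandwiches the image between $X_k\Lambda_{\calO_E}(\Gamma_1)$ and $P_i\Lambda_{\calO_E}(\Gamma_1)$, and since $P_i\Lambda_{\calO_E}(\Gamma_1)/X_k\Lambda_{\calO_E}(\Gamma_1)$ is $\calO_E$-free of finite rank, the finite-index claim follows. Your argument supplies only the upper bound.

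\textbf{Disjointness.} Your identification of the $\calH(\Gamma_1)$-span of $\image(\uCol_1^\eta,\uCol_2^\eta)$ with $P_1\calH\oplus P_2\calH$ is incorrect: the joint image $S$ is described by the evaluation conditions of Proposition~\ref{general} (with each $V_j$ a line in $E^2$), and in general there are ``diagonal'' constraints $F(u^j-1)=r_jG(u^j-1)$ for $j$ in neither $I_1^\eta$ nor $I_2^\eta$. Consequently $\det(S)=\prod_{j=0}^{k-2}(X-u^j+1)$, not $P_1P_2$, so the determinant comparison you propose recovers only the known identity $\det(\underline M)\det(S)=\det(\calL_V)$ and says nothing about $I_1^\eta\cap I_2^\eta$. (Indeed, if your argument worked it would also force $I_1^\eta\cup I_2^\eta=\{0,\dots,k-2\}$, which is false in general; see Corollary~\ref{niceform} and \S\ref{surjectivebasechange}.) The paper's route is much shorter: since $\dim_E\Fil^{-j}\Dcris(V)=1$ for $0\le j\le k-2$, each $V_j$ is a line in $E^2$, and Lemma~\ref{projection} gives $I_1^\eta=\{j:V_j=0\oplus E\}$, $I_2^\eta=\{j:V_j=E\oplus 0\}$, which are disjoint by inspection.
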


  This theorem generalises \cite[Proposition 1.2]{kuriharapollack07}, which determines the images of $\big(\uCol^\Delta_1,\uCol^\Delta_2\big)$ for elliptic curves with $a_p = 0$. As a consequence of Theorem~\ref{lettertheorem:integral-images}, we can rewrite Conjecture~\ref{1stconj} as below, without making any technical assumptions.

  \begin{lettertheorem}
  \label{lettertheorem:1stconj2}
   For $i=1,2$, Conjecture~\ref{1stconj} is equivalent to the assertion that for each $\eta$ the characteristic ideal $\Char_{\Lambda_{\Qp}(\Gamma_1)}\Big(\Qp\otimes_{\Zp}\Sel_p^i(f/\QQ_\infty)^{\eta,\vee}\Big)$ is generated by $L_{p,i}^\eta/\prod_{j\in I_i^\eta}(X-u^j+1)$ where $I_i^\eta$ is as given by Theorem~\ref{lettertheorem:integral-images}.
  \end{lettertheorem}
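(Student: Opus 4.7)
The plan is to deduce Theorem~\ref{lettertheorem:1stconj2} directly from Theorem~\ref{lettertheorem:integral-images} by making the right-hand side of Conjecture~\ref{1stconj} fully explicit. Write $\pi_i^\eta := \prod_{j \in I_i^\eta}(X - u^j + 1)$ and $J_i^\eta := \pi_i^\eta \Lambda(\Gamma_1)$. By Theorem~\ref{lettertheorem:integral-images}, the image of $H^1_{\Iw}(\Qp, T_f)^\eta$ under $\uCol_i^\eta$ sits inside $J_i^\eta$ with finite index. Because characteristic ideals over $\Lambda_{\Qp}(\Gamma_1)$ are insensitive to $p$-power torsion subquotients, inverting $p$ converts this finite-index containment into an equality inside $\Lambda_{\Qp}(\Gamma_1)$, and I obtain $\image(\uCol_i^\eta) = \pi_i^\eta \Lambda_{\Qp}(\Gamma_1)$.

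Next I would observe that, since $L_{p,i}^\eta$ is by construction the image of the $\eta$-part of Kato's zeta element under $\uCol_i^\eta$, it lies in $\image(\uCol_i^\eta) = \pi_i^\eta \Lambda_{\Qp}(\Gamma_1)$, so the ratio $L_{p,i}^\eta / \pi_i^\eta$ is a genuine element of $\Lambda_{\Qp}(\Gamma_1)$. Division by $\pi_i^\eta$ then gives an isomorphism of $\Lambda_{\Qp}(\Gamma_1)$-modules
\[ \image(\uCol_i^\eta) / L_{p,i}^\eta \Lambda_{\Qp}(\Gamma_1) \;\cong\; \Lambda_{\Qp}(\Gamma_1) / \bigl(L_{p,i}^\eta / \pi_i^\eta\bigr), \]
whose $\Lambda_{\Qp}(\Gamma_1)$-characteristic ideal is visibly generated by $L_{p,i}^\eta/\pi_i^\eta$.

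Substituting this identity into the statement of Conjecture~\ref{1stconj} produces exactly the reformulation asserted in Theorem~\ref{lettertheorem:1stconj2}, in both directions of the equivalence. The only point meriting any care is the divisibility $\pi_i^\eta \mid L_{p,i}^\eta$ in $\Lambda_{\Qp}(\Gamma_1)$, but this is immediate from the containment provided by Theorem~\ref{lettertheorem:integral-images}. I therefore expect no real obstacle beyond the integral image computation already carried out for Theorem~\ref{lettertheorem:integral-images}; all that remains is the routine manipulation of characteristic ideals outlined above, and in particular the disappearance of the technical assumptions used in \cite{leiloefflerzerbes10} is purely a consequence of having removed them from Theorem~\ref{lettertheorem:integral-images}.
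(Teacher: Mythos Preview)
Your argument is correct and matches the paper's approach: the paper presents Theorem~\ref{lettertheorem:1stconj2} in the introduction as an immediate consequence of Theorem~\ref{lettertheorem:integral-images} and gives no separate proof in the body, so what you have written is exactly the intended deduction spelled out in detail. The only minor remark is that since Conjecture~\ref{1stconj} is already stated over $\Lambda_{\Qp}(\Gamma_1)$, you could equally well invoke the rational image description (Corollary~\ref{corollary:projectioncoleman} or the Proposition opening Section~\ref{imageMF}) directly rather than passing through the integral statement and then inverting $p$; either route gives $\image(\uCol_i^\eta)=\pi_i^\eta\Lambda_{\Qp}(\Gamma_1)$ and the rest is as you say.
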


  Finally, we explain in Section \ref{surjectivebasechange} how it is possible to choose a basis in such a way that $I_1^\eta=I_2^\eta=\varnothing$, i.e.~the modules $\Lambda(\Gamma_1)/\image(\uCol^\eta_i)$ are pseudo-null for both $i=1$ and $2$.

  \begin{remark}
   The local results in this paper (Theorems \ref{lettertheorem:HGelem}, \ref{lettertheorem:exactsequence}, \ref{lettertheorem:images} and \ref{lettertheorem:ellV-elem}) hold with representations of $\mathcal{G}_{\Qp}$ replaced by representations of $\mathcal{G}_{F}$ for an arbitrary finite unramified extension $F / \Qp$, with essentially the same proofs. We have chosen to work over $\Qp$ for the sake of simplicity, since this is all that is needed for applications to modular forms.
  \end{remark}


 \subsection{Setup and notation}

  \subsubsection{Fontaine rings}

   We review the definitions of the Fontaine rings we use in this paper. Details can be found in \cite{berger04} or \cite{leiloefflerzerbes10}.

   Throughout this paper, $p$ is an odd prime. If $K$ is a number field or a local field of characteristic 0, then $G_K$ denotes its absolute Galois group and $\calO_K$ the ring of integers of $K$. We write $\Gamma$ for the Galois group $\Gal(\QQ(\mu_{p^\infty}) / \QQ)$, which we identify with $\Zp^\times$ via the cyclotomic character $\chi$. Then $\Gamma \cong \Delta \times \Gamma_1$, where $\Delta$ is of order $p-1$ and $\Gamma_1\cong\Zp$. We fix a topological generator $\gamma$ of $\Gamma_1$.

   We write $\Brig$ for the ring of power series $f(\pi)\in\QQ_p[[\pi]]$ such that $f(X)$ converges everywhere on the open unit $p$-adic disc. Equip $\Brig$ with actions of $\Gamma$ and a Frobenius operator $\vp$ by $g.\pi=(\pi+1)^{\chi(g)}-1$ and $\vp(\pi)=(\pi+1)^p-1$. We can then define a left inverse $\psi$ of $\vp$ satisfying
   \[
    \vp\circ\psi(f(\pi))=\frac{1}{p}\sum_{\zeta^p=1}f(\zeta(1+\pi)-1).
   \]

   Inside $\Brig$, we have subrings $\AA_{\Qp}^+=\Zp[[\pi]]$ and $\BB_{\Qp}^+=\Qp\otimes_{\Zp}\AA_{\Qp}^+$. Moreover, the actions of $\vp$, $\psi$ and $\Gamma$ restrict to these rings. Finally, we write $t=\log(1+\pi)\in\Brig$ and $q=\vp(\pi)/\pi\in\AA_{\Qp}^+$. A formal power series calculation shows that $g(t) = \chi(g) t$ for $g \in \Gamma$ and $\vp(t) = pt$.

  \subsubsection{Iwasawa algebras and power series}

   Given a finite extension $K$ of $\Qp$, denote by $\Lambda_{\calO_K}(\Gamma)$ (respectively $\Lambda_{\calO_K}(\Gamma_1)$) the Iwasawa algebra $\ZZ_p[[\Gamma]]\otimes_{\ZZ_p}\calO_K$ (respectively $\ZZ_p[[\Gamma_1]]\otimes_{\ZZ_p}\calO_K$) over $\calO_K$. We further write $\Lambda_K(\Gamma)=\QQ\otimes\Lambda_{\calO_K}(\Gamma)$ and $\Lambda_{K}(\Gamma_1)=\QQ\otimes\Lambda_{\calO_K}(\Gamma_1)$. If $M$ is a finitely generated $\Lambda_{\calO_K}(\Gamma_1)$-torsion module, we write $\Char_{\Lambda_{\calO_K}(\Gamma_1)}(M)$ for its characteristic ideal. 

   Let
   \[ \mathcal{H}=\{f\in\QQ_p[\Delta][[X]]:\text{$f$ converges everywhere on the open unit $p$-adic disc}\},\]
   and define $\calH(\Gamma)$ to be the set of $f(\gamma-1)$ with $f(X)\in\calH$. We may identify $\Lambda_{\Qp}(\Gamma)$ with the subring of $\calH(\Gamma)$ consisting of power series with bounded coefficients. Note that $\calH(\Gamma)$ may be identified with the continuous dual of the space of locally analytic functions on $\Gamma$, with multiplication corresponding to convolution, implying that its definition is independent of the choice of generator $\gamma$.

   The action of $\Gamma$ on $\Brig$ gives an isomorphism of $\calH(\Gamma)$ with $(\Brig)^{\psi=0}$, the Mellin transform
   \begin{align*} 
    \mathfrak{M}: \calH(\Gamma) & \rightarrow (\Brig)^{\psi=0} \\
    f(\gamma-1) & \mapsto f(\gamma-1)(\pi+1). 
   \end{align*} 
   In particular, $\Lambda_{\Zp}(\Gamma)$ corresponds to $(\AA_{\Qp}^+)^{\psi=0}$ under $\mathfrak{M}$. Similarly, we define $\calH(\Gamma_1)$ as the subring of $\calH(\Gamma)$ defined by power series over $\Qp$, rather than $\Qp[\Delta]$. Then, $\calH(\Gamma_1)$ (respectively $\Lambda_{\Zp}(\Gamma_1)$) corresponds to $(1+\pi)\vp(\Brig)$ (respectively $(1+\pi)\vp(\AA_{\Qp}^+$)) under $\mathfrak{M}$.

   If $d$ is an integer and $S$ is a $\Lambda_K(\Gamma_1)$-submodule of $K\otimes_{\Qp}\calH(\Gamma_1)^{\oplus d}$ of rank $d$, we write $\det(S)$ for the determinant of any basis of $S$, which is well-defined up to multiplication by a a unit of $\Lambda_K(\Gamma_1)$. If $F$ is some $\Lambda_K(\Gamma_1)$-homomorphism with image, which is free of rank $d$ over $\Lambda_L(\Gamma_1)$, lying inside $K\otimes_{\Qp}\calH(\Gamma_1)^{\oplus d}$, we write $\det(F)$ for $\det(\image(F))$.

   For an integer $i$, define
   \[
    \left.
    \begin{aligned}
     \ell_i &=\frac{\log(1 + X)}{\log_p(\chi(\gamma))}-i \\ 
     \delta_i &= \frac{\ell_i}{X + 1 - \chi(\gamma)^i}
    \end{aligned}
    \right\} \in \calH(\Gamma_1).\\
   \]
   Note that $\ell_i$ is independent of the choice of generator $\gamma$ (hence the choice of normalising factor), but $\delta_i$ is not.

   \begin{remark}
    Note that for any positive integer $k$, we have
    \[
     \mathfrak{n}_k = 
     a_k \delta_{k-1} \dots \delta_0,
    \]
    where $a_k=\log(\chi(\gamma))^k \in \Zp$ is nonzero.
   \end{remark}

   The following result slightly refines \cite[Lemma II.2]{berger03}.

   \begin{proposition}\label{prop:annihilator}
    For any $k \ge 0$, we have
    \begin{align*}
     \mathfrak{M}( \ell_{k-1} \dots \ell_0 \calH(\Gamma) ) &= (t^k \Brig)^{\psi = 0}\\
     \mathfrak{M}( \delta_{k-1} \dots \delta_0 \calH(\Gamma) ) &= \left( \left(\tfrac{t}{\vp(\pi)}\right)^k \Brig\right)^{\psi = 0}.
    \end{align*} 
   \end{proposition}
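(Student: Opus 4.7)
We identify $\ell_{k-1}\cdots\ell_0 \in \calH(\Gamma)$ with the operator $t^k\partial^k$ on $\Brig$, where $\partial = (1+\pi)\frac{d}{d\pi}$, and deduce both equalities from this together with the projection formula $\psi(a\vp(b)) = \psi(a)b$. Since $\gamma\cdot(1+\pi)^c = (1+\pi)^{c\chi(\gamma)}$ and $\nabla := \ell_0$ is the normalised Lie algebra generator, one computes $\nabla\cdot(1+\pi)^c = ct(1+\pi)^c = t\partial\cdot(1+\pi)^c$, so $\nabla$ acts as the derivation $t\partial$ by continuity. Iterating the Leibniz identity $(\nabla - j)(t^j F) = t^{j+1}\partial F$ gives $\ell_{k-1}\cdots\ell_0 = t^k\partial^k$. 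Applying the projection formula to $t = \vp(t/p)$ and $(t/\vp(\pi))^k = \vp((t/p\pi)^k)$ yields $(t^k\Brig)^{\psi=0} = t^k(\Brig)^{\psi=0}$ and $((t/\vp(\pi))^k\Brig)^{\psi=0} = (t/\vp(\pi))^k(\Brig)^{\psi=0}$.

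For the first equality, it suffices to show $\partial^k$ is bijective on $(\Brig)^{\psi=0}$. The commutation $\psi\partial = p\partial\psi$ (verified on $\Brig = \bigoplus_{i=0}^{p-1}(1+\pi)^i\vp(\Brig)$ using $\partial\vp = p\vp\partial$) shows $\partial$ preserves $(\Brig)^{\psi=0}$. Since $\ker\partial|_{\Brig} = \Qp$ and $\psi$ acts as the identity on $\Qp$, the restriction is injective; for surjectivity we lift any $h \in (\Brig)^{\psi=0}$ to an antiderivative $\tilde h_0 \in \Brig$ by termwise integration and subtract the constant $\psi(\tilde h_0) \in \Qp$ to return to $(\Brig)^{\psi=0}$.

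For the second equality, the factorisation $\ell_i = (X+1-\chi(\gamma)^i)\delta_i$ together with the injectivity of $\prod_i(\gamma - \chi(\gamma)^i)$ on $(\Brig)^{\psi=0}$ (its kernel in $\Brig$ is $\bigoplus_{j<k}\Qp\cdot t^j$, which meets $(\Brig)^{\psi=0}$ trivially) shows that $\delta_{k-1}\cdots\delta_0\cdot(\Brig)^{\psi=0}$ equals $\{h \in (\Brig)^{\psi=0} : \prod_i(\gamma - \chi(\gamma)^i)(h) \in t^k(\Brig)^{\psi=0}\}$. The inclusion $(t/\vp(\pi))^k(\Brig)^{\psi=0} \subseteq$ this set follows from a local Cayley--Hamilton argument at each zero $\pi = \zeta - 1$ of $\vp(\pi)$ (so $\zeta \in \mu_p$): $\gamma$ fixes $\zeta - 1$ (because $\zeta^{\chi(\gamma)} = \zeta$ when $\chi(\gamma) \in 1 + p\Zp$) and acts on $\Brig/(\pi - (\zeta-1))^k\Brig$ as a lower-triangular matrix with diagonal eigenvalues $1, \chi(\gamma), \ldots, \chi(\gamma)^{k-1}$, so $\prod_i(\gamma - \chi(\gamma)^i)$ annihilates this quotient; applied to $(t/\vp(\pi))^k f$ this forces the image to lie in $t^k\Brig$.

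The main obstacle I anticipate is the reverse inclusion, which requires a finer local analysis at the zeros of $t$ outside $\vp(\pi)$'s zero set (i.e., at $\zeta - 1$ for $\zeta$ of order $\geq p^2$), where $\gamma$ permutes orbits non-trivially: one must show that divisibility of $\prod_i(\gamma - \chi(\gamma)^i)(h)$ by $t^k$ forces $h$ itself to vanish to order $\geq k$ at each such point, completing the identification with $(t/\vp(\pi))^k(\Brig)^{\psi=0}$.
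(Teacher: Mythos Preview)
Your treatment of the first equality is correct and matches the paper's argument: both identify $\ell_j$ with the differential operator $t\partial - j$ (where $\partial = (1+\pi)\tfrac{d}{d\pi}$), use the Leibniz relation $\ell_j(t^j f) = t^{j+1}\partial f$, and conclude by bijectivity of $\partial$ on $(\Brig)^{\psi=0}$. Your explicit surjectivity argument (termwise antiderivative, subtract the constant $\psi(\tilde h_0)$) supplies exactly the detail the paper suppresses when it simply invokes the duality with $f(x)\mapsto xf(x)$.

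For the second equality your proof is genuinely incomplete, as you yourself flag. You have correctly reduced to showing
\[
\bigl\{\,h\in(\Brig)^{\psi=0}:\textstyle\prod_{i}(\gamma-\chi(\gamma)^i)(h)\in t^k(\Brig)^{\psi=0}\,\bigr\}
\;=\;\bigl(\tfrac{t}{\vp(\pi)}\bigr)^{k}(\Brig)^{\psi=0},
\]
and your Cayley--Hamilton argument at the zeros of $\vp(\pi)$ gives the inclusion $\supseteq$. But the reverse inclusion is not merely a technicality: your proposed local analysis at $\zeta-1$ for $\zeta$ of order $p^n$ with $n\ge 2$ runs into the problem that $\gamma$ does not fix these points, so $\prod_i(\gamma-\chi(\gamma)^i)(h)$ evaluated at $\zeta-1$ is a linear combination of values of $h$ along the entire $\Gamma_1$-orbit, and the condition $\psi(h)=0$ must be used in an essential way. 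It is not clear this can be made to work, and you give no indication how.

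The paper avoids this local analysis entirely by a short global argument. One observes that multiplication by $(t/\vp(\pi))^k$ identifies $(\Brig/\vp(\pi)^k\Brig)^{\psi=0}$ with the quotient $\bigl((t/\vp(\pi))^k\Brig\bigr)^{\psi=0}\big/ (t^k\Brig)^{\psi=0}$, and then computes this $\calH(\Gamma)$-module via its filtration: since $t$ is a uniformiser of the ideal $(\vp(\pi))$, each graded piece $(\vp(\pi)^j\Brig/\vp(\pi)^{j+1}\Brig)^{\psi=0}$ is isomorphic to $\Qp[\Delta](j)$, so the annihilator of the whole module is \emph{exactly} $P := \prod_{i=0}^{k-1}(X+1-\chi(\gamma)^i)$ and the module is cyclic, isomorphic to $\calH(\Gamma)/P$. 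Since $P$ and $\delta_0\cdots\delta_{k-1}$ are coprime with product $\ell_0\cdots\ell_{k-1}$, comparison inside $(\Brig)^{\psi=0}/(t^k\Brig)^{\psi=0}$ forces the equality. In your language: you established only that the quotient is \emph{killed} by $P$; the paper shows it is \emph{free of rank one} over $\calH(\Gamma)/P$, which together with your inclusion $\supseteq$ finishes the proof.
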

  
   \begin{proof}
    One checks easily that $\ell_i$ acts on $\Brig$ as the differential operator $(1 + \pi) t \frac{\mathrm{d}}{\mathrm{d}\pi} - i$ and hence
    \[ \ell_j( t^j f ) = t^{j+1}(1 + \pi)\frac{\mathrm{d}f}{\mathrm{d}\pi}.\]
    Since $(1 + \pi)\frac{\mathrm{d}}{\mathrm{d}\pi}$ is an isomorphism on $(\Brig)^{\psi = 0}$ (it is the map on distributions dual to the map $f(x) \mapsto xf(x)$ on functions), it follows that each $\ell_j$ maps $(t^j \Brig)^{\psi = 0}$ bijectively onto $(t^{j+1} \Brig)^{\psi = 0}$. 

    To prove the corresponding statement for the $\delta_i$, we note that $(\Brig / \vp(\pi) \Brig)^{\psi = 0}$ is isomorphic to $\Qp[\Delta]$ as a $\Gamma$-module. Since $t$ is a uniformiser of the ideal $\vp(\pi)$, we have $(\vp(\pi)^j\Brig / \vp(\pi)^{j+1} \Brig)^{\psi = 0} = (t^j \Brig / t^j \vp(\pi) \Brig)^{\psi = 0} \cong \Qp[\Delta](j)$ as a $\Gamma$-module. Hence the annihilator of the $\calH(\Gamma)$-module
    \[
     (\Brig / \vp(\pi)^{k} \Brig)^{\psi = 0} \cong (t^k \Brig)^{\psi = 0} / \left( \left(\tfrac{t}{\vp(\pi)}\right)^k \Brig\right)^{\psi = 0}
    \]
    is $X (X + 1 - \chi(\gamma)) \dots (X + 1 - \chi(\gamma)^{k-1})$. This is coprime to $\delta_{0} \dots \delta_{k-1}$ and the product is $\ell_0 \dots \ell_{k-1}$, so the result follows.
   
   \end{proof}


  \subsubsection{Isotypical components}

   Let $\eta:\Delta\rightarrow\Zp^\times$ be a character. We write $e_\eta=(p-1)^{-1}\sum_{\sigma\in\Delta}\eta^{-1}(\sigma)\sigma$. If $M$ is a $\Lambda_E(\Gamma)$-module, its $\eta$-isotypical component is given by $M^\eta=e_\eta\Lambda_E(\Gamma)$. When $\eta=1$, we write $M^\Delta$ in place of $M^\eta$.

   We identify $\Lambda_E(\Gamma_1)$ with the power series in $X=\gamma-1$ with bounded coefficients in $E$. Given
   \[
    F=\sum_{\sigma\in\Delta,n\ge0}a_{\sigma,n}\sigma(\gamma-1)^n\in\Lambda(\Gamma),\]
   we write $F^\eta=e_\eta F$ for its image in $\Lambda_E(\Gamma)^\eta$. In particular,
   \[
    F^\eta=e_{\eta}\sum_{n\ge0}\left(\sum_{\sigma\in\Delta}a_{\sigma,n}\eta(\sigma)\right)(\gamma-1)^n\in e_\eta\Lambda_E(\Gamma_1).
   \]
   Therefore, we can identify $F^\eta$ with a power series in $X=\gamma-1$. Under this identification, the value $F^\eta|_{X=\chi(\gamma)^j-1}$ is given by $\chi^j\chi_0^{-j}\eta(F)$ where $\chi_0=\chi|_{\Delta}$ for all $j\in\ZZ$.


  \subsubsection{Crystalline representations}\label{sect:crystallinereps}

   Let $E$ and $F$ be finite extensions of $\Qp$. Let $V$ be a crystalline $E$-linear representation of $G_{\Qp}$.  We denote the Dieudonn\'{e} module of $V$ by $\Dcris(V)$. If $j\in\ZZ$, $\Fil^j\Dcris(V)$ denotes the $j$th step in the de Rham filtration of $\Dcris(V)$. We say $V$ is \emph{positive} if $\Dcris(V) = \Fil^0 \Dcris(V)$ (following the standard, but unfortunate, convention that positive representations are precisely those with non-positive Hodge-Tate weights).

   The $(\vp,\Gamma)$-module of $V$ is denoted by $\DD(V)$. As shown by Fontaine (unpublished -- for a reference see~\cite[Section II]{cherbonniercolmez99}), we have a canonical isomorphism of $\Lambda_E(\Gamma)$-modules
   \[
     h^1_{\Iw,V}: \DD(V)^{\psi=1} \rightarrow \HIw(\Qp,V).
   \]
   We write $\exp_{F,V}:F\otimes\Dcris(V)\rightarrow H^1(F,V)$ for Bloch-Kato's exponential over $F$.

   For an integer $j$, $V(j)$ denotes the $j$th Tate twist of $V$, i.e. $V(j)=V\otimes E e_j$ where $G_{\Qp}$ acts on $e_j$ via $\chi^j$. We have
   \[
    \Dcris(V(j))=t^{-j}\Dcris(V)\otimes e_j.
   \]
   For any $v\in\Dcris(V)$, $v_j=t^{-j}v \otimes e_j$ denotes its image in $\Dcris(V(j))$. 

   If $h\ge1$ is an integer such that $\Fil^{-h}\Dcris(V)=\Dcris(V)$, we write $\Omega_{V,h}$ for the Perrin-Riou exponential as defined in \cite{perrinriou94}.

   Let $T$ be an $\calO_E$-lattice which is stable under $G_{\Qp}$. We denote the Wach module of $V$ (respectively $T$)  by $\NN(V)$ (respectively $\NN(T)$), a free module of rank $d$ over $\BB_{\Qp}^+$ (respectively $\AA_{\Qp}^+$). Recall that $\Gamma$ acts on both of these objects, and there is a map $\vp: \NN(T)[\pi^{-1}] \to \NN(T)[\vp(\pi)^{-1}]$, preserving $\NN(T)$ if $T$ is positive (and similarly for $V$).

   For any $j \in \ZZ$ we can identify $\NN(T(j))$ with $\pi^{-j} \NN(T) \otimes e_j$, where $e_j$ is a basis of $\Qp(j)$. Given a $R$-module $M$ with an action of $\vp$ and a submodule $N$, $\vp^* N$ denotes the $R$-submodule of $M$ generated by $\vp(N)$, e.g.~$\vp^*\NN(T)$ denotes the $\AA_{\Qp}^+$-submodule of $\NN(T)[\pi^{-1}]$ generated by $\vp(\NN(T))$. Finally, we write $\Nrig(V)=\NN(V)\otimes_{\BB_{\Qp}^+}\Brig$. 

   The following lemma is implicit in the calculations of \cite[\S 3]{leiloefflerzerbes10}, but for the convenience of the reader we give a separate proof:

   \begin{lemma}\label{lemma:vpstar}
    If the Hodge-Tate weights of $V$ are $\ge 0$, then we have
    \[ \NN(T) \subseteq \vp^* \NN(T)\]
    and similarly for $\NN(V)$.
   \end{lemma}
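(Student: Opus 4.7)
The plan is to reduce to the well-understood case of positive crystalline representations by applying a Tate twist. Let $h$ be the largest Hodge-Tate weight of $V$; by hypothesis $h \ge 0$. Setting $V' = V(-h)$ and $T' = T(-h)$, the twisted representation $V'$ has all Hodge-Tate weights in $[-h, 0]$, so it is positive in the sense of Section~\ref{sect:crystallinereps} and in particular $\vp$ preserves $\NN(T')$.

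The key input from the positive case is the integral statement
\[
q^h\, \NN(T') \subseteq \vp^*\NN(T'),
\]
equivalently that the cokernel $\NN(T')/\vp^*\NN(T')$ is annihilated by $q^h$. This is a standard property of Wach modules in the positive case (cf.\ \cite{berger04}) and is precisely the content of the relevant calculations in \S 3 of \cite{leiloefflerzerbes10}, which is the reference the authors have in mind.

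To transfer the statement back to $V$, I would use the twist identification $\NN(T') = \pi^h \NN(T) \otimes e_{-h}$ recalled in Section~\ref{sect:crystallinereps} together with the relation $\vp(\pi) = \pi q$. A direct computation gives
\[
\vp(\pi^h n \otimes e_{-h}) = \pi^h q^h\, \vp(n) \otimes e_{-h} \qquad \text{for all } n \in \NN(T),
\]
and taking $\AA_{\Qp}^+$-spans yields $\vp^*\NN(T') = \pi^h q^h \cdot \vp^*\NN(T) \otimes e_{-h}$. Substituting both sides of the displayed inclusion above and cancelling the common non-zero-divisor $\pi^h q^h$ (legitimate since the ambient module is torsion-free over the integral domain $\AA_{\Qp}^+$) produces exactly $\NN(T) \subseteq \vp^*\NN(T)$. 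The analogous assertion for $\NN(V)$ follows by tensoring with $\Qp$.

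The only genuine obstacle is the cited annihilation result for the positive case; once that is in hand, the remainder is essentially bookkeeping with the Tate twist and the $\vp$-equivariance of the identification $\NN(T(-h)) \cong \pi^h \NN(T) \otimes e_{-h}$.
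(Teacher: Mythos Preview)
Your proposal is correct and follows essentially the same argument as the paper: twist by $-h$ (the paper uses a general $m$ with the Hodge-Tate weights in $[0,m]$) to reduce to the positive case, invoke the $q^h$-annihilation of $\NN(T')/\vp^*\NN(T')$ from \cite{berger04}, and then unwind the twist using $\vp(\pi)=\pi q$. The only cosmetic difference is that you track the basis vector $e_{-h}$ explicitly, whereas the paper suppresses it.
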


   \begin{proof}
    It suffices to prove the result for $T$. Suppose that the Hodge-Tate weights of $V$ are in $[0, m]$. Then $\NN(T) = \pi^{-m}\NN(T(-m))$. Since $T(-m)$ is positive, $\vp$ preserves $\NN(T(-m))$ and $\NN(T(-m)) / \vp^*\NN(T(-m))$ is killed by $q^m$ \cite[proof of Theorem III.3.1]{berger04}. Equivalently, we have
    \[ q^m \cdot \pi^m \NN(T) \subseteq \vp^*(\pi^m \NN(T)) = \vp(\pi)^m \vp^*\NN(T).\]
    Since $q = \vp(\pi) / \pi$, the result follows.
   \end{proof}


  \subsubsection{Modular forms}\label{sect:modularforms}

   Let $f=\sum a_nq^n$ be a normalised new eigenform of weight $k\ge2$, level $N$ and nebentypus $\varepsilon$. Write $F_f=\QQ(a_n:n\ge1)$ for its coefficient field. Let $\bar{f}=\sum\bar{a}_nq^n$ be the dual form to $f$, then $F_f=F_{\bar{f}}$. We assume that $p\nmid N$ and $a_p$ is not a $p$-adic unit, so $f$ is supersingular at $p$. 
   
   \begin{remark}
    We make this assumption in save ourselves from doing the same calculations twice in Section~\ref{imageMF}; they easily generalise to the ordinary case.
   \end{remark}

   We fix a prime of $F_f$ above $p$. We denote the completion of $F_f$ at this prime by $E$ and fix a uniformiser $\varpi_E$. We write $V_f$ for the 2-dimensional $E$-linear representation of $G_{\QQ}$ associated to $f$ from \cite{deligne69}. We fix an $\calO_E$-lattice $T_f$ stable under $G_{\QQ}$, it then determines a lattice $T_{\bar{f}}$ of $V_{\bar{f}}$. When restricted to $G_{\Qp}$, $V_f$ is crystalline and its de Rham filtration is given by
   \[
    \Fil^i\Dcris(V_f)=
    \left\{
    \begin{array}{ll}
     \Dcris(V_f)         & \text{if $i\le0$}\\
     E\omega                     & \text{if $1\le i\le k-1$}\\
     0                          & \text{if $i\ge k$}
    \end{array}\right.
   \]
   for some $0\ne\omega\in\Dcris(V_f)$. The action of $\vp$ on $\Dcris(V_f)$ satisfies $\vp^2-a_p\vp+\varepsilon(p)p^{k-1}=0$. We write $\nu_1,\nu_2$ for the `good basis' of $\Dcris(V_f)$ as chosen in \cite[\S 3.3]{leiloefflerzerbes10}.

   Unless otherwise stated, we always assume that the eigenvalues of $\vp$ on $\Dcris(V_f)$ are not integral powers of $p$ and the nebentypus of $f$ is trivial. Our assumption on the eigenvalues of $\vp$ allows us to define the Perrin-Riou pairing
   \[
    \LL_{i}=\LL_{1,(1+\pi)\otimes\nu_{i,1}}:H^1_{\Iw}(\Qp,V_{\bar{f}}(k-1))\rightarrow\calH(\Gamma)
   \]
   for $i=1,2$ where we have identified $V_f(1)^*(1)$ with $V_{\bar{f}}(k-1)$ (see \cite[Section~3.2]{lei09} or \cite[Section~3.3]{leiloefflerzerbes10} for details).


  \subsubsection{Elementary divisors}

   Let $R$ be a commutative ring with $1$. We say that $R$ admits the theory of elementary divisors if the following holds. Let $M\le N$ be finitely generated $R$-modules such that $N\cong R^d$, then there exists a $R$-basis $n_1,\ldots,n_d$ of and $r_1,\ldots,r_d\in R$ (unique up to a unit of $R$) such that $r_1 \mid \cdots \mid r_d$ and $r_1n_1,\ldots,r_en_e$, where $e$ is the largest integer such that $r_e\ne0$, form a $R$-basis of $M$. In this case, we write $[N:M]=[N:M]_R=[r_1;\cdots;r_d]$. When $d=1$, we simply write $[N:M]=r_1$.

   In particular, as explained in \cite{berger04}, $\Brig$ admits the theory of elementary divisors, as it is a Bezout ring (all finitely generated ideals are principal). The same can be said of $E\otimes_{\Qp}\Brig$ for any finite extension $E$ of $\Qp$, and of $\mathcal{H}(\Gamma_1)$ (which is isomorphic to $\Brig$ as an abstract ring).

   We will need the following lemma; see \cite[Lemma~III.7.6]{lang02}.

   \begin{lemma}
    Let $R$ be an elementary divisor ring, and suppose $M$ is a finitely-presented $R$-module and $N$ a submodule such that $N \cong R / a$ for some $a \in R$ and $aM = 0$. Then $M \cong N \oplus M/N$.
   \end{lemma}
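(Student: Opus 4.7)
My plan is to reduce the problem to the invariant factor decomposition of $M$. Since $R$ admits the theory of elementary divisors and $M$ is finitely presented, the structure theorem gives $M \cong \bigoplus_{i=1}^{d} R/(r_i)$ with $r_1 \mid r_2 \mid \cdots \mid r_d$, and $\mathrm{Ann}_R(M) = (r_d)$. From $aM = 0$ I conclude $r_d \mid a$, while $r_d N = 0$ together with $\mathrm{Ann}(N) = (a)$ forces $a \mid r_d$; hence $(a) = (r_d)$, and after adjusting by a unit I may assume $a = r_d$.

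I then aim to construct a retraction $\pi : M \to N$ of the inclusion, which immediately yields $M = N \oplus \ker\pi$ with $\ker\pi \cong M/N$. Writing $n = (n_1,\dots,n_d)$ for a generator of $N$ with lifts $\tilde n_i \in R$, and identifying $N \cong R/(a)$ via $n \leftrightarrow 1$, such a $\pi$ is specified componentwise by constants $c_i \in R/(a)$ satisfying $r_i c_i \equiv 0 \pmod{a}$ (well-definedness) and $\sum_i \tilde n_i c_i \equiv 1 \pmod{a}$ (the normalisation). Parametrising $c_i = (a/r_i)\, y_i$, the existence of $\pi$ reduces to solving the Bezout-type congruence $\sum_i \tilde n_i (a/r_i)\, y_i \equiv 1 \pmod{a}$, equivalently to showing that the ideal
\[
I := \bigl( \tilde n_1\, (a/r_1),\ \dots,\ \tilde n_d\, (a/r_d),\ a \bigr) \subseteq R
\]
equals $R$.

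The heart of the proof will be verifying $I = R$ from the hypothesis $\mathrm{Ann}(n) = (a)$. Setting $e_i = \gcd(\tilde n_i, r_i)$ (using that $R$ is Bezout), one computes $\mathrm{Ann}(n_i) = (r_i/e_i)$, so that $\mathrm{Ann}(n) = \bigcap_i (r_i/e_i) = \bigl(\mathrm{lcm}_i(r_i/e_i)\bigr)$, and the hypothesis becomes $\mathrm{lcm}_i(r_i/e_i) = a$ up to a unit. The dual identity $\gcd_i\bigl((a/r_i)\, e_i\bigr) = 1$ is then formally equivalent to $I = R$. The main obstacle is exactly this gcd/lcm duality in the generality of an arbitrary elementary divisor ring rather than a PID: over a PID it is immediate by a valuation-by-valuation check, and in general I would justify it by applying Smith normal form to the presentation matrix of $M$ augmented by the column representing $n$. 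Once $I = R$ is secured, any Bezout solution $\{y_i\}$ yields the desired retraction $\pi$, and the splitting $M \cong N \oplus M/N$ follows.
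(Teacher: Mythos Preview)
Your approach is correct (at least over a Bezout domain, which covers all the applications in the paper) but takes a genuinely different route from the paper's proof. The paper decomposes $M/N$ rather than $M$: writing $M/N \cong \bigoplus_i R/a_i$ via the structure theorem, it lifts each generator $q_i$ to some $p_i \in M$, observes that $a_i p_i = b_i p_0$ for a fixed generator $p_0$ of $N$, uses $aM = 0$ to deduce $a_i \mid b_i$, and then replaces $p_i$ by $p_i' = p_i - (b_i/a_i) p_0$; the $p_i'$ now satisfy $a_i p_i' = 0$, so their span is a complement to $N$. Your route---decomposing $M$, identifying $(a) = (r_d)$, and building a retraction $\pi: M \to N$ by solving a Bezout congruence---also works, and your concern about the gcd/lcm duality $\gcd_i(a/d_i) = a / \mathrm{lcm}_i(d_i)$ for $d_i \mid a$ is unfounded: this identity holds in any GCD domain (check it for two divisors from $\gcd(x,y)\,\mathrm{lcm}(x,y) = xy$ and induct), and an elementary divisor domain is Bezout, hence a GCD domain, so no Smith-normal-form detour on an augmented matrix is needed. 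The paper's argument is shorter and uses only bare divisibility and the cyclic structure of $N$; yours is more explicitly structural (it produces an actual retraction) and makes the role of the hypothesis $aM = 0$ very transparent through the equality $(a) = (r_d)$, at the cost of more arithmetic bookkeeping in $R$.
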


   \begin{proof}
    Let $q_1, \dots, q_r$ be a set of generators for $M/N$, with annihilators $a_i$, giving an isomorphism $M/N \cong \oplus_{i=1}^r R/a_i$. It is easy to check that each $a_i$ divides $a$. Let $p_i$ be an arbitrary lift of $q_i$; then $a_i p_i \in N$, so $a_i p_i = b_i p_0$ where $p_0$ is a generator of $N$ and $b_i \in R / aR$. Since $aM = 0$, we have $0 = (a / a_i) a_i p_i = (a/a_i) b_i p_0$. Thus $a_i \mid b_i$, and we may write $b_i = a_i c_i$. Thus $p_i' = p_i - c_i p_0$ is a lift of $p_i$ such that $a_i p_i' = a_i p_i - a_i c_i p_0 = a_i p_i = b_i p_0 = 0$. It follows that the subgroup generated by the $p_i'$ maps bijectively to $M/N$, giving the required splitting.
   \end{proof}

   A straightforward induction gives the following generalisation: 

   \begin{corollary}\label{corollary:elem-divisors}
    If $M$ is an $R$-module with a filtration by submodules $0 = M_0 \subseteq M_1 \subseteq \dots \subseteq M_d = M$, and there are elements $a_1, \dots, a_d \in R$ such that for each $i = 1, \dots,d$ we have $M_i / M_{i-1} \cong R/a_i$ and $a_i M \subseteq M_{i-1}$, then $M \cong \bigoplus_{i=1}^d  R/a_i$.
   \end{corollary}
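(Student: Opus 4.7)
The plan is to prove this by induction on $d$, iterating the preceding lemma to peel off one cyclic summand at a time. The base case $d = 1$ is immediate, since then $M = M_1 \cong R/a_1$.

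For the inductive step, I first observe that the hypothesis $a_1 M \subseteq M_0 = 0$ forces $a_1$ to annihilate $M$. Together with $M_1 \cong R/a_1$, this is exactly the setup of the preceding lemma (with $N = M_1$ and $a = a_1$), which supplies a splitting $M \cong M_1 \oplus (M/M_1)$. The quotient $M/M_1$ then carries the filtration $0 = M_1/M_1 \subseteq M_2/M_1 \subseteq \dots \subseteq M_d/M_1$ of length $d-1$, with successive quotients $(M_i/M_1)/(M_{i-1}/M_1) \cong M_i/M_{i-1} \cong R/a_i$ for $i = 2, \dots, d$; the annihilation condition $a_i(M/M_1) \subseteq M_{i-1}/M_1$ transfers directly from $a_i M \subseteq M_{i-1}$. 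Applying the induction hypothesis to $M/M_1$ gives $M/M_1 \cong \bigoplus_{i=2}^d R/a_i$, and combining with the splitting above yields the desired decomposition $M \cong \bigoplus_{i=1}^d R/a_i$.

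The one point requiring a brief check is that finite presentation is preserved at each step of the induction (since the preceding lemma assumes its module is finitely presented); this is automatic because $M_1$ is cyclic, so $M/M_1$ remains finitely presented whenever $M$ is. I do not anticipate any genuine obstacle here: all of the substantive content has already been built into the preceding lemma, and this corollary amounts to routine inductive bookkeeping.
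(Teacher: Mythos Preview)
Your proposal is correct and matches the paper's approach exactly: the paper gives no detailed proof, stating only that ``a straightforward induction gives the following generalisation,'' and your inductive argument peeling off $M_1$ via the preceding lemma is precisely that induction. The only very minor point is that you should note $M$ itself is finitely presented to begin with (since it is built up by successive extensions of the cyclic modules $R/a_i$), not just that finite presentation passes to $M/M_1$; but this is immediate from the filtration hypothesis.
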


   The ring $\calH(\Gamma)$ is not a domain; but it is equal to the direct sum of its subrings $e_\eta \calH(\Gamma)$, where $e_\eta$ is the idempotent in $\Qp[\Delta]$ corresponding to the character $\eta: \Delta \to \Qp^\times$ as above. Each of these subrings is isomorphic to $\calH(\Gamma_1)$, and hence admits a theory of elementary divisors.


 \section{Refinements of crystalline representations and \texorpdfstring{$\calH(\Gamma)$}{H(Gamma)}-structure}
 
  In this section, we will prove Theorem \ref{lettertheorem:HGelem}. We will do this by working with a certain filtration of the module $\Nrig(V)$, which is a $(\vp, \Gamma)$-module over $\Brig$; the steps in this filtration are $(\vp, \Gamma)$-modules over $\Brig$, but they are not necessarily of the form $\Nrig(W)$ for any representation $W$, so we begin by systematically developing a theory of such modules. Our approach is very much influenced by the description of the theory of $(\vp, \Gamma)$-modules over the Robba ring $\Bdag$ given in \cite[\S 2.2]{bellaichechenevier09}.


  \subsection{Some properties of \texorpdfstring{$(\vp, \Gamma)$}{(phi,Gamma)}-modules over \texorpdfstring{$\Brig$}{B+rig}}

   Let $\calN$ be a module over $\Brig$, free of rank $d$ and endowed with semilinear commuting actions of $\vp$ and $\Gamma$. We define
   \[ \Dcris(\calN) = \calN^\Gamma.\]
   We equip $\Dcris(\calN)$ with the filtration defined by 
   \[ \Fil^i \Dcris(\calN) = \{ v \in \Dcris(\calN) : \vp(v) \in q^j \calN\}\] 
   where $q = \vp(\pi)/\pi$ as above.

   Let $K_n = \Qp(\mu_{p^n})$ and $K_\infty = \bigcup_{n} K_n$. We define
   \[ \DdR^{(n)}(\calN) = \left(K_\infty \otimes_{K_n} K_n[[t]] \otimes_{\Brig} \calN\right)^\Gamma,\]
   where the tensor product is via the embedding $\Brig \into K_n[[t]]$ arising from the fact that 
\[K_n \cong \Brig / \vp^{n-1}(q)\]
 and $t$ is a uniformiser of the prime ideal $\vp^{n-1}(q)$. We endow $K_n[[t]]$ with the obvious semilinear action of $\Gamma$, for which this homomorphism is $\Gamma$-equivariant, and the $t$-adic filtration. Then $\DdR^{(n)}(\calN)$ is a filtered $\Qp$-vector space, of dimension $\le d$ (since $K_\infty((t))^\Gamma = \Qp$ \cite[\S 2.2.7]{bellaichechenevier09}); the operator $\vp$ gives an isomorphism of filtered $\Qp$-vector spaces $\DdR^{(n)}(\calN) \rTo^\cong \DdR^{(n+1)}(\calN)$ for each $n$, and an embedding of filtered $\Qp$-vector spaces $\Dcris(\calN) \into \DdR^{(1)}(\calN)$.

   We shall say that $\calN$ is \emph{crystalline} if $\dim_{\Qp} \Dcris(\calN) = d$, and \emph{de Rham} if $\dim_{\Qp} \DdR^{(n)}(\calN) = d$ (for some, and hence all, $n \ge 1$). If $\calN$ is de Rham, we define the \emph{Hodge-Tate weights} of $\calN$ to be the integers $r$ such that $\Fil^{-r} \DdR^{(n)}(\calN) \ne \Fil^{1-r} \DdR^{(n)}(\calN)$ (with multiplicities given by the size of the jump in dimension). Note that these are necessarily $\le 0$, which is unfortunate but necessary for compatibility with the usual definition in the case of Galois representations.

   Finally, we define $\DSen^{(n)}(\calN) = K_\infty \otimes_{K_n} \calN/\vp^{n-1}(q)\calN$. This is a $K_\infty$-vector space of dimension $d$, with a semilinear action of $\Gamma$. As above, the $\vp$ operator gives isomorphisms $\DSen^{(n)}(\calN) \to \DSen^{(n+1)}(\calN)$, of $K_\infty$-vector spaces with semilinear $\Gamma$-action. (So both $\DSen(\calN)$ and $\DdR(\calN)$ are independent of $n$ as abstract objects; we retain the $n$ in the notation when we are interested in the relation between these spaces and the original module $\calN$.)

   \begin{proposition}
    Let $j \ge 0$, and suppose $\calN$ is de Rham. Then there is an isomorphism of $\Qp$-vector spaces
    \[\Fil^j \DdR(\calN) / \Fil^{j+1} \DdR(\calN) \rTo^\cong \DSen(\calN)^{\Gamma = \chi^{-j}}.\]
   \end{proposition}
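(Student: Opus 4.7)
The plan is to obtain the map from the short exact sequence of $\Gamma$-modules
\[ 0 \to t^{j+1} M \to t^j M \to t^j M / t^{j+1} M \to 0, \]
where $M = K_\infty \otimes_{K_n} K_n[[t]] \otimes_{\Brig} \calN$ for any fixed $n \ge 1$. By definition of the $t$-adic filtration, $(t^j M)^\Gamma = \Fil^j \DdR^{(n)}(\calN)$. Since $g(t) = \chi(g) t$ for $g \in \Gamma$, multiplication by $t^j$ is a $\Gamma$-equivariant bijection $M/tM \to t^j M / t^{j+1} M$ that twists the $\Gamma$-action by $\chi^j$, so
\[ (t^j M / t^{j+1} M)^\Gamma = (M/tM)^{\Gamma = \chi^{-j}} = \DSen^{(n)}(\calN)^{\Gamma = \chi^{-j}}. \]
Taking $\Gamma$-invariants of the displayed sequence therefore produces a left-exact sequence
\[ 0 \to \Fil^{j+1} \DdR^{(n)}(\calN) \to \Fil^j \DdR^{(n)}(\calN) \to \DSen^{(n)}(\calN)^{\Gamma = \chi^{-j}}, \]
whose induced map on the quotient is the homomorphism claimed in the proposition and is automatically injective.

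To obtain surjectivity I would carry out a global dimension count over all $j \in \ZZ$. The injectivity just established yields
\[ \sum_{j \in \ZZ} \dim_{\Qp} \bigl( \Fil^j \DdR^{(n)}(\calN) / \Fil^{j+1} \DdR^{(n)}(\calN) \bigr) \;\le\; \sum_{j \in \ZZ} \dim_{\Qp} \DSen^{(n)}(\calN)^{\Gamma = \chi^{-j}}. \]
The left-hand side telescopes to $\dim_{\Qp} \DdR^{(n)}(\calN) = d$ by the de Rham hypothesis. For the right-hand side I would invoke the standard semilinear-algebra fact that in a $K_\infty$-semilinear $\Gamma$-representation, eigenvectors for pairwise distinct $K_\infty^\times$-valued characters of $\Gamma$ are $K_\infty$-linearly independent; applied with the characters $\chi^{-j}$, this bounds the right-hand side by $\dim_{K_\infty} \DSen^{(n)}(\calN) = d$. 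The two bounds sandwich $d$ to an equality, forcing every map $\Fil^j / \Fil^{j+1} \to \DSen^{(n)}(\calN)^{\Gamma = \chi^{-j}}$ to be simultaneously surjective and injective.

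The principal obstacle is the right-hand dimension bound: the left-exactness derived from the short exact sequence cannot by itself deliver an isomorphism, and it is the de Rham hypothesis combined with the semilinear-algebra input that pins down the total $\Qp$-dimensions on both sides so they can be sandwiched. Conceptually this is the ``graded piece of de Rham equals Sen eigenspace'' comparison familiar from the Hodge--Tate theory of Galois representations, transposed into the abstract $(\vp,\Gamma)$-module setup over $\Brig$; the restriction to $j \ge 0$ in the statement simply reflects the convention that the Hodge--Tate weights of $\calN$ are $\le 0$, so only the non-negative filtration steps carry nontrivial information, and the argument works without change for any $j \in \ZZ$.
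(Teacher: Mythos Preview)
Your proposal is correct and follows essentially the same strategy as the paper: construct an injection $\Fil^j/\Fil^{j+1}\hookrightarrow \DSen(\calN)^{\Gamma=\chi^{-j}}$ from the reduction map, then force surjectivity by a global dimension count that pins both sides to $d$. The only difference is in how the upper bound $\sum_j \dim_{\Qp}\DSen(\calN)^{\Gamma=\chi^{-j}}\le d$ is obtained: the paper packages this by sending $v\in\DSen(\calN)^{\Gamma=\chi^{-j}}$ to $t^j\otimes v\in\big(K_\infty((t))\otimes_{K_\infty}\DSen(\calN)\big)^\Gamma$ and invoking $K_\infty((t))^\Gamma=\Qp$, whereas you appeal directly to linear independence of semilinear eigenvectors. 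Note that your phrasing (``eigenvectors for pairwise distinct characters are $K_\infty$-linearly independent'') literally only gives that the sum over $j$ is direct; you also need that a $\Qp$-basis of a single eigenspace $\DSen(\calN)^{\Gamma=\chi^{-j}}$ stays $K_\infty$-linearly independent, which follows from $K_\infty^\Gamma=\Qp$ by the usual descent argument. The paper's $K_\infty((t))$ trick handles both points simultaneously, but the content is the same.
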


   \begin{proof}
    Let us fix an $n \ge 1$ and let $\theta$ be the reduction map $K_n[[t]] \to K_n$. Then $\theta$ induces a map
    \[ \DdR^{(n)}(\calN) \to \DSen^{(n)}(\calN)^\Gamma \]
    whose kernel is $\Fil^1 \DdR(\calN)$ and whose image is a $\Qp$-linear subspace $S_0 \subseteq \DSen(\calN)^\Gamma$. Similarly, we find that $\theta \circ t^{-j}$ gives an injection $\Fil^j \DdR(\calN) / \Fil^{j+1} \DdR(\calN) \to \DSen(\calN)^{\Gamma = \chi^{-j}}$, whose image is a $\Qp$-linear subspace $S_j$.

    Since $\bigoplus_{j=0}^\infty S_j$ has dimension $d$, it suffices to show that $\dim_{\Qp} \bigoplus_{j = 0}^\infty \DSen(\calN)^{\Gamma = \chi^{-j}} \le d$. This follows from the fact that it is a subspace of $\left(K_\infty((t)) \otimes_{K_\infty} \DSen(\calN)\right)^\Gamma$, and (as remarked above) $K_\infty((t)))$ is a field, with $K_\infty((t))^\Gamma = \Qp$ .
   \end{proof}

   \begin{corollary}\label{corollary:invariants}
    If $\calN$ is crystalline, then the map
    \[ \Dcris(\calN) = \calN^\Gamma \rTo^{\vp^{n}} (\calN / \vp^{n-1}(q)^r \calN)^\Gamma \]
    is surjective for all $r \ge 1$ and $n \ge 1$, with kernel $\Fil^r \Dcris(\calN)$.
   \end{corollary}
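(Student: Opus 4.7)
My plan is to reduce to the filtered isomorphism $\Dcris(\calN) \xrightarrow{\vp^n,\,\cong} \DdR^{(n)}(\calN)$ available for crystalline $\calN$. Write $W_r := K_\infty \otimes_{K_n} \calN/\vp^{n-1}(q)^r\calN$. There is a natural inclusion $x \mapsto 1 \otimes x$ giving $(\calN/\vp^{n-1}(q)^r\calN)^\Gamma \hookrightarrow W_r^\Gamma$, and by left-exactness of $\Gamma$-invariants applied to the short exact sequence
\[ 0 \to t^r\bigl(K_\infty \otimes_{K_n} K_n[[t]] \otimes_{\Brig} \calN\bigr) \to K_\infty \otimes_{K_n} K_n[[t]] \otimes_{\Brig} \calN \to W_r \to 0, \]
the reduction induces an injection $\DdR^{(n)}(\calN)/\Fil^r \hookrightarrow W_r^\Gamma$. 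The two copies of $\Dcris(\calN)$ map into this common target $W_r^\Gamma$: the first via $\vp^n$ followed by $(\calN/\vp^{n-1}(q)^r\calN)^\Gamma \hookrightarrow W_r^\Gamma$, and the second via the filtered iso $\vp^n\colon \Dcris(\calN) \xrightarrow{\cong} \DdR^{(n)}(\calN)$ followed by the quotient by $\Fil^r$ and the injection above. One checks directly that both paths send $v$ to $1\otimes \vp^n(v) \bmod t^r$, so they agree.

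Since the filtered iso $\vp^n\colon \Dcris(\calN) \xrightarrow{\cong} \DdR^{(n)}(\calN)$ identifies $\Fil^r\Dcris$ with $\Fil^r\DdR^{(n)}$ (this is what the paper's setup enshrines in the convention $\Fil^i \Dcris = \{v : \vp(v) \in q^i \calN\}$), the second composition has image equal to the injected copy of $\DdR^{(n)}(\calN)/\Fil^r$ inside $W_r^\Gamma$, with kernel exactly $\Fil^r \Dcris(\calN)$. Granting the equality $\DdR^{(n)}(\calN)/\Fil^r = W_r^\Gamma$, this image fills all of $W_r^\Gamma$, forcing $(\calN/\vp^{n-1}(q)^r\calN)^\Gamma = W_r^\Gamma$ as well, and the top map $\Dcris(\calN) \to (\calN/\vp^{n-1}(q)^r\calN)^\Gamma$ is then surjective with the same kernel $\Fil^r \Dcris(\calN)$, as claimed.

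It remains to prove $\DdR^{(n)}(\calN)/\Fil^r = W_r^\Gamma$. The injection is already in hand; for the reverse dimension inequality I filter $W_r$ by the submodules $t^i W_r$ for $i = 0,\dots,r$. Using $\gamma(t^i x) = \chi(\gamma)^i t^i \gamma(x)$, multiplication by $t^i$ identifies each graded piece $t^i W_r/t^{i+1} W_r$ with the Tate twist $\DSen^{(n)}(\calN)(i)$. Running the long exact sequences of $\Gamma$-cohomology along this filtration and invoking the preceding proposition to identify $\DSen^{(n)}(\calN)^{\Gamma = \chi^{-i}} \cong \Fil^i \DdR^{(n)}/\Fil^{i+1}$ yields
\[ \dim_{\Qp} W_r^\Gamma \leq \sum_{i=0}^{r-1} \dim_{\Qp} \DSen^{(n)}(\calN)^{\Gamma = \chi^{-i}} = \dim_{\Qp} \DdR^{(n)}(\calN)/\Fil^r, \]
which combined with the injection forces equality. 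The main obstacle is precisely this filtration argument: I must track the $\Gamma$-action on the graded pieces of $W_r$ carefully enough to identify them with the correct Tate twists of $\DSen^{(n)}(\calN)$, and verify that $t$ and $\vp^{n-1}(q)$ generate the same ideal in $K_n[[t]]$ (modulo units) so that the $t$-adic and $\vp^{n-1}(q)$-adic filtrations on $W_r$ coincide. Everything else is a diagram chase together with the paper's setup.
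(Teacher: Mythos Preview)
Your proposal is correct and follows essentially the same approach as the paper: both pass to the $K_\infty$-extended object (your $W_r$ coincides with the paper's $\calN^{(n)}/t^r\calN^{(n)}$, since $\Brig/\vp^{n-1}(q)^r \cong K_n[[t]]/t^r$), filter by powers of $t$, identify the $\Gamma$-invariants of the graded pieces with $\DSen^{(n)}(\calN)^{\Gamma=\chi^{-i}}$, and invoke the preceding proposition. The only cosmetic difference is that the paper argues surjectivity on each graded piece directly, whereas you package this as a dimension inequality together with the injection $\DdR^{(n)}/\Fil^r \hookrightarrow W_r^\Gamma$; your sandwich argument $(\calN/\vp^{n-1}(q)^r\calN)^\Gamma \subseteq W_r^\Gamma$ makes explicit a step the paper leaves implicit.
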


   \begin{proof}
    Let us define $\calN^{(n)} = K_\infty \otimes_{K_n} K_n[[t]] \otimes_{\Brig} \calN$, so $(\calN^{(n)})^\Gamma = \DdR(\calN)$. By hypothesis the map $\vp^n: \Dcris(\calN) \to \DdR^{(n)}(\calN)$ is an isomorphism of filtered vector spaces, and the filtration on $\DdR(\calN)$ is defined by the $t$-adic filtration of $\calN^{(n)}$, so it suffices to show that reduction modulo $t^r$ gives a surjection
    \[ (\calN^{(n)})^\Gamma \to (\calN^{(n)} / t^r \calN^{(n)})^\Gamma.\]

    We show that for each $j$, the map $(t^j \calN^{(n)})^\Gamma \to (t^j \calN^{(n)} / t^{j+1}\calN^{(n)})^\Gamma$ is surjective. 
    Multiplication by $t^{-j}$ gives an isomorphism $(t^j \calN^{(n)} / t^{j+1}\calN^{(n)})^\Gamma \to (\calN^{(n)} / t \calN^{(n)})^{\Gamma = \chi^{-j}}$; 
    but $\calN^{(n)} / t \calN^{(n)} = \DSen^{(n)}(\calN)$, and by the preceding proposition we know that $\theta \circ t^{-j}$ gives an 
    isomorphism from $\Fil^j \DdR(\calN) / \Fil^{j+1} \DdR(\calN)$ to $\DSen^{(n)}(\calN)^{\Gamma = \chi^{-j}}$. So the map $(\calN^{(n)})^\Gamma \to (\calN^{(n)} / t^r \calN^{(n)})^\Gamma$ is an injection of filtered modules for which the associated map of graded modules is surjective; thus it is itself surjective. 
   \end{proof}

   Let us write $\mathcal{M} = \Brig \otimes_{\Qp} \Dcris(\calN) \subseteq \calN$.

   \begin{proposition}\label{prop:elementary-divisors}
    If $\calN$ is crystalline and $\Gamma$ acts trivially on $\calN / \pi \calN$, then the elementary divisors of $\calN / \mathcal{M}$ are 
    \[ \left(\tfrac{t}{\pi}\right)^{s_1}, \dots, \left(\tfrac{t}{\pi}\right)^{s_d},\]
    where $-s_1 \ge \dots \ge -s_d$ are the Hodge-Tate weights of $\calN$.
   \end{proposition}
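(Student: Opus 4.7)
The proposition generalizes \cite[Proposition III.2.1]{berger04} from Wach modules of crystalline representations to abstract crystalline $(\vp,\Gamma)$-modules over $\Brig$, and I would follow an analogous strategy. Since $\Brig$ is a Bezout domain admitting a theory of elementary divisors, the structure of the torsion module $\calN/\mathcal{M}$ is determined by its localizations at the principal primes of $\Brig$, namely $\pi$ and the $\vp^n(q)$ for $n\ge 0$.

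Injectivity of $\mathcal{M}\into\calN$ is automatic: the crystallinity hypothesis yields the de Rham comparison $\calN[1/t]\cong\Brig[1/t]\otimes_{\Qp}\Dcris(\calN)$, so any $\Qp$-basis of $\Dcris(\calN)$ remains $\Brig$-linearly independent in $\calN$. The same identity shows that $\calN/\mathcal{M}$ is annihilated by a power of $t$, so its support lies in $\{\pi\}\cup\{\vp^n(q):n\ge 0\}$. At the prime $\pi$, both $\mathcal{M}/\pi\mathcal{M}=\Dcris(\calN)$ and $\calN/\pi\calN$ are $d$-dimensional $\Qp$-vector spaces. The hypothesis of trivial $\Gamma$-action on $\calN/\pi\calN$ makes the reduction map $\Dcris(\calN)\to\calN/\pi\calN$ well-defined, and I would prove its injectivity as follows: if $v\in\Dcris(\calN)\cap\pi\calN$, write $v=\pi v_1$; the relation $\gamma v=v$ together with the Taylor expansion $\gamma(\pi)/\pi\equiv\chi(\gamma)\bmod\pi$ gives $\gamma(v_1)\equiv\chi(\gamma)^{-1}v_1\bmod\pi\calN$, and triviality of the $\Gamma$-action on $\calN/\pi\calN$ then forces $v_1\in\pi\calN$. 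Iterating yields $v\in\bigcap_k\pi^k\calN=0$, so the map is an isomorphism and the prime $\pi$ contributes nothing to the elementary divisors.

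The main step is the local computation at each prime $\vp^{n-1}(q)$, $n\ge 1$. The completion of $\Brig$ there is the DVR $K_n[[t]]$, on which both $\calN$ and $\mathcal{M}$ become free of rank $d$. Crystallinity implies de Rham via the embedding $\vp^n:\Dcris(\calN)\hookrightarrow\DdR^{(n)}(\calN)$, which is then an isomorphism of filtered $\Qp$-vector spaces by dimension count. Applying Corollary~\ref{corollary:invariants} to identify $(\calN/\vp^{n-1}(q)^r\calN)^\Gamma$ with $\Dcris(\calN)/\Fil^r\Dcris(\calN)$, and using the proposition preceding Corollary~\ref{corollary:invariants} to match each $\Fil^j/\Fil^{j+1}$ with the Sen eigenspace of weight $-j$, one reads off that the $t$-adic graded pieces of $\calN_{\mathrm{loc}}/\mathcal{M}_{\mathrm{loc}}$ correspond to the Hodge-graded pieces of $\Dcris(\calN)$. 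The local invariant factors of $\mathcal{M}_{\mathrm{loc}}\into\calN_{\mathrm{loc}}$ then come out to $t^{s_1},\ldots,t^{s_d}$. Since $t/\pi$ has a simple zero at each $\vp^{n-1}(q)$, agreeing with $t$ up to a unit locally, the global elementary divisors are $(t/\pi)^{s_1},\ldots,(t/\pi)^{s_d}$.

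To promote this to the concrete direct-sum decomposition $\calN/\mathcal{M}\cong\bigoplus_i\Brig/(t/\pi)^{s_i}$, I would invoke Corollary~\ref{corollary:elem-divisors}, taking a basis of $\Dcris(\calN)$ adapted to the Hodge filtration (with $e_i\in\Fil^{s_i}\setminus\Fil^{s_i+1}$) and lifting it to a compatible filtration on $\calN/\mathcal{M}$ whose successive quotients are $\Brig/(t/\pi)^{s_i}$. The main obstacle is in the third paragraph: translating the graded-level identification from Corollary~\ref{corollary:invariants} into a statement about \emph{individual} local invariant factors of $\calN_{\mathrm{loc}}/\mathcal{M}_{\mathrm{loc}}$, rather than merely about its total $K_n[[t]]$-length.
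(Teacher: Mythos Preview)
The paper's own proof is simply the one-line citation ``This follows exactly as in \cite[Proposition III.2.1]{berger04}'', so you are in effect reconstructing Berger's argument, and your overall architecture (Bezout localization; ruling out $\pi$ via the trivial $\Gamma$-action; computing at the primes $\vp^{n-1}(q)$) is exactly right.

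Your self-identified obstacle, however, is misdiagnosed. Over the DVR $K_n[[t]]$ the sequence of dimensions $\dim_{K_n}\bigl(t^j Q/t^{j+1}Q\bigr)$, where $Q=(\calN/\mathcal{M})_{\mathrm{loc}}$, already determines the invariant factors completely: one has $\dim_{K_n}(t^jQ/t^{j+1}Q)=\#\{i:a_i>j\}$. So knowing the graded pieces is \emph{not} merely knowing the length. If paragraph~3 really established the graded correspondence you claim, you would be finished, and the appeal to Corollary~\ref{corollary:elem-divisors} in paragraph~4 is superfluous (and in any case the ``compatible filtration on $\calN/\mathcal{M}$'' you would need there is never specified).

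The genuine gap is one you pass over in paragraph~3. Corollary~\ref{corollary:invariants} computes $(\calN/\vp^{n-1}(q)^r\calN)^\Gamma$ via the map $\vp^n$, not via the direct inclusion $\Dcris\subset\calN$; yet it is the direct inclusion that defines $\mathcal{M}$ and hence the quotient whose elementary divisors you want. Two observations close this. First, since $\vp$ restricts to an automorphism of the $\Qp$-vector space $\Dcris(\calN)$, the images of $\Dcris$ in $\calN/q^r\calN$ under the direct inclusion and under $\vp$ coincide; hence the direct map $\Dcris\to(\calN/q^r\calN)^\Gamma$ is also surjective, with kernel $\Dcris\cap q^r\calN$ of $\Qp$-dimension equal to $\dim\Fil^r\Dcris$. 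Second, to pass from this $\Qp$-dimension to the $K_1$-dimension of the image of $\mathcal{M}$ one needs that $\Gamma$-invariants in a semilinear $K_\infty$-representation remain $K_\infty$-linearly independent (Galois descent), which gives $\dim_{K_1}\bigl(\mathcal{M}/(\mathcal{M}\cap q^r\calN)\bigr)=d-\dim\Fil^r\Dcris$ and hence the invariant factors. This descent step is what was missing from your sketch.
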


   \begin{proof} This follows exactly as in \cite[Proposition III.2.1]{berger04}. \end{proof}


  \subsection{Quotients of \texorpdfstring{$(\vp, \Gamma)$}{(phi,Gamma)}-modules}

   We now let $\calN$ be a $(\vp, \Gamma)$-module over $\Brig$, as above. We assume that $\calN$ is crystalline and $\Gamma$ acts trivially on $\calN / \pi \calN$, and investigate the properties of a certain class of $(\vp, \Gamma)$-modules obtained as quotients of $\calN$. We continue to write $\mathcal{M} = \Brig \otimes_{\Qp} \Dcris(\calN) \subseteq \calN$.

   Let $Y$ be a $\vp$-stable $E$-linear subspace of $\Dcris(\calN)$. We set 
   \[\mathcal{Y} = \Brig \otimes_{\Qp} Y \subseteq \mathcal{M}.\]
   and
   \[\mathcal{X} = \calN \cap \mathcal{Y}\left[\left(\tfrac{t}{\pi}\right)^{-1}\right] = \{ x \in \calN : \left(\tfrac{t}{\pi}\right)^m x \in \mathcal{Y} \text{ for some $m$}\} \subseteq \mathcal{N}.\]

   \begin{proposition}\label{prop:basic-properties} The spaces $Y, \mathcal{Y}, \mathcal{X}$ have the following properties:
    \begin{enumerate}
     \item[(a)] $\mathcal{X}$ is a $\Brig$-submodule of $\calN$ stable under $\vp$ and $\Gamma$;
     \item[(b)] $\mathcal{X} = \{ x \in \calN : a x \in \mathcal{Y}$ for some nonzero $a \in \Brig\}$ (the saturation of $\mathcal{Y}$);
     \item[(c)] $\mathcal{X}$ is free of rank $\dim_{\Qp} Y$ as an $\Brig$-module;
     \item[(d)] $Y = \mathcal{X} \cap \Dcris(\calN)$ and $\mathcal{Y} = \mathcal{X} \cap \mathcal{M}$.
    \end{enumerate}
   \end{proposition}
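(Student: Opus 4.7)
The plan is to prove the four parts in the order (b), (a), (d), (c). The main inputs are Proposition \ref{prop:elementary-divisors}, which gives $(t/\pi)^M \calN \subseteq \mathcal{M}$ for $M = \max_i s_i$, and the observation that if $\{y_1, \dots, y_d\}$ is a $\Qp$-basis of $\Dcris(\calN)$ whose first $k$ elements span $Y$, then these serve simultaneously as a $\Brig$-basis of $\mathcal{M}$, with $\mathcal{Y} = \bigoplus_{i \le k} \Brig \cdot y_i$.

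For (b), one direction takes $a = (t/\pi)^m$. Conversely, given $a x \in \mathcal{Y}$ with $a \ne 0$, I expand $(t/\pi)^M x = \sum_{i=1}^d c_i y_i$ in $\mathcal{M}$ and compare with $(t/\pi)^M \cdot a x = a \cdot (t/\pi)^M x$, which lies in $\bigoplus_{i \le k} \Brig y_i$. Comparing $y_i$-coefficients forces $a c_i = 0$ for $i > k$; since $\Brig$ is a domain this gives $c_i = 0$ for $i > k$, so $(t/\pi)^M x \in \mathcal{Y}$ and $x \in \mathcal{X}$. Part (a) then follows immediately from (b): if $a x \in \mathcal{Y}$ with $a \ne 0$, then $g(ax) = g(a) g(x) \in g(\mathcal{Y}) = \mathcal{Y}$ (since $Y \subseteq \Dcris(\calN) = \calN^\Gamma$) and $\vp(ax) = \vp(a) \vp(x) \in \vp(\mathcal{Y}) \subseteq \mathcal{Y}$ (since $Y$ is $\vp$-stable); as $g$ and $\vp$ act injectively on $\Brig$, both $g(a)$ and $\vp(a)$ are nonzero, so $g(x), \vp(x) \in \mathcal{X}$ by the converse direction of (b).

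For (d), the same coordinate comparison handles both equalities. If $x = \sum_i c_i y_i$ lies in $\mathcal{X} \cap \mathcal{M}$ with $c_i \in \Brig$ (resp.\ in $\mathcal{X} \cap \Dcris(\calN)$ with $c_i \in \Qp$), then $(t/\pi)^M x \in \mathcal{Y}$ forces the $y_i$-coefficients $(t/\pi)^M c_i$ to vanish for $i > k$; integrality of $\Brig$ then yields $c_i = 0$ for $i > k$, placing $x$ in $\mathcal{Y}$ (resp.\ in $Y$). The reverse inclusions are trivial.

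For (c), the argument of (b) in fact establishes $\mathcal{X} \subseteq (t/\pi)^{-M} \mathcal{Y}$, a finitely generated free $\Brig$-module of rank $k$; localizing at $(t/\pi)$ gives $\mathcal{X}[(\pi/t)] = \mathcal{Y}[(\pi/t)]$, which pins the rank of $\mathcal{X}$ at $k$. The delicate point, which I expect to be the main obstacle, is freeness: $\Brig$ is not Noetherian, so one cannot conclude finite generation of $\mathcal{X}$ for free. I would handle this by noting that $\mathcal{X}/\mathcal{Y}$ embeds in $(t/\pi)^{-M}\mathcal{Y}/\mathcal{Y}$ and is therefore annihilated by the single element $(t/\pi)^M$, and then invoking the elementary divisor theory of the Bezout ring $\Brig$ (applied to the pair $(t/\pi)^M \mathcal{X} \subseteq \mathcal{Y}$) to obtain both finite generation and freeness of $\mathcal{X}$.
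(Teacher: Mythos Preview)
Your arguments for (b), (a), and (d) are correct and essentially the same as the paper's, only phrased in explicit coordinates where the paper uses the words ``saturated'' and ``torsion-free''. The paper derives (a) directly from the definition of $\mathcal{X}$ rather than from (b), but your route via (b) is equally valid and arguably cleaner for the $\vp$-stability (one does not have to track what $\vp$ does to $t/\pi$).

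The gap is in (c). You correctly identify that finite generation of $\mathcal{X}$ is the issue, but your proposed fix is circular: the elementary divisor theory of $\Brig$, as stated in the paper, takes a \emph{finitely generated} submodule $M \subseteq N$ as input. Applying it to the pair $(t/\pi)^M \mathcal{X} \subseteq \mathcal{Y}$ presupposes that $(t/\pi)^M \mathcal{X}$, and hence $\mathcal{X}$, is already finitely generated, which is exactly what you are trying to prove. (Note also that $\Brig / (t/\pi)^M \Brig$ is itself not Noetherian, so embedding $\mathcal{X}/\mathcal{Y}$ into $((t/\pi)^{-M}\mathcal{Y})/\mathcal{Y}$ does not help directly.)

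The clean fix is to apply elementary divisors to the pair $\mathcal{Y} \subseteq \calN$ instead, where both modules are known to be finitely generated and free. This produces a $\Brig$-basis $n_1, \dots, n_d$ of $\calN$ and nonzero $r_1, \dots, r_k \in \Brig$ such that $r_1 n_1, \dots, r_k n_k$ is a basis of $\mathcal{Y}$. A one-line computation then shows that the saturation of $\mathcal{Y}$ in $\calN$ is exactly $\bigoplus_{i \le k} \Brig\, n_i$; combined with your part (b), this gives $\mathcal{X}$ free of rank $k$. The paper's own proof of (c) is terse on this point (it jumps from ``torsion-free'' to ``free'' without mentioning finite generation), but this is the argument that makes it work.
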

    
   \begin{proof}
    Part (a) is immediate from the definition. 
   
    For (b), suppose $x \in \calN$ and there is some nonzero $a \in \Brig$ such that $ax \in \mathcal{Y}$. By Proposition~\ref{prop:elementary-divisors}, we can find $m$ such that $\left(\tfrac{t}{\pi}\right)^m x \in \mathcal{M}$, and $a \left(\tfrac{t}{\pi}\right)^m x \in \mathcal{Y}$. Since $\mathcal{Y}$ is clearly saturated in $\mathcal{M}$, we deduce that $\left(\tfrac{t}{\pi}\right)^m x \in \mathcal{Y}$, and hence $x \in \mathcal{X}$ as required.
    
    For part (c), it is immediate that $\mathcal{X}\left[\left(\tfrac{t}{\pi}\right)^{-1}\right]$ is free of rank $\dim_{\Qp} Y$ as a $\Brig\left[\left(\tfrac{t}{\pi}\right)^{-1}\right]$-module. Hence the rank of $\mathcal{X}$ over $\Brig$ must be equal to this. Also, since $\mathcal{X}$ is a submodule of the free module $\calN$, it is torsion-free. Hence it is free.
    
    For part (d), it is clear that $\mathcal{Y} \subseteq \mathcal{X} \cap \mathcal{M}$; and this inclusion is an equality, since $\mathcal{M} / \mathcal{Y}$ is torsion-free and $\mathcal{X} / \mathcal{Y}$ is torsion. Since $\mathcal{Y} \cap \Dcris(\calN) = Y$, the statement follows.
   \end{proof}

   Let $W = \Dcris(\calN) / Y$ and $\mathcal{W} = \calN / \mathcal{X}$. The quotient $\mathcal{W}$ is a $(\vp, \Gamma)$-module in the obvious way, and the natural map $W \into \Dcris(\mathcal{W})$ is clearly injective; hence it is also surjective, for reasons of dimension.

   \begin{proposition}\label{prop:quotient-filtration}
    The quotient filtation $\Fil^\bullet W$ induced on $W$ by the filtation of $\Dcris(N)$ agrees with the filtation $\FFil$ given by
    \[ \FFil^r W = \{ w \in W : \vp(w) \in q^r \mathcal{W}\}.\]
   \end{proposition}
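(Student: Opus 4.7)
The plan is to characterise both filtrations through Corollary~\ref{corollary:invariants} and reduce everything to a snake-lemma argument on the short exact sequence
\[ 0 \to \calX \to \calN \to \mathcal{W} \to 0 \]
taken modulo $q^r$. The easy inclusion $\Fil^r W \subseteq \FFil^r W$ is immediate: if $v \in \Fil^r \Dcris(\calN)$ then $\vp(v) \in q^r \calN$, so passing to $\mathcal{W}$ gives $\vp(\bar v) \in q^r \mathcal{W}$.

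Before tackling the reverse inclusion, I would record two preliminary facts. First, $\calX$ is itself crystalline with $\Dcris(\calX) = Y$: by Proposition~\ref{prop:basic-properties}(d), $Y = \calX \cap \Dcris(\calN) \subseteq \calX^\Gamma$, and $\dim Y$ equals the rank of $\calX$ by Proposition~\ref{prop:basic-properties}(c), forcing equality. Hence Corollary~\ref{corollary:invariants} applies to $\calX$. Second, $\calX$ is saturated in $\calN$: if $ax \in \calX$ with $a \in \Brig$ nonzero and $x \in \calN$, then Proposition~\ref{prop:basic-properties}(b) gives nonzero $b$ with $bax \in \mathcal{Y}$, and a further application yields $x \in \calX$. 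Consequently $\mathcal{W}$ is torsion-free over $\Brig$, so $\calX \cap q^r \calN = q^r \calX$, and reduction modulo $q^r$ gives a short exact sequence
\[ 0 \to \calX/q^r\calX \to \calN/q^r\calN \to \mathcal{W}/q^r\mathcal{W} \to 0. \]

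For the hard inclusion $\FFil^r W \subseteq \Fil^r W$, suppose $\bar v \in \FFil^r W$ and pick any lift $v \in \Dcris(\calN)$; by definition $\vp(v)$ maps to $0$ in $(\mathcal{W}/q^r\mathcal{W})^\Gamma$. The long exact sequence of $\Gamma$-cohomology identifies $(\calX/q^r\calX)^\Gamma$ with the kernel of $(\calN/q^r \calN)^\Gamma \to (\mathcal{W}/q^r \mathcal{W})^\Gamma$, so the image of $\vp(v)$ in $(\calN/q^r \calN)^\Gamma$ lies in $(\calX/q^r\calX)^\Gamma$. Applying Corollary~\ref{corollary:invariants} to the crystalline module $\calX$, the map $Y = \Dcris(\calX) \xrightarrow{\vp} (\calX/q^r\calX)^\Gamma$ is surjective, so we may choose $y \in Y$ with $\vp(y) \equiv \vp(v) \pmod{q^r \calX}$, hence a fortiori modulo $q^r \calN$. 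Then $v - y \in \Fil^r \Dcris(\calN)$ by Corollary~\ref{corollary:invariants} for $\calN$, and $\bar v = \overline{v-y} \in \Fil^r W$.

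The substantive point is verifying that $\calX$ is simultaneously crystalline and saturated, which is what legitimises both the application of Corollary~\ref{corollary:invariants} to $\calX$ and the left-exactness of the reduction modulo $q^r$. Once these two properties are in place, the rest of the argument is a formal diagram chase.
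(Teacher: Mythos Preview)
Your proof is correct and follows essentially the same approach as the paper: both arguments lift $\bar v$ to $\Dcris(\calN)$, observe that $\vp(v)$ lands in $(\calX/q^r\calX)^\Gamma$, and invoke Corollary~\ref{corollary:invariants} for $\calX$ to correct the lift by an element of $Y$. The paper does this by writing $\vp(y)=q^r y'+z$ and asserting $z\bmod q^r\calX$ is $\Gamma$-invariant, whereas you package the same step as left-exactness of $\Gamma$-invariants on the short exact sequence $0\to\calX/q^r\calX\to\calN/q^r\calN\to\mathcal{W}/q^r\mathcal{W}\to 0$; your version is slightly more explicit in recording why $\calX$ is crystalline and saturated, which the paper leaves implicit.
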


   \begin{proof}
    It is clear from the definition that $\Fil^r W \subseteq \FFil^r W$. 

    Conversely, let $y \in \Dcris(N)$ such that $[y]\in\FFil^r W$, so we can write $\vp(y) = q^r y' + z$ for some $y' \in N$ and $z \in \mathcal{X}$. Then 
    \[ z \bmod q^r \mathcal{X} \in (\mathcal{X} / q^r \mathcal{X})^{\Gamma}.\]
    Applying Corollary~\ref{corollary:invariants} to $\mathcal{X}$, we find that $z$ is congruent modulo $q^r$ to an element of $\mathcal{X}^\Gamma = Y$.
   \end{proof}

   The final result we will need about these quotients is the following slightly fiddly lemma. Let us suppose that the jumps in the filtration of $\Dcris(\calN)$, with multiplicity, are $s_1 \le s_2 \dots \le s_d$ (i.e.~the Hodge-Tate weights of $\calN$ are $-s_i$). We say that the $\vp$-stable subspace $Y$ is \emph{in general position} (with respect to the Hodge filtration of $\Dcris(\calN)$) if the jumps in the filtration $\Fil^\bullet Y$ are $s_1, \dots, s_j$, where $j = \dim_{\Qp} Y$.

   \begin{lemma}\label{lemma:fiddlylemma} If $Y$ is in general position, then for any $m \ge s_d$, we have
    \[ \left(\tfrac{t}{\pi}\right)^{m - s_{(j+1)}} \mathcal{M}  \subseteq \left(\tfrac{t}{\pi}\right)^{m} \calN + \mathcal{Y}.\]
   \end{lemma}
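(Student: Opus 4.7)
The plan is to construct a $\Brig$-basis of $\calN$ simultaneously compatible with $\mathcal{Y}$ and with the elementary divisor decomposition from Proposition~\ref{prop:elementary-divisors}, and then verify the containment term by term.

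First I would use the general position hypothesis to pick a $\Qp$-basis $v_1, \dots, v_d$ of $\Dcris(\calN)$ with $v_1, \dots, v_j$ a basis of $Y$ and $v_i \in \Fil^{s_i} \Dcris(\calN)$ for every $i$. For $i \le j$ this uses that $\Fil^\bullet Y$ has jumps $s_1, \dots, s_j$ (by general position) and so admits a filtration-adapted basis, each of whose elements also lies in $\Fil^{s_i} \Dcris(\calN)$; for $i > j$ it uses that the induced filtration on $\Dcris(\calN)/Y$ has jumps $s_{j+1}, \dots, s_d$, so a filtration-adapted basis of the quotient lifts to the required elements of $\Dcris(\calN)$.

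Next, I would appeal to the proof of Proposition~\ref{prop:elementary-divisors}, following \cite[Prop.~III.2.1]{berger04}, to get that each $v_i \in \Fil^{s_i} \Dcris(\calN)$ can be written as $v_i = (t/\pi)^{s_i} e_i$ for some $e_i \in \calN$. The $e_i$ are $\Brig$-linearly independent, and since the elementary divisors of $\calN/\mathcal{M}$ are $(t/\pi)^{s_1}, \dots, (t/\pi)^{s_d}$, a determinant count forces the transition matrix from $(e_i)$ to any $\Brig$-basis of $\calN$ to be a unit; so $(e_1, \dots, e_d)$ is itself a $\Brig$-basis of $\calN$.

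With these bases one has $\mathcal{Y} = \bigoplus_{i \le j} (t/\pi)^{s_i} \Brig e_i$ and $\mathcal{M} = \bigoplus_{i=1}^d (t/\pi)^{s_i} \Brig e_i$, and, using $m \ge s_d \ge s_i$ for $i \le j$,
\[ (t/\pi)^m \calN + \mathcal{Y} = \sum_{i \le j} (t/\pi)^{s_i} \Brig e_i + \sum_{i > j} (t/\pi)^m \Brig e_i. \]
Each generator of $(t/\pi)^{m-s_{j+1}} \mathcal{M}$ becomes $(t/\pi)^{m - s_{j+1} + s_i} e_i$: for $i \le j$ the exponent is $\ge s_i$ since $m \ge s_{j+1}$, so the generator lies in $\mathcal{Y}$; for $i > j$ the exponent is $\ge m$ since $s_i \ge s_{j+1}$, so it lies in $(t/\pi)^m \calN$.

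The hard part will be justifying the second step: Proposition~\ref{prop:elementary-divisors} as stated only gives the elementary divisors as an abstract $\Brig$-module invariant, whereas we need the basis of $\calN$ to be compatible with the specific filtration-adapted basis of $\Dcris(\calN)$ chosen in Step~1 (in particular, respecting the subspace $Y$). I expect this compatibility to fall out of Berger's inductive construction, which builds a basis of $\calN$ by dividing filtration-adapted elements of $\Dcris(\calN)$ by the appropriate powers of $t/\pi$; but it is this structural refinement, rather than the bare elementary divisor count, that carries the real content of the lemma.
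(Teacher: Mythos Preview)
Your Step 2 contains a genuine error: the implication ``$v \in \Fil^{s_i} \Dcris(\calN) \Rightarrow v \in (t/\pi)^{s_i} \calN$'' is false in general, so the hoped-for compatible basis $(e_i)$ of $\calN$ need not exist. To see why, note that divisibility of $v \in \Dcris(\calN)$ by $(t/\pi)^s$ inside $\calN$ is a local condition at each prime $\vp^{n-1}(q)$, and Corollary~\ref{corollary:invariants} computes these local kernels: the map $\vp^n : \Dcris(\calN) \to (\calN/\vp^{n-1}(q)^s\calN)^\Gamma$ has kernel $\Fil^s$, so the kernel of the \emph{inclusion} $\Dcris(\calN) \to \calN/\vp^{n-1}(q)^s\calN$ is $\vp^n(\Fil^s)$. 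Hence
\[
\Dcris(\calN) \cap (t/\pi)^s \calN \;=\; \bigcap_{n \ge 1} \vp^n\bigl(\Fil^s \Dcris(\calN)\bigr),
\]
which equals $\Fil^s$ precisely when $\Fil^s$ is $\vp$-stable. For a generic crystalline representation (already for a non-CM modular form) the Hodge filtration is not $\vp$-stable, and then this intersection can even be zero when $\Fil^s$ is a line. Berger's proof of Proposition~III.2.1 does not build the elementary-divisor basis of $\mathcal{M}$ inside $\Dcris(\calN)$; the adapted $\Brig$-basis of $\mathcal{M}$ has no reason to consist of $\Gamma$-invariant vectors.

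The paper's argument sidesteps this by exploiting the one place where $\vp$-stability \emph{is} available: the subspace $Y$. It passes to the quotient $(\vp,\Gamma)$-module $\mathcal{W}=\calN/\mathcal{X}$, uses Proposition~\ref{prop:quotient-filtration} (which needs $Y$ to be $\vp$-stable) to identify its Hodge--Tate weights as $-s_{j+1},\dots,-s_d$, and then applies Proposition~\ref{prop:elementary-divisors} to $\mathcal{W}$ to obtain $\Brig\otimes W \subseteq (t/\pi)^{s_{j+1}}\mathcal{W}$, i.e.\ $\mathcal{M} \subseteq (t/\pi)^{s_{j+1}}\calN + \mathcal{X}$. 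Multiplying through by $(t/\pi)^{m-s_{j+1}}$ and then intersecting with $\mathcal{M}$ (using $\mathcal{X}\cap\mathcal{M}=\mathcal{Y}$ from Proposition~\ref{prop:basic-properties}(d)) replaces $\mathcal{X}$ by $\mathcal{Y}$ and gives the lemma. The passage through the saturation $\mathcal{X}$ is not a detour but the mechanism that converts $\vp$-stability of $Y$ into the divisibility statement you were trying to obtain directly.
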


   \begin{proof}
    Consider the quotient module $\mathcal{W} = \calN / \mathcal{X}$. This is a crystalline $(\vp, \Gamma)$-module over $\Brig$ of rank $d - j$ with $\Gamma$ trivial modulo $\pi$, and its Hodge-Tate weights are exactly $\{s_{(j+1)}, \dots, s_d\}$; hence its $\Gamma$-invariants lie in $\left(\tfrac{t}{\pi}\right)^{s_{(j+1)}} \mathcal{W}$. This is equivalent to 
    \( \mathcal{M} \subseteq \left(\tfrac{t}{\pi}\right)^{s_{(j+1)}} \calN + \mathcal{X}\).
    Multiplying by $(\tfrac{t}{\pi})^{m - s_{(j+1)}}$, we see that
    \[ \left(\tfrac{t}{\pi}\right)^{m - s_{(j+1)}} \mathcal{M} \subseteq \left(\tfrac{t}{\pi}\right)^{m} \calN + \left(\tfrac{t}{\pi}\right)^{m - s_{(j+1)}}\mathcal{X}.\]
    Since both $\left(\tfrac{t}{\pi}\right)^{m - s_{(j+1)}} \mathcal{M}$ and $\left(\tfrac{t}{\pi}\right)^{m} \mathcal{N}$ are manifestly contained in $\mathcal{M}$, we may replace the last term with its intersection with $\mathcal{M}$, which is clearly contained in $\mathcal{X} \cap \mathcal{M} = \mathcal{Y}$.
   \end{proof}

  \subsection{Application to crystalline representations}
  
   Let $V$ be a $d$-dimensional crystalline representation of $G_{\Qp}$ with Hodge-Tate weights $\{-s_1, \dots, -s_d\}$, where $0 \le s_1 \le \dots \le s_d$ (so $V$ is positive in the sense of \S \ref{sect:crystallinereps} above). As above, we define $\Nrig(V) = \Brig \otimes_{\mathbb{B}^+_{\mathbb{Q}_p}} \mathbb{N}(V)$, where $\NN(V)$ is the Wach module of $V$ as constructed in \cite{berger04}. Then $\Nrig(V)$ is a crystalline $(\vp, \Gamma)$-module over $\Brig$ with $\Gamma$ trivial modulo $\pi$, and $\Dcris(V)$ is isomorphic (as a filtered $\vp$-module over $\Qp$) to $\Dcris(\Nrig(V))$ as defined in the previous section \cite[Theorems II.2.2 and III.4.4]{berger04}.

   If $V$ is in fact an $E$-linear representation, for $E$ some finite extension of $\Qp$, then $\Nrig(V)$ is naturally an $E \otimes_{\Qp} \Brig$-module, and $\Dcris(V)$ is a filtered $E$-vector space. If we choose an $E$-linear $\vp$-stable subspace, then all of the above constructions commute with the additional $E$-linear structure.

   As in \cite[\S 2.4.1]{bellaichechenevier09}, we define a \emph{refinement} of $V$ to be a family $\underline{Y} = (Y_i)_{i=1}^d$ of $\vp$-stable $E$-linear subspaces of $\Dcris(V)$, with $0 \subsetneq Y_1 \subsetneq \dots \subsetneq Y_d = \Dcris(V)$, so $\dim_E Y_i = i$. We say that the refinement is \emph{non-critical} \cite[definition 2.4.5]{bellaichechenevier09} if $\Dcris(V) = Y_i \oplus \Fil^{s_i + 1} \Dcris(V)$ for each $i$; as shown in \cite[Proposition 2.4.7]{bellaichechenevier09}, this is true if and only if each of the spaces $Y_i$ is in general position in the above sense, so the jumps of the filtration obtained by intersecting with $Y_i$ are $\{s_1, \dots, s_i\}$.

   We shall suppose that $V$ admits a non-critical refinement, and fix a choice of such a refinement $\underline{Y}$. As in the previous subsection, the $\vp$-stable subspaces $Y_i$ determine submodules $\mathcal{Y}_i = \Brig \otimes_{\Qp} Y_i \subseteq \mathcal{M}$ and $\mathcal{X}_i = \mathcal{Y}_i^{\mathrm{sat}}$.
   
   Let us consider the representation $V(m)$, for some $m \ge s_d$. This has non-negative Hodge-Tate weights $\{m - s_i\}_{i = 1, \dots, d}$. If $e_m$ denotes a basis of $\Qp(m)$, then we have
   \[ \begin{aligned}
       \Dcris(V(m)) &= \{t^{-m} x \otimes e_m : x \in \Dcris(V)\},\\
       \Nrig(V(m)) &= \{ \pi^{-m} y \otimes e_m : y \in \Nrig(V)\}.
      \end{aligned}
   \]

   We define $\mathcal{A}_i = \{ \pi^{-m} y \otimes e_m : y \in \mathcal{X}_i\}$ and $\mathcal{B}_i = \{ t^{-m} x \otimes e_m : x \in \mathcal{Y}_i\}$.
   \begin{proposition}
    For each $i = 1, \dots, d$,
    \begin{enumerate}
     \item[(a)] $(\tfrac{t}{\pi})^{m - s_i} \mathcal{B}_i \supseteq \mathcal{A}_i \supseteq (\tfrac{t}{\pi})^{m-s_1} \mathcal{B}_i$;
     \item[(b)] $\mathcal{B}_i$ is the saturation of $\mathcal{A}_i$ in $\mathcal{B}_d = \Brig \otimes \Dcris(V(m))$;
     \item[(c)] The inclusion $\mathcal{A}_d \into \mathcal{B}_d$ identifies $\mathcal{A}_d / \mathcal{A}_{i-1}$ with a submodule of $\mathcal{B}_d / \mathcal{B}_{i-1}$ and the quotient is annihilated by $(\tfrac{t}{\pi})^{m - s_i}$.
    \end{enumerate}
   \end{proposition}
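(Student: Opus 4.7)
My plan is to reduce the proposition to a single key computation: applying Proposition~\ref{prop:elementary-divisors} to the submodule $\mathcal{X}_i$ itself rather than to $\calN$. I would first verify the hypotheses. By Proposition~\ref{prop:basic-properties}, $\mathcal{X}_i$ is a $(\vp, \Gamma)$-stable $\Brig$-submodule of $\calN$, free of rank $i$, with $\Dcris(\mathcal{X}_i) = Y_i$, so $\mathcal{X}_i$ is crystalline. The saturation property in Proposition~\ref{prop:basic-properties}(b) forces $\mathcal{X}_i \cap \pi\calN = \pi\mathcal{X}_i$, so the action of $\Gamma$ on $\mathcal{X}_i/\pi\mathcal{X}_i$ inherits triviality from $\calN/\pi\calN$. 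A short argument, again using saturation, shows that the filtration on $Y_i$ induced from the $(\vp,\Gamma)$-structure of $\mathcal{X}_i$ coincides with the one inherited from $\Dcris(V)$: if $v \in Y_i$ satisfies $\vp(v) \in q^j\calN$, write $\vp(v) = q^j x$ with $x \in \calN$; then $q^j x \in \mathcal{X}_i$ forces $x \in \mathcal{X}_i$. Since $Y_i$ is in general position, the jumps of this filtration are $s_1, \ldots, s_i$, and the Hodge-Tate weights of $\mathcal{X}_i$ are $-s_1, \ldots, -s_i$. Proposition~\ref{prop:elementary-divisors} then produces a $\Brig$-basis $e_1, \ldots, e_i$ of $\mathcal{X}_i$ such that $\bigl((t/\pi)^{s_j} e_j\bigr)_{j=1}^i$ is a basis of $\mathcal{Y}_i$.

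For part (a), after multiplying through by $\pi^m$, the two claimed inclusions become $(t/\pi)^{s_i}\mathcal{X}_i \subseteq \mathcal{Y}_i$ and $\pi^{s_1}\mathcal{Y}_i \subseteq t^{s_1}\mathcal{X}_i$. The first is the annihilator statement for the elementary divisors of $\mathcal{X}_i/\mathcal{Y}_i$. For the second, $\pi^{s_1}\mathcal{Y}_i$ is spanned by the elements $\pi^{s_1}(t/\pi)^{s_j} e_j = t^{s_1}(t/\pi)^{s_j - s_1} e_j$ for $j = 1, \ldots, i$, each of which lies in $t^{s_1}\mathcal{X}_i$ because $s_j - s_1 \geq 0$.

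Part (b) then follows quickly. By (a), $\mathcal{A}_i \subseteq \mathcal{B}_i$ with quotient annihilated by $(t/\pi)^{m-s_1}$, hence torsion; on the other hand $\mathcal{B}_d/\mathcal{B}_i \cong \Brig \otimes_{\Qp} (\Dcris(V)/Y_i) \otimes e_m$ is a free $\Brig$-module, so $\mathcal{B}_i$ is saturated in $\mathcal{B}_d$. Combining these identifies $\mathcal{B}_i$ as the saturation of $\mathcal{A}_i$ in $\mathcal{B}_d$.

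For part (c), injectivity of $\mathcal{A}_d/\mathcal{A}_{i-1} \into \mathcal{B}_d/\mathcal{B}_{i-1}$ amounts to the equality $\mathcal{A}_d \cap \mathcal{B}_{i-1} = \mathcal{A}_{i-1}$; by (b) the left-hand side equals the saturation of $\mathcal{A}_{i-1}$ inside $\mathcal{A}_d$, and this is already $\mathcal{A}_{i-1}$ because $\mathcal{X}_{i-1}$, being itself a saturation in $\calN$, is saturated in $\calN = \mathcal{X}_d$. For the cokernel claim, apply Lemma~\ref{lemma:fiddlylemma} with $Y = Y_{i-1}$ (so $j = i-1$) to obtain $(t/\pi)^{m-s_i}\mathcal{M} \subseteq (t/\pi)^m \calN + \mathcal{Y}_{i-1}$, then multiply by $t^{-m}\otimes e_m$: the left side becomes $(t/\pi)^{m-s_i}\mathcal{B}_d$ and the right side becomes $\mathcal{A}_d + \mathcal{B}_{i-1}$, giving the desired annihilation. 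The most delicate step in the whole argument is the initial verification that $\mathcal{X}_i$ satisfies the hypotheses of Proposition~\ref{prop:elementary-divisors}, especially the compatibility of filtrations on $Y_i$; once that is granted, everything else is either a formal twist by $e_m$ or a routine saturation argument.
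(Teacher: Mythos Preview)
Your proof is correct and follows essentially the same route as the paper's: both reduce (a) to the chain $(t/\pi)^{s_1}\mathcal{X}_i \supseteq \mathcal{Y}_i \supseteq (t/\pi)^{s_i}\mathcal{X}_i$ via Proposition~\ref{prop:elementary-divisors} applied to $\mathcal{X}_i$, deduce (b) from (a) plus the obvious saturatedness of $\mathcal{B}_i$ in $\mathcal{B}_d$, and obtain (c) from $\mathcal{A}_d \cap \mathcal{B}_{i-1} = \mathcal{A}_{i-1}$ together with Lemma~\ref{lemma:fiddlylemma}. The paper simply asserts that the Hodge--Tate weights of $\mathcal{X}_i$ are $-s_1,\dots,-s_i$, whereas you take the extra care to verify the hypotheses of Proposition~\ref{prop:elementary-divisors} for $\mathcal{X}_i$ (triviality of $\Gamma$ on $\mathcal{X}_i/\pi\mathcal{X}_i$ via saturation, and compatibility of the intrinsic filtration on $Y_i$ with the one inherited from $\Dcris(V)$); this is a genuine point the paper glosses over, so your version is in fact more complete.
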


   \begin{proof}
    The chain of inclusions in (a) is equivalent to $(\tfrac{t}{\pi})^{s_1} \mathcal{X}_i \supseteq \mathcal{Y}_i \supseteq (\tfrac{t}{\pi})^{s_i} \mathcal{X}_i$, which follows from Propostion~\ref{prop:elementary-divisors} since the Hodge-Tate weights of $\mathcal{X}_i$ are $\{-s_1, \dots, -s_i\}$. Moreover, $\mathcal{B}_i$ is manifestly saturated in $\mathcal{B}_d$ (being the base extension of a subspace of $\Dcris(V(m))$), and together with (a), this proves (b).
    For part (c), we note that $\mathcal{A}_d \cap \mathcal{B}_{i-1} = \mathcal{A}_{i-1}$, so the given map is well-defined and injective; to show that the annihilator is as claimed, we must check that 
    \[ (\tfrac{t}{\pi})^{m - s_i} \mathcal{B}_d \subseteq \mathcal{B}_{i-1} + \mathcal{A}_d,\]
    which is equivalent to Lemma~\ref{lemma:fiddlylemma}.
   \end{proof}

   We now pass from the ``additive'' to the ``multiplicative'' situation. Let us define $\widetilde{\mathcal{A}}_i = \bigoplus_{s=1}^{p-1} (1 + \pi)^s \vp(\mathcal{A}_i)$, and similarly for $\widetilde{\mathcal{B}}_i$. Note that these are $\Gamma$-stable, since $\Gamma$ and $\vp$ commute. Moreover, the action of $\Gamma$ on $\widetilde{\mathcal{B}}$ clearly extends to an action of the ring $\calH(\Gamma)$, which is continuous with respect to the Fr\'echet topologies of $\calH(\Gamma)$ and $\widetilde{\mathcal{B}}$. As the submodules $\mathcal{B}_i$ and $\mathcal{A}_i$ are all clearly closed and $\Gamma$-invariant, they also inherit a Fr\'echet topology and a continuous action of $\calH(\Gamma)$.

   \begin{remark} Note that we can define an operator $\psi: \mathcal{B}_d \to \mathcal{B}_d$ which is $\vp^{-1}$ on $\Dcris(V)$ and is $\Brig$-semilinear (for the usual definition of $\psi$ acting on $\Brig$). Then $\widetilde{\mathcal{A}}_i = (\vp^* \mathcal{A}_i)^{\psi = 0}$, where $\vp^* \mathcal{A}_i$ is the $\Brig$-submodule of $\mathcal{B}_i$ generated by $\vp(\mathcal{A}_i)$. Clearly we have $\vp^*(\mathcal{B}_i) = \mathcal{B}_i$ for all $i$.
   \end{remark}

   \begin{lemma}
    For each $i = 1, \dots, d$, these spaces have the following properties:
    \begin{enumerate}
     \item[(a)] $\widetilde{\mathcal{A}}_i \subseteq \widetilde{\mathcal{B}}_i$.
     \item[(b)] $\widetilde{\mathcal{A}}_d \cap \widetilde{\mathcal{B}}_i = \widetilde{\mathcal{A}}_i$.
     \item[(c)] The quotient $\widetilde{\mathcal{B}}_d / (\widetilde{\mathcal{B}}_{i-1} + \widetilde{\mathcal{A}}_d)$ is annihilated by $\mathfrak{n}_{m - s_i}$.
     \item[(d)] The quotient $\widetilde{\mathcal{B}}_i / (\widetilde{\mathcal{B}}_{i-1} + \widetilde{\mathcal{A}}_i)$ is cyclic as a $\mathcal{H}(\Gamma)$-module, generated by $(1 + \pi) \vp(v_i)$, and its annihilator is $\mathfrak{n}_{m - s_i}$.
    \end{enumerate}
   \end{lemma}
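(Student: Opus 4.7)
The plan is to deduce all four claims from the additive results of the preceding proposition, by systematically exploiting the $\vp(\Brig)$-module decomposition $\Brig = \bigoplus_{s=0}^{p-1} (1+\pi)^s\vp(\Brig)$. Restricting this decomposition to the $\Brig$-modules at hand, we have $\widetilde{\mathcal{B}}_i = \mathcal{B}_i^{\psi=0} = \bigoplus_{s=1}^{p-1} (1+\pi)^s \vp(\mathcal{B}_i)$, and likewise $\widetilde{\mathcal{A}}_i = \bigoplus_{s=1}^{p-1}(1+\pi)^s\vp(\mathcal{A}_i)$.

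Part (a) is immediate from the inclusion $\mathcal{A}_i \subseteq \mathcal{B}_i$ already proven in part (a) of the preceding proposition. For (b), any element of $\widetilde{\mathcal{A}}_d \cap \widetilde{\mathcal{B}}_i$ splits, component by component in the direct sum, into elements of $\vp(\mathcal{A}_d) \cap \vp(\mathcal{B}_i)$, and the injectivity of $\vp$ together with the additive identity $\mathcal{A}_d \cap \mathcal{B}_i = \mathcal{A}_i$ (from part (c) of the preceding proposition, with the index shifted up by one) gives $\widetilde{\mathcal{A}}_d \cap \widetilde{\mathcal{B}}_i = \widetilde{\mathcal{A}}_i$.

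For (d), I first identify $\mathcal{B}_i / \mathcal{B}_{i-1}$ with $\Brig \cdot v_i$. Applying Proposition~\ref{prop:elementary-divisors} to the rank-one crystalline $(\vp, \Gamma)$-module $\mathcal{X}_i / \mathcal{X}_{i-1}$---whose Hodge-Tate weight is $-s_i$ by non-criticality of the refinement, and whose $\Dcris$ is $Y_i / Y_{i-1}$---yields elementary divisor $(\tfrac{t}{\pi})^{s_i}$ for the quotient by $\mathcal{Y}_i / \mathcal{Y}_{i-1}$. Twisting by $\pi^{-m} \otimes e_m$ then shows that the image of $\mathcal{A}_i / \mathcal{A}_{i-1}$ inside $\mathcal{B}_i / \mathcal{B}_{i-1}$ is precisely $(\tfrac{t}{\pi})^{m - s_i} \Brig \cdot v_i$. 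Applying $\bigoplus_s (1+\pi)^s \vp(\cdot)$ and using $\vp((\tfrac{t}{\pi})^{m-s_i}) = p^{m-s_i}(\tfrac{t}{\vp(\pi)})^{m-s_i}$ (with $p^{m-s_i}$ a unit), the image of $\widetilde{\mathcal{A}}_i$ in $\widetilde{\mathcal{B}}_i / \widetilde{\mathcal{B}}_{i-1} \cong \Brig^{\psi=0} \cdot \vp(v_i)$ is $(\tfrac{t}{\vp(\pi)})^{m-s_i} \Brig^{\psi=0} \cdot \vp(v_i)$. By Proposition~\ref{prop:annihilator}, this $\calH(\Gamma)$-submodule corresponds under Mellin to the ideal $\mathfrak{n}_{m-s_i} \calH(\Gamma)$ applied to the generator $(1+\pi)\vp(v_i)$; this yields (d).

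For (c), applying $\bigoplus_s (1+\pi)^s \vp(\cdot)$ to the additive containment $(\tfrac{t}{\pi})^{m-s_i}\mathcal{B}_d \subseteq \mathcal{B}_{i-1} + \mathcal{A}_d$ of part (c) of the preceding proposition gives $\vp((\tfrac{t}{\pi})^{m-s_i})\widetilde{\mathcal{B}}_d \subseteq \widetilde{\mathcal{B}}_{i-1} + \widetilde{\mathcal{A}}_d$, which by the same Mellin identification becomes $\mathfrak{n}_{m-s_i} \widetilde{\mathcal{B}}_d \subseteq \widetilde{\mathcal{B}}_{i-1} + \widetilde{\mathcal{A}}_d$. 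The main subtle step throughout---equating the $\Brig$-multiplication by $(\tfrac{t}{\vp(\pi)})^k$ on $\Brig^{\psi=0}$ with the $\calH(\Gamma)$-action of $\mathfrak{n}_k$---is exactly the content of Proposition~\ref{prop:annihilator}.
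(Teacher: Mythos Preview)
Your proof is correct and follows essentially the same approach as the paper's: both deduce (a) and (b) directly from the corresponding additive statements for $\mathcal{A}_i$ and $\mathcal{B}_i$, and both reduce (c) and (d) to the identification $\mathfrak{n}_k \cdot (\Brig)^{\psi=0} = \big((\tfrac{t}{\vp(\pi)})^k \Brig\big)^{\psi=0}$ of Proposition~\ref{prop:annihilator}. Your treatment is somewhat more explicit about the mechanism (the direct-sum decomposition $\bigoplus_s (1+\pi)^s \vp(-)$) and derives the rank-one elementary divisor for (d) by applying Proposition~\ref{prop:elementary-divisors} to $\mathcal{X}_i/\mathcal{X}_{i-1}$, whereas the paper simply quotes that $\mathcal{B}_i/(\mathcal{B}_{i-1}+\mathcal{A}_i)$ has the single elementary divisor $(\tfrac{t}{\pi})^{m-s_i}$; but these are minor expository differences, not a different strategy.
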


   \begin{proof}
    Parts (a) and (b) are clear from the corresponding statements for the spaces $\mathcal{A}_i$ and $\mathcal{B}_i$. For part (c), we note that $\mathcal{B}_d / \mathcal{B}_{i-1}$ is isomorphic as a $(\vp, \Gamma)$-module over $\Brig$ to the tensor product 
    \[ \Brig \otimes_{\Qp} (Y_d / Y_{i-1})\]
    with $\Gamma$ acting trivially on the latter factor. By Proposition~\ref{prop:annihilator}, we have
    \[ \mathfrak{n}_k \cdot (\mathcal{B}_d / \mathcal{B}_{i-1})^{\psi = 0} = \left(\left(\tfrac{t}{\vp(\pi)}\right)^k \mathcal{B}_d / \mathcal{B}_{i-1}\right)^{\psi = 0}.\]
    Since $\mathcal{B}_d / (\mathcal{B}_{i-1} + \mathcal{A}_d)$ is annihilated by $\left(\tfrac{t}{\pi}\right)^{m - s_i}$, we deduce that $\mathfrak{n}_{m - s_i}$ annihilates $\widetilde{\mathcal{B}}_d / (\widetilde{\mathcal{B}}_{i-1} + \widetilde{\mathcal{A}}_d)$. Similarly, since $\mathcal{B}_i / (\mathcal{B}_{i-1} + \mathcal{A}_i)$ has the single elementary divisor $\left(\tfrac{t}{\pi}\right)^{m - s_i}$, we deduce that $\mathfrak{n}_{m - s_i}$ is the exact annihilator of the corresponding $\calH(\Gamma)$-module $\widetilde{\mathcal{B}}_i / (\widetilde{\mathcal{B}}_{i-1} + \widetilde{\mathcal{A}}_i)$.
   \end{proof}

   \begin{theorem}[Theorem \ref{lettertheorem:HGelem}]\label{theorem:HGelem}
    Let $W$ be any $E$-linear crystalline representation of $G_{\Qp}$ with non-negative Hodge-Tate weights $r_1 \le \dots \le r_d$. Then the $E \otimes \mathcal{H}(\Gamma)$-elementary divisors of the quotient
    \[ (\Brig)^{\psi = 0} \otimes \Dcris(W) / (\vp^* \Nrig(W))^{\psi = 0}\]
    are $[\mathfrak{n}_{r_1}; \dots; \mathfrak{n}_{r_d}]$.
   \end{theorem}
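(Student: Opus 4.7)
The plan is to reduce the theorem to the case where $W$ admits a non-critical refinement over $E$, after which the elementary divisors can be read off directly from the filtration machinery established above by invoking Corollary~\ref{corollary:elem-divisors}. As a preliminary step, since the formation of $\Dcris$, $\Nrig$, and the passage to $\psi=0$ invariants all commute with finite extensions of the coefficient field $E$, and since the candidate divisors $\mathfrak{n}_{r_i}$ lie in $\calH(\Gamma)$ independently of $E$, the faithful flatness of $E\to E'$ lets us recover the elementary divisors over $E\otimes\calH(\Gamma)$ from those over $E'\otimes\calH(\Gamma)$. By the theory of refinements recalled in \cite[\S 2.4]{bellaichechenevier09}, a non-critical refinement exists for some sufficiently large $E'/E$, so we may assume one is already defined over $E$.

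Fix such a refinement $\underline{Y}=(Y_1\subsetneq\cdots\subsetneq Y_d=\Dcris(W))$. Since $W$ has non-negative Hodge-Tate weights, set $V=W(-m)$ with $m=r_d$; then $V$ is positive, with Hodge-Tate weights $\{-s_1,\dots,-s_d\}$ ordered so that $0\le s_1\le\cdots\le s_d$, and $\{r_i\}_i=\{m-s_i\}_i$ as multisets. The refinement $\underline{Y}$ transports via the twist to a non-critical refinement of $V$, and we apply the constructions of \S 2.3 to obtain the nested submodules $\mathcal{A}_i\subseteq\mathcal{B}_i\subseteq\mathcal{B}_d=\Brig\otimes\Dcris(W)$ together with their $\vp$-translates $\widetilde{\mathcal{A}}_i\subseteq\widetilde{\mathcal{B}}_i$. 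By construction, $\widetilde{\mathcal{B}}_d=(\Brig)^{\psi=0}\otimes\Dcris(W)$ and $\widetilde{\mathcal{A}}_d=(\vp^*\Nrig(W))^{\psi=0}$, so the target quotient of the theorem is exactly $M:=\widetilde{\mathcal{B}}_d/\widetilde{\mathcal{A}}_d$.

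Define $M_i=(\widetilde{\mathcal{B}}_i+\widetilde{\mathcal{A}}_d)/\widetilde{\mathcal{A}}_d$, giving a filtration $0=M_0\subseteq M_1\subseteq\cdots\subseteq M_d=M$. Part~(b) of the preceding lemma combined with the modular law identifies $M_i/M_{i-1}$ with $\widetilde{\mathcal{B}}_i/(\widetilde{\mathcal{B}}_{i-1}+\widetilde{\mathcal{A}}_i)$; part~(d) identifies the latter with $\calH(\Gamma)/\mathfrak{n}_{m-s_i}$; and part~(c) yields $\mathfrak{n}_{m-s_i}\cdot M\subseteq M_{i-1}$. These are precisely the hypotheses of Corollary~\ref{corollary:elem-divisors}, whence
\[
 M \cong \bigoplus_{i=1}^d \calH(\Gamma)/\mathfrak{n}_{m-s_i} = \bigoplus_{i=1}^d \calH(\Gamma)/\mathfrak{n}_{r_i}.
\]
Since $\mathfrak{n}_k$ is by definition a product of $k$ factors of the form $\log(u^{1-j}(1+X))/(X-u^{j-1}+1)$, the divisibility chain $\mathfrak{n}_{r_1}\mid\mathfrak{n}_{r_2}\mid\cdots\mid\mathfrak{n}_{r_d}$ follows from $r_1\le\cdots\le r_d$, and so these are the elementary divisors as claimed.

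The main obstacle is the preliminary reduction to the non-critical refinement case: one must verify both that such a refinement exists after a finite extension of scalars, and that the elementary divisors, being elements of $\calH(\Gamma)\subset E\otimes\calH(\Gamma)$, genuinely descend from $E'\otimes\calH(\Gamma)$ back to the ground ring. Once this is in place, the remainder of the argument is essentially bookkeeping built on the structural results of the preceding lemma.
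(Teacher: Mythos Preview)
Your proposal is correct and follows essentially the same approach as the paper: reduce to the case where a non-critical refinement exists by extending the coefficient field (the paper phrases this as ``assume the Frobenius eigenvalues lie in $E$''), twist to a positive representation, use the filtration coming from the refinement together with the preceding lemma to put the quotient into the form covered by Corollary~\ref{corollary:elem-divisors}, and descend. Your filtration $M_i=(\widetilde{\mathcal{B}}_i+\widetilde{\mathcal{A}}_d)/\widetilde{\mathcal{A}}_d$ is canonically isomorphic to the paper's $\widetilde{\mathcal{B}}_i/\widetilde{\mathcal{A}}_i$ via part~(b) of the lemma, so the two arguments are identical in substance.
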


   \begin{proof}
    Let us choose an $m$ such that $V = W(-m)$ is positive. Then the Hodge-Tate weights of $V$ are $-s_1 \ge \dots \ge -s_d$, where $s_i = m - r_{d + 1 - i} \ge 0$. Suppose first that all of the eigenvalues of $\vp$ on $\Dcris(W)$, and hence on $\Dcris(V)$, are in $E$. Then there exists at least one non-critical refinement of $V$. Choosing such a refinement, we may argue as above to deduce that the $E \otimes \calH(\Gamma)$-module $M = \Dcris(W) / (\vp^* \Nrig(W))^{\psi = 0} = \mathcal{B}_d / \mathcal{A}_d$ has a filtration by $E \otimes_{\Qp} \calH(\Gamma)$-modules $M_i = \mathcal{B}_i / \mathcal{A}_i$ where $M_{i} / M_{i-1}$ is cyclic with annihilator $\mathfrak{n}_{m - s_i}$, and $\mathfrak{n}_{m - s_i}$ annihilates $M / M_{i-1}$, so the module $M$ is of the type covered by Corollary~\ref{corollary:elem-divisors}. This gives the result in this special case.

    If the Frobenius eigenvalues do not lie in $E$, let $F$ be an extension of $E$ which does contain them. Then we may consider the representation $V \otimes_{E} F$ and apply the above argument to this representation. We note that if $M$ is any $E \otimes \calH(\Gamma)$-module, then the elementary divisors of $F \otimes_E M$ as a $F \otimes \calH(\Gamma)$-module are the base extensions of the elementary divisors of $M$; by uniqueness, we have the above formula for any $E$.
   \end{proof}

   We now briefly explain how $\vp^* \Nrig(V)$ is related to the Wach module $\NN(V)$ considered in our earlier work. Note that $\calH(\Gamma)$ and $\vp(\Brig)$ are both Fr\'echet-Stein algebras in the sense of \cite{schneiderteitelbaum03} (by Theorem 5.1 of \textit{op.cit.}); hence any finite-rank free module over either one of these algebras has a canonical topology, and a submodule of such a module is finitely-generated if and only if it is closed in this topology (Corollary~3.4(ii) of \textit{op.cit.}).

  \begin{proposition}\label{prop:HGN}
   There is an isomorphism 
   \[ (\vp^* \Nrig(V))^{\psi = 0} \cong \calH(\Gamma) \otimes_{\Lambda_{\Qp}(\Gamma)} (\vp^*\NN(V))^{\psi=0}.\]
  \end{proposition}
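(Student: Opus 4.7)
The plan is to identify both sides of the isomorphism as free $\calH(\Gamma)$-modules of rank $d$ via a compatible choice of basis. Fix a $\BB^+_{\Qp}$-basis $n_1, \ldots, n_d$ of $\NN(V)$, so that $\vp(n_1), \ldots, \vp(n_d)$ is simultaneously a $\BB^+_{\Qp}$-basis of $\vp^*\NN(V)$ and a $\Brig$-basis of $\vp^*\Nrig(V)$. The identity $\psi(f \vp(n)) = \psi(f) n$ (for $f \in \Brig$, $n \in \Nrig(V)$) together with linear independence of the $n_i$ immediately yields
\[ (\vp^*\Nrig(V))^{\psi=0} = \bigoplus_{i=1}^d (\Brig)^{\psi=0} \vp(n_i), \qquad (\vp^*\NN(V))^{\psi=0} = \bigoplus_{i=1}^d (\BB^+_{\Qp})^{\psi=0} \vp(n_i). \]

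The key step is to verify that the $d$ elements $(1+\pi)\vp(n_i)$ form an $\calH(\Gamma)$-basis of $(\vp^*\Nrig(V))^{\psi=0}$ and a $\Lambda_{\Qp}(\Gamma)$-basis of $(\vp^*\NN(V))^{\psi=0}$, with respect to the natural action of $\Gamma$ extended to $\calH(\Gamma)$ (resp.\ $\Lambda_{\Qp}(\Gamma)$) by continuity. Under the Mellin isomorphisms $\mathfrak{M}: \calH(\Gamma) \cong (\Brig)^{\psi=0}$ and $\Lambda_{\Qp}(\Gamma) \cong (\BB^+_{\Qp})^{\psi=0}$, each summand on the right-hand side of the decompositions above is a single copy of the appropriate distribution algebra. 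The defining property of the Wach module is that $\Gamma$ acts trivially on $\NN(V)/\pi\NN(V)$, so the matrix $A(\gamma) \in \GL_d(\BB^+_{\Qp})$ representing any $\gamma \in \Gamma$ in the basis $n_i$ satisfies $A(\gamma) \equiv I \pmod \pi$, whence $\vp(A(\gamma)) \equiv I \pmod{\vp(\pi)}$. The change-of-basis matrix between the Mellin coordinates on the $\vp(n_i)$-decomposition and the group-action coordinates on the $(1+\pi)\vp(n_i)$-basis is then a perturbation of the identity that becomes invertible thanks to the Fr\'echet-Stein completeness of $\calH(\Gamma)$ and $\Lambda_{\Qp}(\Gamma)$ recalled immediately before the proposition.

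With these bases in hand, the natural $\calH(\Gamma)$-linear map
\[ \Theta: \calH(\Gamma) \otimes_{\Lambda_{\Qp}(\Gamma)} (\vp^*\NN(V))^{\psi=0} \to (\vp^*\Nrig(V))^{\psi=0} \]
induced by the inclusion $\NN(V) \into \Nrig(V)$ sends $1 \otimes (1+\pi)\vp(n_i)$ to $(1+\pi)\vp(n_i)$, hence takes a basis to a basis and is therefore an isomorphism of free $\calH(\Gamma)$-modules of rank $d$. The main obstacle is the freeness assertion in the middle step: justifying the invertibility of the change-of-basis matrix requires a careful computation using the trivial $\Gamma$-action on $\NN(V)/\pi\NN(V)$ combined with the Fr\'echet-Stein topological machinery; everything else is essentially bookkeeping.
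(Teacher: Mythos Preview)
Your central step---showing that for an \emph{arbitrary} $\BB^+_{\Qp}$-basis $n_1,\dots,n_d$ of $\NN(V)$ the vectors $(1+\pi)\vp(n_i)$ form an $\calH(\Gamma)$-basis of $(\vp^*\Nrig(V))^{\psi=0}$---is precisely the statement that the paper flags as an open problem in the Remark following Corollary~\ref{Hmodulestructure}. Your ``perturbation of the identity'' sketch does not close this gap: the map $\Psi^{-1}\circ\Phi$ you implicitly invoke (where $\Psi$ comes from the Mellin/$\vp(n_i)$-coordinates and $\Phi$ from the $\calH(\Gamma)$-action on $(1+\pi)\vp(n_i)$) is only $\Qp$-linear, not $\calH(\Gamma)$-linear, so there is no matrix over $\calH(\Gamma)$ whose invertibility one could check; and congruence of $\vp(A(\gamma))$ to $I$ modulo $\vp(\pi)$ is a statement in $\Brig$, which does not translate into any smallness criterion in the Fr\'echet--Stein topology of $\calH(\Gamma)$ that would force invertibility.

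Even if you restrict to the special basis of Theorem~\ref{theorem:basis} (so that the $\Lambda_{\Qp}(\Gamma)$-freeness of $(\vp^*\NN(V))^{\psi=0}$ is granted), the corresponding $\calH(\Gamma)$-freeness statement is exactly Corollary~\ref{Hmodulestructure}, which in the paper's logic is \emph{deduced from} Proposition~\ref{prop:HGN}, not used to prove it; so your argument becomes circular. The paper avoids this entirely: rather than proving freeness first, it shows that the natural map $\Theta$ has image which is (i) finitely generated over $\calH(\Gamma)$ (hence closed, by the Fr\'echet--Stein property) and (ii) dense (since it contains $(\vp^*\NN(V))^{\psi=0}$ and $\vp(\BB^+_{\Qp})$ is dense in $\vp(\Brig)$), hence surjective. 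That is where the Fr\'echet--Stein machinery actually enters.
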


  \begin{proof}
   We first note that $\vp^* \Nrig(V)$ is a finitely-generated $\vp(\Brig)$-submodule of $\Dcris(V) \otimes_{\Qp} (\Brig)^{\psi = 0}$. Hence it is closed in the canonical Fr\'echet topology of the latter space. It is also $\Gamma$-stable. Since the Mellin transform is a topological isomorphism between $(\Brig)^{\psi = 0}$ and $\calH(\Gamma)$, we see that $\vp^* \Nrig(V)$ is a closed $\Gamma$-stable subspace of a finite-rank free $\calH(\Gamma)$-module; hence the action of $\Gamma$ extends to a (continuous) action of $\calH(\Gamma)$. So there is a natural embedding of $\calH(\Gamma) \otimes_{\Lambda_{\Qp}(\Gamma)} (\vp^*\NN(V))^{\psi=0}$ into $(\vp^* \Nrig(V))^{\psi = 0}$.

   The image of this embedding is a $\calH(\Gamma)$-submodule, which is finitely-generated, since $(\vp^*\NN(V))^{\psi=0}$ is finitely-generated as a $\Lambda_E$-module \cite[Theorem~3.5]{leiloefflerzerbes10}. So it is closed. On the other hand, the image contains $(\vp^* \NN(V))^{\psi = 0}$. Since we evidently have $\vp^* \Nrig(V) = \vp^* \NN(V) \otimes_{\vp(\BB_{\Qp}^+)} \vp(\Brig)$, and $\vp(\BB_{\Qp}^+)$ is dense in $\vp(\Brig)$, it follows that the image of the right-hand side is a submodule of the left-hand side which is both dense and closed; hence it is everything.
  \end{proof}
  
  We recall the following result from our previous work:

  \begin{theorem}[{\cite[Lemma~3.15]{leiloefflerzerbes10}}]\label{theorem:basis}
   $(\vp^* \NN(V))^{\psi = 0}$ is a free $\Lambda_E(\Gamma)$-module of rank $d$. More specifically, for any basis $\nu_1, \dots, \nu_d$ of $\Dcris(V)$, there exists a $E \otimes \BB^+_{\Qp}$-basis $n_1, \dots, n_d$ of $\NN(V)$ such that $n_i = \nu_i \bmod \pi$ and $(1+\pi)\vp(n_1),\ldots,(1+\pi)\vp(n_d)$ form a $\Lambda_E(\Gamma)$-basis of $(\vp^* \NN(V))^{\psi = 0}$.
  \end{theorem}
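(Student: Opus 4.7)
The plan is to construct the basis $n_1,\dots,n_d$ of $\NN(V)$ lifting the given basis $\nu_1,\dots,\nu_d$ of $\Dcris(V)$, and then reduce the claim about the generators $(1+\pi)\vp(n_i)$ to Fontaine's classical result that $(\BB^+_{\Qp})^{\psi=0}$ is a free rank-one $\Lambda_{\Qp}(\Gamma)$-module generated by $1+\pi$ (equivalent to the Mellin isomorphism $\Lambda_{\Qp}(\Gamma) \cong (\BB^+_{\Qp})^{\psi=0}$ of \S 1.3.2).

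For the existence of the $n_i$: the Wach module $\NN(V)$ is free of rank $d$ over $E\otimes\BB^+_{\Qp}$, with $\NN(V)/\pi\NN(V) \cong \Dcris(V)$ as a filtered $\vp$-module. Starting from any basis $m_1,\dots,m_d$ of $\NN(V)$, let $A \in \GL_d(E)$ be the matrix expressing each $\nu_i$ in terms of the reductions $m_i \bmod \pi$; lift $A$ to $\widetilde A \in \GL_d(E \otimes \BB^+_{\Qp})$ using surjectivity of reduction mod $\pi$ on $\GL_d$, and set $n_i := \sum_j \widetilde A_{ij} m_j$.

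For the basis claim: first, $\vp(n_1),\dots,\vp(n_d)$ form an $E\otimes\BB^+_{\Qp}$-basis of $\vp^*\NN(V)$, since they generate by definition and are linearly independent because $\vp$ becomes an isomorphism after inverting $\pi$. The identity $\psi(g\,\vp(n)) = \psi(g)\,n$ for $g \in \BB^+_{\Qp}$ and $n \in \NN(V)$ (immediate by decomposing $g = \sum_{j=0}^{p-1}(1+\pi)^j\vp(g_j)$) then shows that any $x = \sum_i f_i\vp(n_i) \in \vp^*\NN(V)$ satisfies $\psi(x) = \sum_i \psi(f_i)\,n_i$, which vanishes iff each $f_i \in (E\otimes\BB^+_{\Qp})^{\psi=0}$. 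Combining with Fontaine's result yields the $\BB^+_{\Qp}$-module decomposition
\[
(\vp^*\NN(V))^{\psi=0} \;=\; \bigoplus_{i=1}^d \Lambda_E(\Gamma)\cdot(1+\pi)\vp(n_i),
\]
which immediately pins down the rank.

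The main obstacle I anticipate is reconciling this ``product'' $\Lambda$-structure on the direct sum (where each $\Lambda_E(\Gamma)$ acts only on the scalar $(1+\pi)$) with the \emph{intrinsic} $\Lambda_E(\Gamma)$-action inherited from the $\Gamma$-action on all of $(\vp^*\NN(V))^{\psi=0}$, since the latter moves between summands via $\gamma(\vp(n_i)) = \sum_j \vp(B_{ij}(\gamma))\,\vp(n_j)$. However, because $\Gamma$ acts trivially on $\Dcris(V) = \NN(V)/\pi\NN(V)$, we have $\gamma n_i \equiv n_i \pmod{\pi\NN(V)}$, so the matrix $(B_{ij}(\gamma))$ is congruent to $I$ modulo $\pi$, and hence $(\vp(B_{ij}(\gamma)))$ is congruent to $I$ modulo $\vp(\pi)$. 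The change-of-basis between the two $\Lambda$-structures is therefore unipotent with respect to the $\vp(\pi)$-adic filtration, and a successive-approximation argument along this filtration, using the $\pi$-adic completeness of $\NN(V)$, confirms that the $\Lambda_E(\Gamma)$-linear map $\Lambda_E(\Gamma)^d \to (\vp^*\NN(V))^{\psi=0}$ sending $e_i \mapsto (1+\pi)\vp(n_i)$ (under the intrinsic action) is an isomorphism.
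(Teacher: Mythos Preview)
The paper does not give its own proof of this statement; it is quoted from \cite[Lemma~3.15]{leiloefflerzerbes10} (with the result attributed there to Berger), so there is no proof in the paper to compare against directly. I will therefore comment on the correctness of your argument on its own terms.

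Your steps (1)--(4) are fine and constitute the standard reduction: lifting the $\nu_i$ to a basis $n_1,\dots,n_d$ of $\NN(V)$ is immediate, the $\vp(n_i)$ form an $E\otimes\BB^+_{\Qp}$-basis of $\vp^*\NN(V)$, and the identity $\psi(g\vp(n))=\psi(g)n$ gives
\[
(\vp^*\NN(V))^{\psi=0}=\bigoplus_{i=1}^d (E\otimes\BB^+_{\Qp})^{\psi=0}\,\vp(n_i),
\]
which, via the Mellin transform, identifies each summand with $\Lambda_E(\Gamma)\cdot(1+\pi)$ under the action on the scalar alone.

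The gap is in step (5). You invoke ``the $\pi$-adic completeness of $\NN(V)$'' to run a successive-approximation argument, but $\NN(V)=\Qp\otimes_{\Zp}\NN(T)$ is \emph{not} $\pi$-adically complete: for instance the sequence $f_m=\sum_{n=0}^m p^{-n}\pi^n$ is $\pi$-adically Cauchy in $\BB^+_{\Qp}$ but has no limit there. Correspondingly, $\Lambda_E(\Gamma)$ is not complete for the $(\gamma-1)$-adic topology, so the series of corrections $\sum_k \mu_i^{(k)}$ your iteration produces (each $\mu_i^{(k)}$ lying in $(\gamma_{p+1}-1)^k\Lambda_E(\Gamma)$, where $\chi(\gamma_{p+1})=p+1$, since multiplication by $\vp(\pi)$ on $(\BB^+_{\Qp})^{\psi=0}$ corresponds to $\gamma_{p+1}-1$ under Mellin) need not converge in $\Lambda_E(\Gamma)$.

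The fix is to run the entire approximation integrally. Choose the lift $n_1,\dots,n_d$ to be an $\calO_E\otimes\AA^+_{\Qp}$-basis of $\NN(T)$ for some lattice $T\subset V$ (possible since any $E$-basis of $\Dcris(V)$ is, up to a matrix in $\GL_d(E)$, an $\calO_E$-basis of $\NN(T)/\pi\NN(T)$ for a suitable $T$). Then every object in your iteration lives in $(\vp^*\NN(T))^{\psi=0}$ and the corrections lie in $\Lambda_{\calO_E}(\Gamma)$, which \emph{is} complete for the $(\varpi_E,\gamma-1)$-adic topology; the series converges and the map $\Lambda_{\calO_E}(\Gamma)^d\to(\vp^*\NN(T))^{\psi=0}$ is surjective. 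Injectivity then follows because both sides are torsion-free of the same $\Lambda_{\calO_E}(\Gamma)$-rank. Tensoring with $\Qp$ gives the statement for $V$. With this modification your argument goes through and, as far as one can tell from the citation, follows the same line as Berger's original proof.
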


  Combining this with Proposition~\ref{prop:HGN}, the following corollary is immediate:

  \begin{corollary}\label{Hmodulestructure}
   $(\vp^* \Nrig(V))^{\psi = 0}$ is a free $E\otimes\calH(\Gamma)$-module of rank $d$. More specifically, for any basis $\nu_1, \dots, \nu_d$ of $\Dcris(V)$, there exists a $E \otimes \Brig$-basis $n_1, \dots, n_d$ of $\Nrig(V)$ such that $n_i = \nu_i \bmod \pi$ and $(1+\pi)\vp(n_1),\ldots,(1+\pi)\vp(n_d)$ form a $E \otimes \calH(\Gamma)$-basis of $(\vp^* \Nrig(V))^{\psi = 0}$.
  \end{corollary}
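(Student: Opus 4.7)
The plan is to assemble the statement directly from the two results immediately preceding it, namely Theorem~\ref{theorem:basis} (which handles the integral/bounded version) and Proposition~\ref{prop:HGN} (which transfers the structural statement from $\Lambda_{\Qp}(\Gamma)$-modules to $\calH(\Gamma)$-modules).

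First I would invoke Theorem~\ref{theorem:basis}: given a $\Qp$-basis $\nu_1,\dots,\nu_d$ of $\Dcris(V)$, it produces an $E\otimes\BB_{\Qp}^+$-basis $n_1,\dots,n_d$ of $\NN(V)$ with $n_i\equiv\nu_i\pmod{\pi}$ such that the elements $(1+\pi)\vp(n_1),\dots,(1+\pi)\vp(n_d)$ form a $\Lambda_E(\Gamma)$-basis of $(\vp^*\NN(V))^{\psi=0}$. Next, since $\Nrig(V) = \NN(V)\otimes_{\BB_{\Qp}^+}\Brig$ by definition, the same tuple $n_1,\dots,n_d$ is automatically an $E\otimes\Brig$-basis of $\Nrig(V)$, and the congruence $n_i\equiv\nu_i\pmod\pi$ is preserved.

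Finally I would apply Proposition~\ref{prop:HGN}, which gives the canonical isomorphism
\[
(\vp^*\Nrig(V))^{\psi=0}\;\cong\;\calH(\Gamma)\otimes_{\Lambda_{\Qp}(\Gamma)}(\vp^*\NN(V))^{\psi=0}.
\]
Base-extending the $\Lambda_E(\Gamma)$-basis $\{(1+\pi)\vp(n_i)\}_{i=1}^d$ along this isomorphism produces an $E\otimes\calH(\Gamma)$-basis of $(\vp^*\Nrig(V))^{\psi=0}$ consisting of the same elements (viewed now inside $\Nrig(V)$), establishing both freeness of rank $d$ and the explicit form of the basis.

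There is no real obstacle here: the only slightly delicate point is making sure one is entitled to identify the elements $(1+\pi)\vp(n_i)$ on the two sides, but this is precisely the content of the natural embedding constructed in the proof of Proposition~\ref{prop:HGN}, under which the generators of $(\vp^*\NN(V))^{\psi=0}$ inside $\Dcris(V)\otimes(\BB_{\Qp}^+)^{\psi=0}$ map to the same elements viewed inside $\Dcris(V)\otimes(\Brig)^{\psi=0}$.
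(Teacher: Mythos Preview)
Your proposal is correct and follows exactly the same approach as the paper, which simply states that the corollary is immediate from combining Theorem~\ref{theorem:basis} with Proposition~\ref{prop:HGN}. You have merely spelled out in slightly more detail the two steps (lifting the $\NN(V)$-basis to an $\Nrig(V)$-basis via base change, and then base-extending along the isomorphism of Proposition~\ref{prop:HGN}) that the paper leaves implicit.
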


  \begin{remark} 
   It seems reasonable to conjecture that for \emph{any} $E \otimes \Brig$-basis $m_1, \dots, m_d$ of $\Nrig(V)$, the vectors $(1 + \pi)\vp(m_i)$ are a $E \otimes \calH(\Gamma)$-basis of $(\vp^* \Nrig(V))^{\psi = 0}$, and similarly for $\NN(V)$; but we do not know a proof of this statement.
  \end{remark}


\section{The construction of Coleman maps}\label{review}

 \subsection{Coleman maps and the Perrin-Riou \texorpdfstring{$p$}{p}-adic regulator}\label{regulator}

  Let $E$ be a finite extension of $\Qp$. Let $V$ be a $d$-dimensional $E$-linear representation of $G_{\Qp}$ with non-negative Hodge-Tate weights $r_1 \le r_2 \le \dots \le r_d$. We assume that $V$ has no quotient isomorphic to the trivial representation. Let $T$ be a $\calG_{\Qp}$-stable $\calO_E$-lattice in $V$. Under these assumptions, there is a canonical isomorphism of $\Lambda_{\calO_E}(\Gamma)$-modules
  \[ h^1_{\Iw}: \NN(T)^{\psi = 1} \rTo^{\cong} H^1_{\Iw}(\Qp, T). \]
  by \cite[Theorem A.3]{berger03}. Moreover, since the Hodge-Tate weights of $V$ are non-negative, we have $\NN(T) \subseteq \vp^* \NN(T)$ by Lemma~\ref{lemma:vpstar}. Hence there is a well-defined map $1 - \vp: \NN(T) \to \vp^* \NN(T)$, which maps $\NN(T)^{\psi = 1}$ to $(\vp^* \NN(T))^{\psi = 0}$. 

  As we recalled above, \cite[Theorem 3.5]{leiloefflerzerbes10} (due to Laurent Berger) shows that for some basis $n_1, \dots, n_d$ of $\NN(T)$ as an $\EA$-module, the vectors $(1 + \pi) \vp(n_1), \dots, (1 + \pi) \vp(n_d)$ form a basis of $(\vp^*\NN(T))^{\psi=0}$ as a $\Lambda_{\calO_E}(\Gamma)$-module. This basis gives an isomorphism
  \[
  \J: (\vp^*\NN(T))^{\psi=0}\rTo^\cong \Lambda_{\calO_E}(\Gamma)^{\oplus d}
  \]
  (the \emph{Iwasawa transform}), and we define the Coleman map 
  \[ \uCol = (\uCol_i)_{i = 1}^d : \NN(T)^{\psi=1}\rightarrow \Lambda_{\calO_E}(\Gamma)^{\oplus d}\]
  as the composition $\J \circ (1 - \vp)$. 

  \begin{remark}
   This direct definition of the Coleman map is equivalent to that given in our earlier work, but applies to any representation with non-negative Hodge-Tate weights, rather than starting with a positive representation and twisting by the sum of its Hodge-Tate weights as in \cite{leiloefflerzerbes10}.
  \end{remark}


  Let $\nu_1, \dots, \nu_d$ be a basis of $\Dcris(V)$, so $(1 + \pi) \otimes \nu_1, \dots, (1 + \pi) \otimes \nu_d$ are a basis of $(\Brig)^{\psi = 0} \otimes \Dcris(V)$ as an $\calH(\Gamma)$-module; and $n_1, \dots, n_d$ be a basis of $\NN(V)$ lifting $\nu_1, \dots, \nu_d$ as in Theorem~\ref{theorem:basis}. Then there exists a unique $d\times d$ matrix $\underline{M}$ with entries in $\calH(\Gamma)$ such that
  \begin{equation}\label{eq:defnM}
   \begin{pmatrix} (1 + \pi) \vp(n_1) \\ \vdots \\ (1 + \pi) \vp(n_d) \end{pmatrix} = \underline{M} \cdot \begin{pmatrix} (1 + \pi)\otimes \nu_1 \\ \vdots \\ (1 + \pi) \otimes \nu_d \end{pmatrix}.
  \end{equation}
  In fact $\underline{M}$ is defined over $\calH(\Gamma_1)$, since the $n_i$ lie in $(1 + \pi) \vp(\NN(V)) \subseteq (1 + \pi) \vp(\Brig) \otimes \Dcris(V)$. By Theorem~\ref{theorem:HGelem}, we know that the elementary divisors of $\underline{M}$ are $\mathfrak{n}_{r_1}, \dots, \mathfrak{n}_{r_d}$.

  \begin{corollary}\label{detuM}
   Up to a unit, $\det(\underline{M})$ is equal to $\prod_{i=1}^{d}\mathfrak{n}_{r_i}$.
  \end{corollary}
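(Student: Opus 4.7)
The corollary should follow in a single elementary-divisors step once Theorem~\ref{theorem:HGelem} and Corollary~\ref{Hmodulestructure} are in hand, and that is the approach I would take.

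The plan is to read off $\underline{M}$ as the matrix of the inclusion
\[ (\vp^*\Nrig(V))^{\psi=0} \hookrightarrow \Dcris(V) \otimes_{\Qp} (\Brig)^{\psi=0} \]
with respect to two explicit free $E\otimes \calH(\Gamma)$-bases of rank $d$. By Corollary~\ref{Hmodulestructure}, the vectors $(1+\pi)\vp(n_1),\dots,(1+\pi)\vp(n_d)$ form a basis of the source, while $(1+\pi)\otimes \nu_1,\dots,(1+\pi)\otimes \nu_d$ form the standard basis of the target (via the Mellin transform). The defining relation \eqref{eq:defnM} then says exactly that $\underline{M}$ is the matrix of this inclusion in these bases.

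Next, I would invoke the elementary-divisor theory recalled at the end of Section~1: the ring $E\otimes\calH(\Gamma)$ decomposes as the direct sum $\bigoplus_\eta e_\eta (E\otimes\calH(\Gamma))$ of rings each isomorphic (as an abstract ring) to $\calH(\Gamma_1)\cong \Brig$, which is a B\'ezout ring and hence admits Smith normal form. For any injection of free modules of equal rank over such a ring, the determinant of the matrix equals, up to a unit, the product of the elementary divisors of the cokernel. Applying this componentwise on each $\eta$-isotypical piece (or more uniformly over $E\otimes\calH(\Gamma)$) yields
\[ \det(\underline{M}) \;\sim\; \prod_{i=1}^{d} \mathfrak{n}_{r_i} \]
up to a unit, where the elementary divisors $\mathfrak{n}_{r_1},\dots,\mathfrak{n}_{r_d}$ are supplied by Theorem~\ref{theorem:HGelem}.

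There is essentially no obstacle here; the only point that deserves a sentence is that although $\underline{M}$ a priori has entries in $\calH(\Gamma_1)$, reading the determinant inside the larger ring $E\otimes \calH(\Gamma)$ does not change the answer (the inclusion $\calH(\Gamma_1)\hookrightarrow \calH(\Gamma)$ is an inclusion of rings and units go to units on each isotypical component). Everything else is bookkeeping with the two bases.
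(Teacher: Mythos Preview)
Your proposal is correct and is exactly the paper's approach: the paper observes just before Corollary~\ref{detuM} that, by Theorem~\ref{theorem:HGelem}, the elementary divisors of $\underline{M}$ are $\mathfrak{n}_{r_1},\dots,\mathfrak{n}_{r_d}$, and the corollary is then immediate since the determinant equals the product of the elementary divisors up to a unit. Your write-up simply spells out this step in more detail than the paper does.
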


  We can write the Coleman map $\uCol$ in terms of $\underline{M}$ as follows: 

  \begin{lemma}\label{colemanregulator}
   For $x\in\NN(T)^{\psi=1}$, we have
   \[
    (1-\vp)(x)= \uCol(x) \cdot \underline{M} \cdot \begin{pmatrix}(1 + \pi)\otimes \nu_{1}\\ \vdots \\ (1 + \pi)\otimes \nu_{d}\end{pmatrix}.
   \]
  \end{lemma}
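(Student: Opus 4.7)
The plan is to unwind the definitions. By construction, $\uCol = \J \circ (1-\vp)$, where $\J$ is the Iwasawa transform associated to the basis $(1+\pi)\vp(n_1),\dots,(1+\pi)\vp(n_d)$ of $(\vp^*\NN(T))^{\psi=0}$ over $\Lambda_{\calO_E}(\Gamma)$. Since $x \in \NN(T)^{\psi = 1}$ and the Hodge-Tate weights of $V$ are non-negative, Lemma~\ref{lemma:vpstar} guarantees that $(1-\vp)(x) \in (\vp^*\NN(T))^{\psi=0}$, so the expansion in this basis makes sense.

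First I would write
\[
(1-\vp)(x) = \sum_{i=1}^{d} \uCol_i(x)\cdot (1+\pi)\vp(n_i),
\]
which is nothing more than the definition of $\uCol = (\uCol_i)_i$ applied to $x$. Written as a matrix product, this reads
\[
(1-\vp)(x) = \uCol(x) \cdot \begin{pmatrix} (1+\pi)\vp(n_1)\\ \vdots \\ (1+\pi)\vp(n_d)\end{pmatrix}.
\]

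Finally I would substitute the defining equation (\ref{eq:defnM}) of the matrix $\underline{M}$, which expresses the column $((1+\pi)\vp(n_i))_i$ as $\underline{M}$ times the column $((1+\pi)\otimes\nu_i)_i$, and this yields the stated formula. There is no real obstacle here; the only thing to check is that the associativity of the row–matrix–column product is legitimate, which follows because all coefficients live in the commutative ring $\calH(\Gamma)$ and $\underline{M}$ has entries there (in fact in $\calH(\Gamma_1)$, as noted just before the lemma).
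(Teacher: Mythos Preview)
Your argument is correct and is essentially identical to the paper's own proof: both simply expand $(1-\vp)(x)$ in the basis $(1+\pi)\vp(n_i)$ via the definition of $\uCol$, and then substitute the defining relation \eqref{eq:defnM} for $\underline{M}$.
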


  \begin{proof}
   We have by definition
   \[
   (1-\vp)x=\uCol(x) \cdot \begin{pmatrix}(1+\pi)\vp(n_1) \\ \vdots \\ (1+\pi)\vp(n_d) \end{pmatrix}.
   \]
   Therefore, we are done on combining this with \eqref{eq:defnM}.
  \end{proof}

  \begin{definition}\label{definitionregulator}
   The \emph{Perrin-Riou $p$-adic regulator} $\LL_{V}$ for $V$ is defined to be the $\Lambda_E(\Gamma)$-homomorphism
   \[
   \left(\mathfrak{M}^{-1}\otimes1\right)\circ(1-\vp)\circ \left(h^1_{\Iw,V}\right)^{-1}:H^1_{\Iw}(\Qp,V)\rTo \calH(\Gamma)\otimes\Dcris(V).
   \]
  \end{definition}

  Using the isomorphism $h^1_{\Iw, V} : \NN(V)^{\psi = 1} \to H^1_{\Iw}(\Qp,V)$, we can thus rewrite Lemma~\ref{colemanregulator} as 
  \begin{equation}\label{eq:LCol}
  \LL_{V}=\left(\uCol\circ (h^1_{\Iw,V})^{-1}\right)(z) \cdot \underline{M} \cdot \begin{pmatrix} \nu_{1}\\ \vdots \\ \nu_{d} \end{pmatrix}.
  \end{equation}


\section{Images of the Coleman maps}\label{imcol}

 Let $\eta$ be a character on $\Delta$. In this section, we study the image of $\uCol^\eta(\NN(V)^{\psi=1})$ as a subset of $\Lambda_E(\Gamma_1)^{\oplus d}$ for a crystalline representation $V$ of dimension $d$ with non-negative Hodge-Tate weights. We then consider the projection of this image, giving a description of $\image(\uCol_i^\eta)$ for $i=1,\ldots,d$.


 \subsection{Preliminary results on \texorpdfstring{$\Lambda_E(\Gamma_1)$}{LambdaE(Gamma1)}-modules}

  Recall that we identify $\Lambda_E(\Gamma_1)$ with the power series ring $E\otimes\calO_E[[X]]$ by identifying $\gamma-1$ with $X$. Therefore, if $F\in\Lambda_E(\Gamma_1)$ and $x$ is an element of the maximal ideal of $E$, $F|_{X=x}\in E$.

  \begin{lemma}\label{base}
   Let $V$ be an $E$-subspace of $E^d$ with codimension $n$. For a fixed element $x$ of the maximal ideal of $E$, we define the $\Lambda_E(\Gamma_1)$-module
   \[
   S=\left\{(F_1,\ldots,F_d)\in\Lambda_E(\Gamma_1)^{\oplus d}:(F_1(x),\ldots,F_d(x))\in V\right\}.
   \]
   Then, $S$ is free of rank $d$ over $\Lambda_E(\Gamma_1)$ and $\det(S)=(X-x)^n$.
  \end{lemma}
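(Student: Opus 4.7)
The plan is to use the fact that $\Lambda_E(\Gamma_1) \cong E[[X]]$ is a discrete valuation ring (in particular, a PID that admits the theory of elementary divisors), and reduce to a basis of $E^d$ adapted to $V$.

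First I would choose a basis $v_1, \dots, v_d$ of $E^d$ so that $V = \mathrm{span}_E(v_1, \dots, v_{d-n})$. Since the change-of-basis matrix from the standard basis to $\{v_i\}$ lies in $\mathrm{GL}_d(E) \subseteq \mathrm{GL}_d(\Lambda_E(\Gamma_1))$, applying it to $\Lambda_E(\Gamma_1)^{\oplus d}$ is an automorphism of unit determinant; so without loss of generality we may assume that $V$ is the coordinate subspace $E^{d-n} \oplus 0$.

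In this adapted basis, the condition defining $S$ becomes simply $F_{d-n+1}(x) = \cdots = F_d(x) = 0$, i.e.
\[
S \;=\; \Lambda_E(\Gamma_1)^{\oplus (d-n)} \;\oplus\; \bigl((X-x)\Lambda_E(\Gamma_1)\bigr)^{\oplus n}.
\]
This is manifestly free of rank $d$, with explicit $\Lambda_E(\Gamma_1)$-basis $e_1, \dots, e_{d-n}, (X-x)e_{d-n+1}, \dots, (X-x)e_d$. The matrix expressing this basis in terms of the standard basis of $\Lambda_E(\Gamma_1)^{\oplus d}$ is diagonal with $(d-n)$ ones followed by $n$ copies of $(X-x)$, so $\det(S) = (X-x)^n$ up to a unit, as required.

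Alternatively, one could argue more structurally (avoiding the change of basis) by observing that evaluation at $X=x$ gives a short exact sequence
\[
0 \to S \to \Lambda_E(\Gamma_1)^{\oplus d} \to E^d / V \to 0,
\]
where $E^d/V \cong \bigl(\Lambda_E(\Gamma_1)/(X-x)\bigr)^{\oplus n}$ as a $\Lambda_E(\Gamma_1)$-module. Since $\Lambda_E(\Gamma_1)$ is a PID, the structure theorem gives that $S$ is free of rank $d$ with elementary divisors $(1, \dots, 1, X-x, \dots, X-x)$, yielding $\det(S) = (X-x)^n$. Either approach is routine; there is no real obstacle, as the statement is a purely linear-algebraic fact about submodules of a free module over a DVR.
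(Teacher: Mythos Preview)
Your proof is correct and essentially identical to the paper's: pick a basis of $E^d$ adapted to $V$, use the resulting element of $\mathrm{GL}_d(E)\subseteq\mathrm{GL}_d(\Lambda_E(\Gamma_1))$ to reduce to the coordinate subspace case, and read off $S=\Lambda_E(\Gamma_1)^{\oplus(d-n)}\oplus\bigl((X-x)\Lambda_E(\Gamma_1)\bigr)^{\oplus n}$. One small inaccuracy: $\Lambda_E(\Gamma_1)=\Qp\otimes_{\Zp}\calO_E[[X]]$ is a PID (via Weierstrass preparation) but not a DVR, as it has infinitely many maximal ideals; since your argument only uses the explicit change of basis (and your alternative argument only uses the structure theorem over a PID), this does not affect anything.
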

  \begin{proof}
   Let $v_1,\ldots,v_{d}$ be a basis of $E$ such that $\sum_{i=1}^de_iv_i\in V$ if and only if $e_i=0$ for all $i>d-n$. On multiplying elementary matrices in $GL_d(E)$ if necessary, we may assume that $S$ is of the form
   \begin{align*}
    S & =\left\{(F_1,\ldots,F_d)\in\Lambda_E(\Gamma_1)^{\oplus d}:F_{d-n+1}(x)=\cdots=F_d(x)=0\right\}\\
      & =\Lambda_E(\Gamma_1)^{\oplus(d-n)}\oplus\big((X-x)\Lambda_E(\Gamma_1)\big)^{\oplus n},
   \end{align*}
   so we are done.
  \end{proof}

  \begin{proposition}\label{general}
   Let $I=\{x_0,\ldots,x_m\}$ be a subset of the maximal ideal of $E$. For each $i$, let $V_i$ be an $E$-subspace of $E^{\oplus d}$ with codimension $n_i$. Define
   \[
   S=\left\{(F_1,\ldots,F_d)\in\Lambda_E(\Gamma_1)^{\oplus d}:\big(F_1(x_i),\ldots,F_d(x_i)\big)\in V_i, i=0,\ldots,m\right\},
   \]
   then $S$ is free of rank $d$ over $\Lambda_E(\Gamma_1)$, and $\det(S)=\prod_{i=0}^{m}(X-x_i)^{n_i}$.
  \end{proposition}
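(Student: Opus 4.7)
The plan is to proceed by induction on $m$, using Lemma~\ref{base} as the base case $m = 0$. For the inductive step, let $S'$ denote the module defined by imposing only the conditions at $x_0, \ldots, x_{m-1}$, so that $S \subseteq S'$ is cut out inside $S'$ by the single additional condition at $x_m$. By hypothesis, $S'$ is free of rank $d$ with $\det(S') = \prod_{i=0}^{m-1}(X - x_i)^{n_i}$.

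The first key step is to choose a $\Lambda_E(\Gamma_1)$-basis $F_1, \ldots, F_d$ of $S'$ and to observe that the evaluation map $\mathrm{ev}_{x_m} : S' \to E^{\oplus d}$ is surjective. Indeed, if we view $F_1, \ldots, F_d$ as the rows of a $d \times d$ matrix over $\Lambda_E(\Gamma_1)$, its determinant equals $\det(S')$ up to a unit, and evaluating at $X = x_m$ yields $\prod_{i < m}(x_m - x_i)^{n_i}$ times a unit, which is nonzero because the $x_i$ are pairwise distinct. Hence $F_1(x_m), \ldots, F_d(x_m)$ form an $E$-basis of $E^{\oplus d}$.

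Next, choose an $E$-basis $v_1, \ldots, v_d$ of $E^{\oplus d}$ such that $v_1, \ldots, v_{d - n_m}$ span $V_m$, and let $A \in GL_d(E) \subseteq GL_d(\Lambda_E(\Gamma_1))$ be the matrix sending $F_i(x_m)$ to $v_i$. Applying $A$ to $(F_1, \ldots, F_d)^T$ yields a new $\Lambda_E(\Gamma_1)$-basis $F'_1, \ldots, F'_d$ of $S'$ satisfying $F'_i(x_m) = v_i$. A general element $\sum G_i F'_i \in S'$ now lies in $S$ if and only if $G_{d - n_m + 1}(x_m) = \cdots = G_d(x_m) = 0$, i.e.\ $G_i \in (X - x_m)\Lambda_E(\Gamma_1)$ for $i > d - n_m$. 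Therefore
\[
S = \bigoplus_{i=1}^{d - n_m} \Lambda_E(\Gamma_1) F'_i \;\oplus\; \bigoplus_{i = d - n_m + 1}^{d} (X - x_m)\Lambda_E(\Gamma_1) F'_i,
\]
which is free of rank $d$ with $\det(S) = (X - x_m)^{n_m} \det(S') = \prod_{i=0}^{m}(X - x_i)^{n_i}$, completing the induction.

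There is no real obstacle: the only subtlety is the surjectivity of the evaluation map at $x_m$, which relies on the $x_i$ being distinct (as implicit in the set notation for $I$) and on the fact that elements of $E^\times$ are units in $\Lambda_E(\Gamma_1)$ so that the change-of-basis matrix $A$ is genuinely invertible in $GL_d(\Lambda_E(\Gamma_1))$.
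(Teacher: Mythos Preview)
Your proof is correct and follows essentially the same approach as the paper's: both proceed by induction on $m$, define $S'$ by the first $m$ conditions, use the non-vanishing of $\det(S')$ at $x_m$ to see that the evaluation map is invertible there, and then reduce the extra condition at $x_m$ to the situation of Lemma~\ref{base}. The only cosmetic difference is that the paper packages the last step by introducing an auxiliary module $S''$ and invoking Lemma~\ref{base} for it, whereas you inline that lemma by performing an explicit $GL_d(E)$ change of basis adapted to $V_m$.
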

  \begin{proof}
   We prove the result by induction on $m$. The case $m=0$ is just Lemma~\ref{base}.

   Assume that $m>0$ and let 
   \[
   S'=\left\{(F_1,\ldots,F_d)\in\Lambda_E(\Gamma_1)^{\oplus d}:(F_1(x_i),\ldots,F_d(x_i))\in V_i, i=0,\ldots,m-1\right\}.
   \]
   By induction, $S'$ is free of rank $d$ over $\Lambda_E(\Gamma_1)$ and $\det(S')=\prod_{i=0}^{m-1}(X-x_i)^{n_i}$. Let $F^{(i)}=\left(F_1^{(i)},\ldots,F_d^{(i)}\right)$, $i=1,\ldots,d$, be a $\Lambda_E(\Gamma_1)$-basis of $S'$. Write $\calF_m$ for the $d\times d$ matrix with entries $F_j^{(i)}(x_m)$. As $X-x_m$ does not divide $\det(F_j^{(i)})$, we have $\calF_m\in GL_d(E)$. Define
   \[
   S''=\left\{(G_1,\ldots,G_d)\in\Lambda_E(\Gamma_1)^{\oplus d}:(G_1(x_m),\ldots,G_d(x_m))\in V_m\calF_m^{-1}\right\}.
   \]
   By Lemma~\ref{base}, $S''$ is free of rank $d$ over $\Lambda_E(\Gamma_1)$ and $\det(S'')=(X-x_m)^{n_m}$. Say, $(G_1^{(k)},\ldots, G_d^{(k)})$, $k=1,\ldots,d$, is a basis. 

   For $(G_1,\ldots, G_d)\in \Lambda_E(\Gamma_1)^{\oplus d}$, we have $\sum_{i=1}^dG_iF^{(i)}\in S'\subset S$ by definition. It is easy to check that $\sum_{i=1}^dG_iF^{(i)}\in S$ if and only if $(G_1,\ldots, G_d)\in S''$. Therefore, a basis for $S$ is given by the row vectors of $(G_i^{(k)})(F_j^{(i)})$ and $\det(S)=\det(S')\det(S'')$. Hence, we are done.
  \end{proof}

  \begin{lemma}\label{projection}
   If $S$ is a $\Lambda_E(\Gamma_1)$-module as described in the statement of Proposition~\ref{general}, then the image of a projection from $S$ into $\Lambda_E(\Gamma_1)$ is of the form $\prod_{i\in J}(X-x_i)\Lambda_E(\Gamma_1)$ where $J$ is some subset of $\{0,\ldots,m\}$.
  \end{lemma}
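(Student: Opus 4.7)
The plan is to argue directly, rather than via the structural results in Proposition~\ref{general}, by using Lagrange interpolation to reverse-engineer preimages in $S$.

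Let $\pi_k:\Lambda_E(\Gamma_1)^{\oplus d}\to\Lambda_E(\Gamma_1)$ be projection onto the $k$-th coordinate (any fixed $k$). Since each $V_i$ is an $E$-subspace of $E^{\oplus d}$, its image under the $k$-th coordinate projection $E^{\oplus d}\to E$ is either all of $E$ or $\{0\}$. Define
\[
J = \{\, i \in \{0,\ldots,m\} : \text{$k$-th coordinate of every vector in } V_i \text{ is zero}\,\}.
\]
I claim $\image(\pi_k\vert_S) = \prod_{i\in J}(X-x_i)\,\Lambda_E(\Gamma_1)$.

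For the inclusion $\subseteq$: if $(F_1,\ldots,F_d)\in S$ and $i\in J$, then $F_k(x_i)=0$, so $X-x_i$ divides $F_k$ in $\Lambda_E(\Gamma_1)$ (a PID, since $\calO_E[[X]]$ is a $2$-dimensional regular local UFD and we invert $\varpi_E$). The elements $X-x_i$ for the distinct $x_i$ in the maximal ideal of $E$ are pairwise coprime primes of $\Lambda_E(\Gamma_1)$, so their product divides $F_k$.

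For the inclusion $\supseteq$: given $G\in\prod_{i\in J}(X-x_i)\,\Lambda_E(\Gamma_1)$, I will produce $(F_1,\ldots,F_d)\in S$ with $F_k=G$. For each $i\in\{0,\ldots,m\}$, the value $G(x_i)$ lies in the $k$-th coordinate projection of $V_i$ (either because $G(x_i)=0$ when $i\in J$, or because that projection equals $E$ when $i\notin J$); hence choose $v^{(i)}=(v^{(i)}_1,\ldots,v^{(i)}_d)\in V_i$ with $v^{(i)}_k=G(x_i)$. For $j\ne k$, set
\[
F_j = \sum_{i=0}^{m} v^{(i)}_j \prod_{i'\ne i}\frac{X-x_{i'}}{x_i-x_{i'}} \;\in\; E[X] \;\subseteq\; \Lambda_E(\Gamma_1),
\]
which is well-defined since the $x_i$ are distinct, and satisfies $F_j(x_i)=v^{(i)}_j$ for all $i$. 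Then $(F_1,\ldots,F_d)\in S$ with $\pi_k$-image equal to $G$.

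There is no real obstacle here. The only points worth stating carefully are (a) that a subspace of $E$ is either $0$ or $E$, which immediately forces $J$ to be the correct index set, and (b) that $\Lambda_E(\Gamma_1)$ is a PID in which the elements $X-x_i$ are pairwise coprime, so that the two divisibility conclusions combine into divisibility by the product. The construction of the other $F_j$'s via Lagrange interpolation works because we only need to prescribe finitely many values and polynomials already lie in $\Lambda_E(\Gamma_1)$.
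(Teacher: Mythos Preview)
Your proof is correct and follows essentially the same approach as the paper: both define $J$ as the set of indices $i$ for which the chosen coordinate projection kills $V_i$, both note the inclusion $\subseteq$ is immediate from the definition of $S$, and both use polynomial interpolation to produce an element of $S$ with prescribed coordinate. The only cosmetic difference is that the paper exhibits the single generator $\prod_{i\in J}(X-x_i)$ in the image (which suffices since the image is an ideal), whereas you lift an arbitrary $G$ directly.
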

  \begin{proof}
   We consider the first projection $\pr_1:(F_1,\ldots,F_d)\mapsto F_1$. Let 
   \[
   J=\{i\in[0,m]:(e_1,\ldots,e_d)\in V_i\Rightarrow e_1=0\}.
   \]
   It is clear that $\image(\pr_1)\subset\prod_{i\in J}(X-x_i)\Lambda_E(\Gamma_1)$. It remains to show that $\prod_{i\in J}(X-x_i)\in\image(\pr_1)$.

   By definition, for each $i\notin J$, there exist $e_k^{(i)}\in E$, $k=2,\ldots,d$, such that 
   \[
   \left(\prod_{j\in J}(x_i-x_j),e_2^{(i)},\ldots,e_d^{(i)}\right)\in V_i.
   \]
   Similarly, take any $(0,e_2^{(i)},\ldots,e_d^{(i)})\in V_i$ for $i\in J$. There exist polynomials $F_k$ over $E$ such that $F_k(x_i)=e_j^{(i)}$ for $k=2,\ldots,d$ and $i=0,\ldots,m$. It is then clear that 
   \[
   \left(\prod_{i\in J}(X-x_i),F_2,\ldots,F_d\right)\in S.
   \] 
   Hence we are done.
  \end{proof}


 \subsection{On the image of the Perrin-Riou \texorpdfstring{$p$}{p}-adic regulator}

  
  %
  
  Let $V$ be a $d$-dimensional $E$-linear crystalline representation of $\calG_{\Qp}$ with non-negative Hodge-Tate weights $r_1 \le \dots \le r_d$. 

  \begin{definition}\label{def:ni}
   For an integer $i \ge 0$, we write 
   \[ n_i=\dim_E\Fil^{-i}\Dcris(V) = \# \{j : r_j \le i\}.\]
  \end{definition}
  
  We make the following assumption:

  \begin{assumption}
   The eigenvalues of $\vp$ on $\Dcris(V)$ are not integral powers of $p$.
  \end{assumption}
  

  Recall that we have the Perrin-Riou exponential map (c.f. \cite{perrinriou94})
  \[
  \Omega_{V,r_d}:(\Brig)^{\psi=0}\otimes\Dcris(V)\rightarrow\calH(\Gamma)\otimes H^1_{\Iw}(\Qp,V).
  \]
  The Perrin-Riou $p$-adic regulator is related to $\Omega_{V,r_d}$ via the following equation.

  \begin{theorem}\label{bergerpr}
   As maps on $H^1_{\Iw}(\Qp,V)$, we have
   \[
   \LL_{V}=\left(\mathfrak{M}^{-1}\otimes1\right)\left(\prod_{i=0}^{r_d-1}\ell_i\right)\left(\Omega_{V,r_d}\right)^{-1}.
   \]
  \end{theorem}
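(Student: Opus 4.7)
The plan is to deduce Theorem~\ref{bergerpr} from Berger's explicit description of the Perrin-Riou exponential in terms of the Wach module, namely \cite[Theorem II.13]{berger03}, rather than attempting to reconstruct $\Omega_{V,r_d}$ from scratch via \cite{perrinriou94}.

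First I would unwind Definition~\ref{definitionregulator}: for $y \in \NN(V)^{\psi = 1}$ and $z = h^1_{\Iw, V}(y)$, we have $(\mathfrak{M} \otimes 1)(\LL_V(z)) = (1-\vp)(y)$ by definition. Clearing the Mellin transforms on both sides of the asserted identity and composing with $h^1_{\Iw, V}$ on the right, the claim becomes equivalent to the assertion that
\[
\Omega_{V, r_d}\bigl((1-\vp)(y)\bigr) = \Bigl(\prod_{i=0}^{r_d - 1} \ell_i\Bigr) \cdot h^1_{\Iw, V}(y)
\]
for all $y \in \NN(V)^{\psi = 1}$, where $\prod \ell_i$ acts through the $\calH(\Gamma)$-module structure on $\calH(\Gamma) \otimes H^1_{\Iw}(\Qp, V)$. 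The appearance of $r_d$ is legitimate: since $r_d$ is the largest Hodge-Tate weight of $V$, we have $\Fil^{-r_d}\Dcris(V) = \Dcris(V)$, so the Perrin-Riou exponential $\Omega_{V, h}$ is defined for $h = r_d$.

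I would then invoke \cite[Theorem II.13]{berger03}, which asserts exactly this commutativity (Berger in fact presents it as the definition/construction of $\Omega_{V,h}$ for crystalline representations). The main obstacle, such as it is, is the bookkeeping of conventions. Berger's theorem is phrased in terms of the differential operator $\nabla = (1+\pi)t\frac{d}{d\pi}$ and its shifts $\nabla - i$; these agree with the action of $\ell_i$ on $(\Brig)^{\psi = 0}$ under the Mellin transform, as was already observed in the proof of Proposition~\ref{prop:annihilator}. Similarly, I would need to check that the normalisation of $\Omega_{V,r_d}$ used here coincides with Perrin-Riou's (which is part of Berger's theorem), and that the sign convention for Hodge-Tate weights matches. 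With these identifications, the displayed identity above is literally the statement of Berger's result, and the theorem follows.
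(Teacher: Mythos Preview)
Your proposal is correct and follows essentially the same route as the paper: unwind Definition~\ref{definitionregulator} to reduce the claim to the identity $(1-\vp)\circ(h^1_{\Iw,V})^{-1} = \bigl(\prod_{i=0}^{r_d-1}\ell_i\bigr)(\Omega_{V,r_d})^{-1}$, then observe that this is precisely \cite[Theorem~II.13]{berger03}. Your extra remarks on the identification of $\ell_i$ with $\nabla - i$ and on the choice $h = r_d$ are helpful bookkeeping but do not change the argument.
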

  \begin{proof}
  By definition, this is the same as saying
  \[
   (1-\vp)\circ\left(h^1_{\Iw,V}\right)^{-1}=\left(\prod_{i=0}^{r_d-1}\ell_i\right)\left(\Omega_{V,r_d}\right)^{-1},
  \]
  which is just a rewrite of \cite[Theorem~II.13]{berger03}.  
  \end{proof}

  \begin{corollary}\label{padicdet}
   We have
   \[
   \det(\calL_{V})=\prod_{i=0}^{r_d-1}(\ell_i)^{d-n_i}.
   \]
  \end{corollary}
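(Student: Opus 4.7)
The plan is to take determinants of the functional identity established in Theorem~\ref{bergerpr}. Both the source $\calH(\Gamma)\otimes H^1_{\Iw}(\Qp,V)$ and the target $\calH(\Gamma)\otimes\Dcris(V)$ of $\LL_V$ are free $\calH(\Gamma)$-modules of rank $d$, as is $\calH(\Gamma)\otimes\Dcris(V)$ as the source of $\Omega_{V,r_d}$. Since the Mellin transform $\mathfrak{M}$ is a $\calH(\Gamma)$-linear isomorphism, it contributes only a unit to the determinant. Thus taking $\det$ of
\[
\LL_V=(\mathfrak{M}^{-1}\otimes 1)\left(\prod_{i=0}^{r_d-1}\ell_i\right)(\Omega_{V,r_d})^{-1}
\]
gives, up to a unit of $\Lambda_E(\Gamma)$,
\[
\det(\LL_V)\cdot\det(\Omega_{V,r_d})=\left(\prod_{i=0}^{r_d-1}\ell_i\right)^d,
\]
where the right-hand exponent $d$ is the rank of the module on which the scalar $\prod\ell_i$ acts.

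It then suffices to compute $\det(\Omega_{V,r_d})$. The key input is Colmez's proof of Perrin-Riou's $\delta_V$-conjecture in \cite[Theorem IX.4.4]{colmez98}; under the standing hypothesis that no eigenvalue of $\vp$ on $\Dcris(V)$ is an integral power of $p$ (which ensures $\Omega_{V,r_d}$ is non-degenerate and its determinant is well-defined modulo units), that result yields
\[
\det(\Omega_{V,r_d})=\prod_{i=0}^{r_d-1}\ell_i^{n_i}
\]
up to a unit in $\Lambda_E(\Gamma)$, where the exponent $n_i=\dim_E\Fil^{-i}\Dcris(V)$ records the jumps of the Hodge filtration below $-i$. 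Substituting into the determinant identity yields
\[
\det(\LL_V)=\prod_{i=0}^{r_d-1}\ell_i^{d-n_i},
\]
as required.

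The only real obstacle is bookkeeping with conventions: one must confirm that the exponent appearing in Colmez's formula, when translated through the sign convention for Hodge-Tate weights used in this paper (where $V$ has non-negative weights $r_1\le\cdots\le r_d$), is indeed $n_i$ rather than $d-n_i$. A useful internal consistency check comes from the alternative factorisation of $\LL_V$ through $\underline{M}$ given in Lemma~\ref{colemanregulator}: Corollary~\ref{detuM} together with the identity $\mathfrak{n}_k=(\log u)^k\delta_0\cdots\delta_{k-1}$ expresses $\det(\underline{M})$ as $\prod_{i=0}^{r_d-1}\delta_i^{d-n_i}$ up to a unit, so that the compatibility of the two expressions predicts $\det(\uCol\circ(h^1_{\Iw,V})^{-1})=\prod_{i=0}^{r_d-1}(X+1-u^i)^{d-n_i}$ — precisely the shape one would expect from the image description pursued in the next section.
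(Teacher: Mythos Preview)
Your proof is correct and follows essentially the same approach as the paper: take determinants in Theorem~\ref{bergerpr}, use Colmez's proof of the $\delta(V)$-conjecture to compute $\det(\Omega_{V,r_d})$, and divide. The paper writes the $\delta(V)$-formula as a product over all $i\le r_d-1$ and then observes $n_i=0$ for $i<0$, but this is cosmetic; your final consistency-check paragraph is a nice extra observation but not part of the paper's argument.
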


  \begin{proof}
   The $\delta(V)$-conjecture (see~\cite[Conjecture 3.4.7]{perrinriou94}) predicts that 
   \[
   \det(\Omega_{V,r_d})=\prod_{i \le r_d-1}(\ell_i)^{n_i}.
   \]
   As pointed out in \cite[Proposition 3.6.7]{perrinriou94}, this conjecture is a consequence of Perrin-Riou's explicit reciprocity law (Conjecture (R\'ec) in \emph{op.cit.}) which is proved in \cite[Th\'eor\`eme IX.4.5]{colmez98}.
   Therefore, Theorem~\ref{bergerpr} implies that
   \[
    \det(\calL_{V})=\left(\prod_{i=0}^{r_d-1}(\ell_i)^d\right)\left(\prod_{i\le r_d-1}(\ell_i)^{-n_i}\right),
   \]
   which finishes the proof, since $n_i = 0$ for $i < 0$.
  \end{proof}

  Let $z\in H^1_{\Iw}(\Qp,V)$. Then $\LL_V(z)\in\calH(\Gamma)\otimes_{\Qp}\Dcris(V)$, so we can apply to $\LL_V(z)$ any character on $\Gamma$ to obtain an element in $\Dcris(V)$. The following proposition studies elements obtained in this way when we choose characters of a specific kind. Recall that we denote by $\chi$ the cyclotomic character, and by $\chi_0$ the restriction of $\chi$ to $\Delta$.

  \begin{proposition}
   Let $z\in H^1_{\Iw}(\Qp,V)$. Then for any integer $0\le i\le r_d-1$ and any Dirichlet character $\delta$ of conductor $p^n > 1$, we have
   \begin{align}
    &(1-\vp)^{-1}\left(1-p^{-1}\vp^{-1}\right)\chi^i(\LL_{V}(z)\otimes t^{i}e_{-i})\in\Fil^{0}\Dcris(V(-i));\label{1strelation}\\
    &\vp^{-n}\left(\chi^i\delta(\LL_{V}(z)\otimes t^{i}e_{-i})\right)\in\Qpn\otimes\Fil^{0}\Dcris(V(-i)).\label{2ndrelation}
   \end{align}
  \end{proposition}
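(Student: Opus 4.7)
The plan is to recognise both (\ref{1strelation}) and (\ref{2ndrelation}) as consequences of Perrin-Riou's explicit reciprocity law (specifically the interpolation formulas for $\LL_V$, proved by Colmez in \cite[Th\'eor\`eme IX.4.5]{colmez98}). The defining property of the Bloch-Kato dual exponential is that its image lies in $\Fil^0\Dcris$, so once the left-hand sides are identified with dual-exponential values, both statements become immediate.

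First I would use Theorem~\ref{bergerpr} to write $\LL_V(z)=(\mathfrak{M}^{-1}\otimes 1)\bigl(\prod_{j=0}^{r_d-1}\ell_j\bigr)\mu$, where $\mu=\Omega_{V,r_d}^{-1}(z)\in(\Brig)^{\psi=0}\otimes\Dcris(V)$. For any $0\le i\le r_d-1$ we have $\chi^i(\ell_i)=0$ directly from the definition $\ell_i=\log(1+X)/\log_p(\chi(\gamma))-i$, so naive evaluation at $\chi^i$ kills the $\ell_i$ factor; what survives, according to the interpolation formula, is controlled by the image of $z$ under the corestriction $\pr_0\colon H^1_{\Iw}(\Qp,V)\to H^1(\Qp,V)$.

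Concretely, for (\ref{1strelation}) I would invoke the interpolation formula in the shape
\[
\chi^i\bigl(\LL_V(z)\otimes t^i e_{-i}\bigr)=(1-\vp)(1-p^{-1}\vp^{-1})^{-1}\exp^*_{\Qp,V(-i)}\bigl(\pr_0(z)\otimes e_{-i}\bigr).
\]
The hypothesis that no $\vp$-eigenvalue on $\Dcris(V)$ is an integer power of $p$ guarantees that $1-\vp$ is invertible on $\Dcris(V(-i))$, so multiplying both sides by $(1-\vp)^{-1}(1-p^{-1}\vp^{-1})$ transforms the left-hand side into exactly the expression in (\ref{1strelation}); the right-hand side is a dual-exponential value, hence lies in $\Fil^0\Dcris(V(-i))$ by definition.

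For (\ref{2ndrelation}), the analogous interpolation formula at the finite-order twist $\chi^i\delta$ (with $\delta$ of conductor $p^n>1$) gives, up to a Gauss sum $\tau(\delta^{-1})$ and combinatorial factor,
\[
\vp^{-n}\bigl(\chi^i\delta(\LL_V(z)\otimes t^i e_{-i})\bigr)\in\Qpn\cdot\exp^*_{\Qpn,V(-i)}\bigl(H^1(\Qpn,V(-i))\bigr).
\]
The Euler factor $(1-\vp)(1-p^{-1}\vp^{-1})^{-1}$ is absent here because $1-\delta(\gamma)\vp$ is already invertible once $\delta$ has nontrivial conductor, and $\vp^{-n}$ appears to match the level-$n$ normalisation of the Bloch-Kato map built from the embedding $\iota_n\colon\Brig\hookrightarrow\Qpn[[t]]$. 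Since $\exp^*_{\Qpn,V(-i)}$ takes values in $\Qpn\otimes\Fil^0\Dcris(V(-i))$, we obtain (\ref{2ndrelation}). The main obstacle is purely one of bookkeeping---pinning down the exact Euler and Gauss-sum constants in the two interpolation formulas; no new input beyond the explicit reciprocity law and the defining property of $\exp^*$ is required.
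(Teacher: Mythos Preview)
Your approach is correct but takes a different route from the paper. You directly invoke the interpolation formula expressing $\chi^i(\LL_V(z)\otimes t^ie_{-i})$ as an Euler factor times $\exp^*_{\Qp,V(-i)}$ applied to the twisted projection of $z$, and then use that $\exp^*$ lands in $\Fil^0$. The paper instead argues by duality: it pairs the left-hand side of (\ref{1strelation}) against an arbitrary test vector $x\in\Fil^0\Dcris(V^*(1+i))$ under the canonical pairing $\Dcris(V(-i))\times\Dcris(V^*(1+i))\to E$, then uses the explicit reciprocity law in its adjunction form to rewrite this as $\langle z,\Omega_{V^*(1),1}((1+\pi)\otimes x'_{-i})\rangle$ for a suitable $x'$. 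Projecting to level zero, the second entry becomes (a multiple of) $\exp_{\Qp,V^*(1+i)}$ applied to an element of $\Fil^0\Dcris(V^*(1+i))$, which vanishes because $\exp$ factors through the quotient by $\Fil^0$. Thus the pairing is zero for all such $x$, forcing the left-hand side into $\Fil^0$ by orthogonality. The analogous argument at conductor $p^n$ gives (\ref{2ndrelation}).

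Your direct route is conceptually cleaner and is in fact the viewpoint the paper itself adopts later for modular forms (see the lemma citing \cite[Lemma~3.5]{lei09}). The paper's duality argument, on the other hand, makes the role of the explicit reciprocity law as a statement about adjoint big exponentials more transparent, and avoids having to quote the $\exp^*$-interpolation formula as a black box---it derives the needed vanishing from the more primitive fact that $\exp$ kills $\Fil^0$ on the dual side. Either way the substantive input is the same; your acknowledged bookkeeping of Euler and Gauss-sum factors is the only thing standing between the sketch and a complete proof.
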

  \begin{proof}
   We write $[\hspace{1ex},\hspace{1ex}]$ for the pairing 
   \[
   \Dcris(V(-i))\times\Dcris(V^*(1+i))\rTo\Dcris(E(1))=E\cdot t^{-1}e_1.
   \]
   The orthogonal complement of $\Fil^{0}\Dcris(V(-i))$ under $[\hspace{1ex},\hspace{1ex}]$ is $\Fil^0(V^*(1+i))$. Let $x\in\Fil^0\Dcris(V^*(1+i))$ and $x'=(1-\vp)(1-p^{-1}\vp^{-1})^{-1}x$, and write $x'_{-i}$ for $x'\otimes t^ie_{-i}$. Then 
   \begin{align*}
    \big[(1-\vp)^{-1}\left(1-p^{-1}\vp^{-1}\right)\chi^i(\LL_{V}(z)\otimes t^{i}e_{-i}),x\big] & =\big[\chi^i(\LL_{V}(z)\otimes t^{i}e_{-i}),x'\big] \\
   & =\chi^i[\LL_{V}(z),x'_{-i}],
   \end{align*}
   where the first equality follows from the observation that $1-\vp$ and $1-p^{-1}\vp^{-1}$ are adjoint to each other under the pairing $[\hspace{1ex},\hspace{1ex}]$. 

   We extend $[\hspace{1ex},\hspace{1ex}]$ to a pairing on
   \[
   \calH(\Gamma)\otimes_{\Qp}\Dcris(V)\times\calH(\Gamma)\otimes_{\Qp}\Dcris(V^*(1))\rTo E\otimes_{\Qp}\calH(\Gamma)
   \]
   in the natural way. By Perrin-Riou's explicit reciprocity law (c.f. \cite[Th\'eor\`eme IX.4.5]{colmez98}) and Theorem~\ref{bergerpr}, we have
   \begin{equation}\label{1rec}
   \left[\LL_{V}(z),x'_{-i}\right]=(-1)^{r_d-1}\left\langle\left(\prod_{j=0}^{r_d-1}\ell_j\right)z,\Omega_{V^*(1),1-r_d}((1+\pi)\otimes x'_{-i})\right\rangle
   \end{equation}
   where $\langle\hspace{1ex},\hspace{1ex}\rangle$ denotes the pairing
   \[
   \left(\calH(\Gamma)\otimes H^1_{\Iw}(\Qp,V)\right)\times\left(\calH(\Gamma)\otimes H^1_{\Iw}(\Qp,V^*(1))\right)\rTo E\otimes\calH(\Gamma)
   \]
   as defined in \cite[\S~3.6]{perrinriou94}. By \cite[Lemme~3.6.1(i)]{perrinriou94}, the right-hand side of \eqref{1rec} in fact equals
   \begin{equation}\label{2rec}
    \left\langle z,\left(\prod_{j=0}^{r_d-1}\ell_{-j}\right)\Omega_{V^*(1),1-r_d}((1+\pi)\otimes x'_{-i})\right\rangle=\left\langle z,\Omega_{V^*(1),1}((1+\pi)\otimes x'_{-i})\right\rangle.
   \end{equation}

   By an abuse of notation, we let $\Tw$ denote the twist map on the $H^1_{\Iw}$'s as well as the map on $\calH(\Gamma)$ that sends any $g\in\Gamma$ to $\chi(g)g$. We have
   \[
   \langle\Tw^{-i}(x),\Tw^{i}(y)\rangle=\Tw^{i}\langle x,y\rangle
   \]
   for any $x$ and $y$ by \cite[Lemme~3.6.1(ii)]{perrinriou94}. Therefore, by combining \eqref{1rec} and \eqref{2rec}, $\chi^i[\LL_{V}(z),x'_{-i}]$ is equal to the projection of 
   \[
   \left\langle\Tw^{-i}(z),\Tw^i\left(\Omega_{V^*(1),1}((1+\pi)\otimes x'_{-i})\right)\right\rangle
   \]
   into $E$. The projection of $\Tw^i\left(\Omega_{V^*(1),1}((1+\pi)\otimes x'_{-i})\right)$ into $H^1(\Qp,V^*(1+i))$ is equal to a scalar multiple of
   \[
   \exp_{\Qp,V^*(1+i)}\left((1-p^{-1}\vp^{-1})(1-\vp)^{-1}(x')\right)
   \]
   (see for example \cite[Proposition~3.19]{leiloefflerzerbes10}). But
   \[
   (1-p^{-1}\vp^{-1})(1-\vp)^{-1}(x')=x\in \Fil^0\Dcris(V^*(1+i))
   \]
   by definition. Therefore, as $\exp_{\Qp,V^*(1+i)}$ factors through $\Fil^0\Dcris(V^*(1+i))$ by construction, it follows that 
   \[\exp_{\Qp,V^*(1+i)}\left((1-\vp)^{-1}(1-p^{-1}\vp^{-1})(x')\right)=0\] 
   and hence that 
   \[
   \big[(1-\vp)^{-1}\left(1-p^{-1}\vp^{-1}\right)\Tw^i(\LL_{V}(z)\otimes t^ie_{-i}),x\big]=\chi^i\big[\LL_{V}(z),x'_{-i}\big]=0.
   \]
   This implies \eqref{1strelation}, and \eqref{2ndrelation} can be proved similarly.
  \end{proof}

  For any character $\eta$ of $\Delta$ and an integer $0 \le i \le r_d-1$, define
  \[ V_{i,\eta}=
     \begin{cases}
     (1 - p^i \vp)(1-p^{-1-i}\vp^{-1})^{-1}\Fil^{-i}\Dcris(V) & \text{if $\chi_0^i=\eta$} \\
     \vp\big(\Fil^{-i}\Dcris(V)\big) & \text{otherwise}
     \end{cases} 
  \]
  Note that $V_{i,\eta}$ is a subspace of $\Dcris(V)$ of the same dimension as $\Fil^{-i}\Dcris(V)$.

  \begin{corollary}\label{twistrelations}
   If $\eta$ is a character on $\Delta$, then 
   \[
   \big\{\chi^i\chi_0^{-i}\eta(e_\eta\LL_{V}(z)):z\in H^1_{\Iw}(\Qp,V)\big\}\subset V_{i,\eta}.
   \]
  \end{corollary}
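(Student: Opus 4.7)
The plan is to deduce the corollary directly from the two relations \eqref{1strelation} and \eqref{2ndrelation} of the preceding proposition, handling the cases $\eta = \chi_0^i$ and $\eta \neq \chi_0^i$ separately. A preliminary simplification: since $(\chi^i\chi_0^{-i}\eta)|_\Delta = \eta$, we have $\chi^i\chi_0^{-i}\eta(e_\eta) = 1$, and therefore $\chi^i\chi_0^{-i}\eta(e_\eta \LL_V(z)) = \chi^i\chi_0^{-i}\eta(\LL_V(z))$; the idempotent may be dropped throughout.

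The common technical ingredient is the untwisting isomorphism $\Dcris(V(-i)) \cong \Dcris(V)$ via $x\otimes t^i e_{-i}\leftrightarrow x$. Since $\vp(t^i) = p^i t^i$, this identification conjugates the Frobenius on $\Dcris(V(-i))$ to $p^i\vp$ on $\Dcris(V)$; and a direct computation with the tensor product of filtrations (using that $\Qp(-i)$ contributes a single jump in its Hodge filtration at index $i$) shows that it carries $\Fil^{0}\Dcris(V(-i))$ onto $\Fil^{-i}\Dcris(V)$.

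In the case $\eta = \chi_0^i$ we have $\chi^i\chi_0^{-i}\eta = \chi^i$. Applying the untwisting to relation \eqref{1strelation} yields
\[
(1-p^i\vp)^{-1}(1-p^{-1-i}\vp^{-1})\chi^i(\LL_V(z)) \in \Fil^{-i}\Dcris(V);
\]
since the two rational functions of $\vp$ commute, rearranging gives $\chi^i(\LL_V(z)) \in (1-p^i\vp)(1-p^{-1-i}\vp^{-1})^{-1}\Fil^{-i}\Dcris(V) = V_{i,\chi_0^i}$. In the case $\eta \neq \chi_0^i$, set $\delta := \eta\chi_0^{-i}$; this is a non-trivial character of $\Delta$, hence a Dirichlet character of conductor $p$, and comparing restrictions to $\Delta$ and to $\Gamma_1$ gives $\chi^i\delta = \chi^i\chi_0^{-i}\eta$ as characters of $\Gamma$. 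Since $\delta$ takes values in $\mu_{p-1}\subset\Zp^\times$, applying \eqref{2ndrelation} with $n=1$ and untwisting produces
\[
p^{-i}\vp^{-1}\bigl(\chi^i\chi_0^{-i}\eta(\LL_V(z))\bigr) \in \Qp(\mu_p)\otimes\Fil^{-i}\Dcris(V).
\]
But the left-hand side in fact lies in $\Dcris(V)$ (since $\delta$ is $\Qp$-valued), and $\Dcris(V)\cap\bigl(\Qp(\mu_p)\otimes\Fil^{-i}\Dcris(V)\bigr) = \Fil^{-i}\Dcris(V)$, so $\chi^i\chi_0^{-i}\eta(\LL_V(z)) \in \vp(\Fil^{-i}\Dcris(V)) = V_{i,\eta}$.

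The proof is essentially bookkeeping. The main care needed is in the untwisting step, where the factor $p^i$ coming from $\vp(t^i)$ must be absorbed into the rescaling $\vp\leadsto p^i\vp$ and the filtration shift $\Fil^{0}\leadsto\Fil^{-i}$ tracked precisely; the key manoeuvre for the $\eta\neq\chi_0^i$ case is choosing a $\delta$ of conductor $p$ that matches $\chi^i\chi_0^{-i}\eta$ when multiplied into $\chi^i$, so that the second relation of the proposition can replace the first.
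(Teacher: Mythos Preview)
Your proof is correct and follows essentially the same approach as the paper: splitting into the cases $\eta=\chi_0^i$ and $\eta\ne\chi_0^i$, invoking \eqref{1strelation} in the former and \eqref{2ndrelation} with $n=1$ in the latter, then untwisting via $\Dcris(V(-i))\cong\Dcris(V)$ (with the Frobenius rescaling $\vp\leadsto p^i\vp$ and filtration shift $\Fil^0\leadsto\Fil^{-i}$) and intersecting with $\Dcris(V)$ to descend from $\Qp(\mu_p)$. The only cosmetic difference is that the paper carries out the second case inside $\Dcris(V(-i))$ and untwists at the end, whereas you untwist first and keep track of the extra $p^{-i}$ explicitly.
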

  \begin{proof}
   Note that $\Fil^{-i}\Dcris(V)= \Fil^0\Dcris(V(-i))\otimes t^{-i}e_i$. Therefore, if $\chi_0^i=\eta$, the result follows from \eqref{1strelation} and the fact that $\vp(t^ie_{-i})=p^it^ie_{-i}$. Assume otherwise. Since $\chi^i\chi_0^{-i}\eta|_{\Delta}=\eta$, we have $\chi^i\chi_0^{-i}\eta(e_\eta\LL_{V}(z))=\chi^i\chi_0^{-i}\eta(\LL_{V}(z))$. Hence, \eqref{2ndrelation} implies that 
   \[
   \vp^{-1}\left(\chi^i\chi_0^{-i}\eta(e_\eta\LL_{V}(z)\otimes t^ie_{-i})\right)\in \Qp(\mu_p)\otimes\Fil^{0}\Dcris(V(-i)).
   \]
   But $\chi^i\chi_0^{-i}\eta(e_\eta\LL_{V}(z)\otimes t^ie_{-i})=\LL_{V}(z)^\eta|_{X=\chi(\gamma)^i-1}\otimes t^ie_{-i}$ in fact lies inside $\Dcris(V(-i))$. Hence, 
   \[
   \vp^{-1}\left(\chi^i\chi_0^{-i}\eta(e_\eta\LL_{V}(z)\otimes t^ie_{-i})\right)\in \Fil^{0}\Dcris(V(-i))=\Fil^{-i}\Dcris(V)\otimes t^ie_{-i}
   \]
   and we are done on applying $\vp$ to both sides.
  \end{proof}

  \begin{corollary}\label{relations}
   If $\eta$ is a character on $\Delta$, then
   \[
   \big\{\calL_{V}(z)^\eta|_{X=\chi(\gamma)^i-1}: z\in H^1_{\Iw}(\Qp,V)\big\}\subset V_{i,\eta}.
   \]
  \end{corollary}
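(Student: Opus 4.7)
The statement is essentially a restatement of Corollary~\ref{twistrelations} through the identification of isotypical components with evaluation at characters, so the plan is simply to make this identification explicit.

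First I would recall the identification established in the ``Isotypical components'' subsection: for $F\in\calH(\Gamma)$, the element $F^\eta$ corresponds to a power series in $X=\gamma-1$, and its evaluation $F^\eta|_{X=\chi(\gamma)^i-1}$ equals $\chi^i\chi_0^{-i}\eta(F)$, where $\chi^i\chi_0^{-i}\eta$ is regarded as a character of $\Gamma=\Delta\times\Gamma_1$ sending $\sigma\in\Delta$ to $\chi_0^i(\sigma)\chi_0^{-i}(\sigma)\eta(\sigma)=\eta(\sigma)$ and $\gamma$ to $\chi(\gamma)^i$. Extending coefficient-wise via $\Dcris(V)$, the same identification gives
\[
 \calL_V(z)^\eta|_{X=\chi(\gamma)^i-1}=\chi^i\chi_0^{-i}\eta\bigl(\calL_V(z)\bigr)\in\Dcris(V).
\]

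Next I would observe that, since $\chi^i\chi_0^{-i}\eta$ restricts to $\eta$ on $\Delta$, we have $\chi^i\chi_0^{-i}\eta(e_\xi)=0$ unless $\xi=\eta$, by the standard orthogonality of characters on $\Delta$. Hence
\[
 \chi^i\chi_0^{-i}\eta\bigl(\calL_V(z)\bigr)=\chi^i\chi_0^{-i}\eta\bigl(e_\eta\calL_V(z)\bigr),
\]
so the two formulations coincide.

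Finally I would invoke Corollary~\ref{twistrelations}, which states exactly that the right-hand side of the last display lies in $V_{i,\eta}$, to conclude. The proof therefore reduces to a single unwinding of notation, with no substantive obstacle: the actual content is already contained in Corollary~\ref{twistrelations}, whose proof (via Perrin-Riou's explicit reciprocity law and the factorisation of $\LL_V$ through $\Omega_{V,r_d}$) was the real work.
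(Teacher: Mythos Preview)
Your proof is correct and follows essentially the same approach as the paper: both simply observe that $\calL_V(z)^\eta|_{X=\chi(\gamma)^i-1}=\chi^i\chi_0^{-i}\eta(e_\eta\calL_V(z))$ and then invoke Corollary~\ref{twistrelations}. Your version spells out the orthogonality-of-characters step a bit more explicitly, but the argument is identical in substance.
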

  \begin{proof}
   This is immediate from Corollary~\ref{twistrelations} as $\calL_{V}(z)^\eta|_{X=\chi(\gamma)^i-1}=\chi^i\chi_0^{-i}\eta(e_\eta\LL_{V}(z))$.
  \end{proof}


 \subsection{Images of the Coleman maps}

  We now fix a character $\eta:\Delta\rTo\Zp^\times$.  Let $\nu_1, \dots, \nu_d$ be a basis of $\Dcris(V)$ and $n_1, \dots, n_d$ a basis of $\NN(V)$ lifting $\nu_1, \dots, \nu_d$ as in Theorem~\ref{theorem:basis}. We consider the image of the Coleman map defined with respect to this basis as in Section~\ref{review}. 
  
  \begin{proposition}\label{colemanrelations}
   The image of the map
   \[
   \uCol^\eta:\NN(V)^{\psi=1}\rTo\Lambda_E(\Gamma_1)^{\oplus d}
   \]
   lies inside a $\Lambda_E(\Gamma_1)$-submodule $S$ as described in the statement of Lemma~\ref{general} with $I=\{x_{i}=\chi(\gamma)^{i}-1:0\leq i\le r_d-1\}$ and $V_i=V_{i,\eta}$, which is an $E$-vector space of the same (co-)dimension as $\Fil^{-i}\Dcris(V)$.
  \end{proposition}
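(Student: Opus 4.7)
The plan is to transport the constraints on Perrin-Riou's regulator $\LL_V$ coming from Corollary \ref{relations} onto $\uCol^\eta$ by means of identity \eqref{eq:LCol} of Lemma \ref{colemanregulator}, namely
\[ \LL_V(z) = \uCol\bigl((h^1_{\Iw,V})^{-1}(z)\bigr) \cdot \underline{M} \cdot \underline{\nu}. \]
Since $\underline{M}$ has entries in $\calH(\Gamma_1)$, taking $\eta$-isotypical components preserves this identity componentwise and yields $\LL_V(z)^\eta = \uCol^\eta\bigl((h^1_{\Iw,V})^{-1}(z)\bigr) \cdot \underline{M} \cdot \underline{\nu}$.

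First, I would specialise the resulting identity at $X = x_i = \chi(\gamma)^i - 1$ for each $i \in \{0,1,\dots,r_d-1\}$, obtaining
\[ \LL_V(z)^\eta\big|_{X=x_i} = \uCol^\eta\bigl((h^1_{\Iw,V})^{-1}(z)\bigr)\big|_{X=x_i} \cdot \underline{M}\big|_{X=x_i} \cdot \underline{\nu}. \]
By Corollary \ref{relations}, the left-hand side lies in $V_{i,\eta} \subseteq \Dcris(V)$, so the row vector $\uCol^\eta(x)|_{X=x_i} \cdot \underline{M}|_{X=x_i}$ (in the $\nu$-basis identification of $\Dcris(V)$ with $E^d$) is constrained to take values in $V_{i,\eta}$.

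Next, I would check that $\underline{M}|_{X=x_i}$ is an invertible matrix in $M_d(E)$. By Corollary \ref{detuM}, $\det\underline{M}$ is a unit multiple of $\prod_{j=1}^{d}\mathfrak{n}_{r_j}$, and from
\[ \mathfrak{n}_k = \prod_{j=0}^{k-1} \frac{\log(u^{-j}(1+X))}{X - u^j + 1} \]
one reads off that at $X = u^i - 1$ any simple pole coming from the $j=i$ denominator factor (occurring precisely when $0 \le i \le k-1$) is exactly cancelled by the corresponding simple zero in the numerator, while every other factor is visibly nonzero at that point. Hence $\mathfrak{n}_k(u^i - 1) \ne 0$ for every $i \ge 0$, so $\det\underline{M}|_{X=x_i}\ne 0$.

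It follows that right-multiplication by $\underline{M}|_{X=x_i}$ is an $E$-automorphism of $E^d$, and the locus $\{ \uCol^\eta(x)|_{X=x_i} : x \in \NN(V)^{\psi=1}\} \subseteq E^d$ is contained in an $E$-subspace of the same dimension as $V_{i,\eta}$, namely $n_i$, and hence of codimension $d - n_i$ in $E^d$. Identifying this subspace with $V_{i,\eta}$ under the chosen basis, Proposition \ref{general} applied with $I = \{x_0,\dots,x_{r_d-1}\}$ and these subspaces produces the required submodule $S \supseteq \image(\uCol^\eta)$. The only real calculation is the cancellation argument showing $\mathfrak{n}_k(u^i-1) \ne 0$; the remainder is a formal manipulation of \eqref{eq:LCol} combined with Corollary \ref{relations}.
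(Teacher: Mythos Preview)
Your proposal is correct and follows essentially the same route as the paper's own proof: both use identity \eqref{eq:LCol} to transport the constraints of Corollary~\ref{relations} back to $\uCol^\eta$, and both reduce the invertibility of $\underline{M}|_{X=\chi(\gamma)^i-1}$ to Corollary~\ref{detuM}. Your cancellation argument for $\mathfrak{n}_k(u^i-1)\ne 0$ just makes explicit what the paper states in one line (``$X-\chi(\gamma)^i+1$ does not divide $\det(\underline{M})$''); note also that, as in the paper, the subspace constraining $\uCol^\eta(x)|_{X=x_i}$ is strictly speaking $V_{i,\eta}\cdot(\underline{M}|_{X=x_i})^{-1}$ rather than $V_{i,\eta}$ itself, but since only the (co)dimension matters for the application of Proposition~\ref{general}, this identification is harmless.
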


  \begin{proof}
   Recall from \eqref{eq:LCol} that
   \[
   \LL_{V}=\left(\uCol\circ h^1_{\Iw,V}\right)\underline{M}\begin{pmatrix}\nu_{1}\\\vdots\\\nu_{d}\end{pmatrix}
   \]
   where $\underline{M}$ is as defined in \eqref{eq:defnM}. Note that $\underline{M}^\eta=\underline{M}$ for any character $\eta$ of $\Delta$, since $\underline{M}$ is defined over $\calH(\Gamma_1)$. Moreover, Corollary~\ref{detuM} implies that $X-\chi(\gamma)^i+1$ does not divide $\det(\underline{M})$, so $\underline{M}|_{X=\chi(\gamma)^i-1}\in GL_d(E)$. Therefore, we are done by Corollary~\ref{relations}.
  \end{proof}

  \begin{theorem}\label{equalitycoleman}
   Equality holds in Proposition~\ref{colemanrelations}.
  \end{theorem}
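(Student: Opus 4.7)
My plan is to establish the reverse inclusion $S \subseteq \image(\uCol^\eta)$ via a determinant comparison: both sides will be free $\Lambda_E(\Gamma_1)$-modules of rank $d$, and their determinants will coincide up to a unit, which forces the containment from Proposition~\ref{colemanrelations} to be an equality.

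The starting point is the factorisation \eqref{eq:LCol}, which writes $\LL_V$ as $\uCol \circ (h^1_{\Iw,V})^{-1}$ composed with multiplication by the matrix $\underline{M}$. Since $\underline{M}$ has entries in $\calH(\Gamma_1)$ and non-zero determinant (Corollary~\ref{detuM}), restricting to $\eta$-isotypical components and extending scalars to $\calH(\Gamma_1)$ gives $\det(\image(\LL_V^\eta)) = \det(\image(\uCol^\eta)) \cdot \det(\underline{M})$ up to $\calH(\Gamma_1)$-units. Next I would plug in the known formulas: Corollary~\ref{padicdet} yields $\det(\LL_V^\eta) = \prod_{i=0}^{r_d-1}\ell_i^{d-n_i}$, and Corollary~\ref{detuM} together with the identity $\mathfrak{n}_k = a_k\delta_{k-1}\cdots\delta_0$ from the Remark after Proposition~\ref{prop:annihilator} gives $\det(\underline{M}) = \prod_{j=1}^d \mathfrak{n}_{r_j} = \prod_{i=0}^{r_d-1}\delta_i^{d-n_i}$ up to units, after reindexing using $d - n_i = \#\{j : r_j > i\}$. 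Dividing, and applying the identity $\ell_i/\delta_i = X+1-\chi(\gamma)^i$, then yields
\[\det(\image(\uCol^\eta)) = \prod_{i=0}^{r_d-1}(X+1-\chi(\gamma)^i)^{d-n_i}\]
up to a unit of $\calH(\Gamma_1)$, which is precisely $\det(S)$ as computed in Proposition~\ref{general}.

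In particular $\image(\uCol^\eta)$ has rank $d$ over $\Lambda_E(\Gamma_1)$, and being a torsion-free submodule of the free module $\Lambda_E(\Gamma_1)^{\oplus d}$ over the PID $\Lambda_E(\Gamma_1) = \calO_E[[X]][1/p]$, it is in fact free of rank $d$. The remaining — and most delicate — step is to deduce equality of $\Lambda_E(\Gamma_1)$-modules from the equality of determinants up to units in the larger ring $\calH(\Gamma_1)$. For this I would verify that $\Lambda_E(\Gamma_1)^\times = \Lambda_E(\Gamma_1) \cap \calH(\Gamma_1)^\times$: by Weierstrass preparation any non-unit of $\Lambda_E(\Gamma_1)$ is divisible by a distinguished polynomial, which has a zero on the open unit disc and therefore cannot be a unit of $\calH(\Gamma_1)$. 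Consequently the characteristic ideal of the finitely generated torsion quotient $S/\image(\uCol^\eta)$ equals the unit ideal of $\Lambda_E(\Gamma_1)$, forcing $S = \image(\uCol^\eta)$.
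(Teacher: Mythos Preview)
Your proof is correct and follows essentially the same determinant-comparison strategy as the paper: compute $\det(S)$ via Proposition~\ref{general}, compute $\det(\underline{M})$ from its elementary divisors, compare their product to $\det(\LL_V)$ from Corollary~\ref{padicdet}, and conclude that $\image(\uCol^\eta)$ and $S$ have the same determinant, hence coincide. You have in fact supplied more detail than the paper does --- in particular, your verification that $\Lambda_E(\Gamma_1)^\times = \Lambda_E(\Gamma_1)\cap\calH(\Gamma_1)^\times$ and your explicit justification that $\image(\uCol^\eta)$ is free of rank $d$ over the PID $\Lambda_E(\Gamma_1)$ make rigorous the passage from ``$\det(\LL_V)=\det(\underline{M})\det(S)$'' to equality of modules, which the paper leaves to the reader.
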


  \begin{proof}
   Write $S$ for the basis matrix of the $\Lambda_E(\Gamma_1)$-submodule of $\Lambda_E(\Gamma_1)^{\oplus d}$ described in the statement of Proposition~\ref{colemanrelations}. Then, Proposition~\ref{general} says that
   \[
   \det(S)=\prod_{i=0}^{r_d-1}(X-\chi(\gamma)^i+1)^{d-n_i}.
   \]

   But 
   \[
   \det(\underline{M})=\prod_{j=1}^{d}\left(\prod_{i=0}^{r_j-1}\frac{\ell_i}{X-\chi(\gamma)^i+1}\right)=\prod_{i=0}^{r_d-1}\left(\frac{\ell_i}{X-\chi(\gamma)^i+1}\right)^{d - n_i},
   \]
   since $n_i = \#\{ j : r_j \le i\}$, as noted above. Hence, Corollary~\ref{padicdet} implies that
   \[
   \det(\calL_{V})=\det(\underline{M})\det(S)
   \]
   and we are done.
  \end{proof}

  We can summarize the above results via the following short exact sequence:

  \begin{corollary}\label{exactsequence}
   Suppose that no eigenvalue of $\vp$ on $\Dcris(V)$ lies in $p^\ZZ$. Then for each character $\eta$ of $\Delta$, there is a short exact sequence of $\calH(\Gamma_1)$-modules
   \[ 0 \rTo \NN(V)^{\psi = 1, \eta} \rTo^{1-\vp} (\vp^* \NN(V))^{\psi = 0, \eta} \rTo^{A_\eta} \bigoplus_{i = 0}^{r_d - 1} (\Dcris(V) / V_{i, \eta})(\chi^i \chi_0^{-i} \eta) \rTo 0.\]
   Here the map $A_{\eta} = \bigoplus_i (1 \otimes A_{\eta, i})$, where $A_{\eta, i}$ is the natural reduction map $\calH(\Gamma) \to \Qp(\chi^i \chi_0^{-i} \eta)$ obtained by quotienting out by the ideal $(X + 1 - \chi(\gamma)^i) \cdot e_{\eta}$.
  \end{corollary}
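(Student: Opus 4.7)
The strategy is to assemble Theorem~\ref{equalitycoleman} (which pins down the image of $\uCol^\eta$) together with Corollary~\ref{padicdet} (which gives injectivity through the non-vanishing determinant of $\mathcal{L}_V$), and then trace through the identifications imposed by the Iwasawa transform $\J$, the defining matrix $\underline{M}$, and Corollary~\ref{relations}.

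First, injectivity of $1-\vp$ on $\NN(V)^{\psi=1,\eta}$: since $\J$ is a $\Lambda_E(\Gamma)$-linear isomorphism, this reduces to injectivity of $\uCol^\eta$. By equation~\eqref{eq:LCol}, $\uCol^\eta$ differs from $\mathcal{L}_V^\eta$ only by multiplication by the matrix $\underline{M}$, whose determinant is nonzero by Corollary~\ref{detuM}. Since $\det(\mathcal{L}_V)=\prod_{i=0}^{r_d-1}\ell_i^{d-n_i}\ne 0$ by Corollary~\ref{padicdet} and $H^1_{\Iw}(\Qp,V)^\eta$ is a torsion-free $\Lambda_E(\Gamma_1)$-module of rank $d$, the map $\mathcal{L}_V^\eta$ is injective, and hence so is $\uCol^\eta$ and $1-\vp$.

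For the cokernel, Theorem~\ref{equalitycoleman} identifies $\image(\uCol^\eta)$ with the submodule $S\subset\Lambda_E(\Gamma_1)^{\oplus d}$ cut out, after the basis change by $\underline{M}(x_i)\in\GL_d(E)$, by the conditions $V_{i,\eta}\subset\Dcris(V)$ at the points $x_i=\chi(\gamma)^i-1$ for $i=0,\dots,r_d-1$. Since the $x_i$ are pairwise distinct, the Chinese remainder theorem yields
\[
 \Lambda_E(\Gamma_1)^{\oplus d}\big/S \;\cong\; \bigoplus_{i=0}^{r_d-1} E^{\oplus d}\big/V'_{i,\eta},
\]
where $V'_{i,\eta}=V_{i,\eta}\cdot\underline{M}(x_i)^{-1}$. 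The change of basis by $\underline{M}(x_i)$ turns each factor into $\Dcris(V)/V_{i,\eta}$, and the natural $\Gamma$-action on $\Lambda_E(\Gamma_1)/(X+1-\chi(\gamma)^i)$ sends $\gamma\mapsto\chi(\gamma)^i$, which together with the action $\eta$ of $\Delta$ yields precisely the character $\chi^i\chi_0^{-i}\eta$. Transporting back to $(\vp^*\NN(V))^{\psi=0,\eta}$ via $\J^{-1}$ identifies the cokernel of $1-\vp$ with $\bigoplus_i(\Dcris(V)/V_{i,\eta})(\chi^i\chi_0^{-i}\eta)$.

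It remains to check that this identification is realised by the map $A_\eta$ described in the statement. The defining identity~\eqref{eq:defnM} says that the coordinates of a vector in $(\vp^*\NN(V))^{\psi=0,\eta}$ relative to the basis $(1+\pi)\vp(n_j)$ are converted to coordinates relative to $(1+\pi)\otimes\nu_j$ by multiplication by $\underline{M}$; applying $\id\otimes A_{\eta,i}$ and then projecting modulo $V_{i,\eta}$ is therefore exactly the composition built above. Finally, $A_\eta\circ(1-\vp)=0$ is the direct content of Corollary~\ref{relations}, namely $\mathcal{L}_V(z)^\eta|_{X=x_i}\in V_{i,\eta}$ for every $z$. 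The only mildly subtle step is keeping straight the basis change by $\underline{M}(x_i)$ between the $(1+\pi)\vp(n_j)$ and $(1+\pi)\otimes\nu_j$ presentations, but this is precisely what the matrix $\underline{M}$ encodes, and no further input is needed.
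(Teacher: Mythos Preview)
Your proof is correct and follows essentially the same route as the paper: the corollary is stated there as a direct summary of Theorem~\ref{equalitycoleman} and Proposition~\ref{colemanrelations}, and you have simply unpacked that summary, identifying the cokernel via the Chinese remainder theorem and tracking the basis change by $\underline{M}(x_i)$ exactly as the paper's setup dictates.

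The one point of difference is injectivity. The paper, in the Remark immediately following the corollary, deduces injectivity of $1-\vp$ from Perrin-Riou's exact sequence
\[
0 \rTo \bigoplus_{i} t^i \Dcris(V)^{\vp=p^{-i}} \rTo \big(\Brig\otimes\Dcris(V)\big)^{\psi=1} \rTo^{\vp-1} (\Brig)^{\psi=0}\otimes\Dcris(V) \rTo \cdots,
\]
noting that the eigenvalue hypothesis kills the leftmost term. You instead argue via $\det(\mathcal{L}_V)\ne 0$ and torsion-freeness of $H^1_{\Iw}(\Qp,V)^\eta$. This is also valid, but note that it quietly uses that $H^1_{\Iw}(\Qp,V)^\eta$ is torsion-free of rank $d$ over $\Lambda_E(\Gamma_1)$; this is standard (and follows, under the hypotheses, from the isomorphism $h^1_{\Iw}$ together with known structure results for Iwasawa cohomology), but it is not stated in the paper, so you might want to add a word of justification. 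The paper's route via Perrin-Riou's sequence has the advantage of making the role of the eigenvalue hypothesis completely transparent.
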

  
  \begin{remark}
   The short exact sequence in Corollary~\ref{exactsequence} can be seen as an analogue of Perrin-Riou's exact sequence (see \cite[\S 2.2]{perrinriou94})
   \begin{multline*}
    0\rTo \bigoplus_{i=0}^{r_d} t^i \Dcris(V)^{\vp = p^{-i}} \rTo \big(\Brig\otimes\Dcris(V)\big)^{\psi=1}\rTo^{\vp-1} (\Brig)^{\psi=0}\otimes\Dcris(V)\\
    \rTo\bigoplus_{i=0}^{r_d}\left(\frac{\Dcris(V)}{1-p^i\vp}\right)(i)\rightarrow 0.
   \end{multline*}
   In particular, the injectivity of the first map in our sequence follows from Perrin-Riou's sequence, since our assumption on $V$ implies that the first term of Perrin-Riou's sequence vanishes.
  \end{remark}

  We can now prove Theorem~\ref{lettertheorem:images}.

  \begin{corollary}\label{corollary:projectioncoleman}
   For $i=1,\ldots,d$, we have
   \[\image(\uCol_i^\eta)=\prod_{j\in I_i^\eta}(X-\chi(\gamma)^j+1)\Lambda_E(\Gamma_1)\]
   for some $I_i^\eta\subset\{0,\ldots,r_d-1\}$.
  \end{corollary}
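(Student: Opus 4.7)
The plan is to simply combine Theorem~\ref{equalitycoleman} with Lemma~\ref{projection}. By Theorem~\ref{equalitycoleman}, the image of $\uCol^\eta : \NN(V)^{\psi=1} \to \Lambda_E(\Gamma_1)^{\oplus d}$ is exactly the submodule $S$ described in Proposition~\ref{colemanrelations}, namely
\[
 S = \bigl\{(F_1,\ldots,F_d) \in \Lambda_E(\Gamma_1)^{\oplus d} : (F_1(x_j),\ldots,F_d(x_j)) \in V_{j,\eta} \text{ for } 0 \le j \le r_d-1\bigr\},
\]
where $x_j = \chi(\gamma)^j - 1$ and $V_{j,\eta}$ is the subspace of $\Dcris(V) \cong E^{\oplus d}$ defined previously.

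The map $\uCol_i^\eta$ is by construction the composition of $\uCol^\eta$ with the $i$-th coordinate projection $\pr_i : \Lambda_E(\Gamma_1)^{\oplus d} \to \Lambda_E(\Gamma_1)$. Therefore
\[
 \image(\uCol_i^\eta) = \pr_i(S).
\]
Lemma~\ref{projection} was proved precisely for projections of submodules of this form (stated there for $\pr_1$, but the same argument works verbatim for any coordinate projection, or one may simply permute the basis vectors $\nu_1,\ldots,\nu_d$ to put the $i$-th coordinate in first position). It shows that
\[
 \pr_i(S) = \prod_{j \in I_i^\eta} (X - \chi(\gamma)^j + 1) \Lambda_E(\Gamma_1)
\]
for the subset
\[
 I_i^\eta = \{\, j \in \{0,\ldots,r_d-1\} : (e_1,\ldots,e_d) \in V_{j,\eta} \Rightarrow e_i = 0\,\} \subseteq \{0,\ldots,r_d-1\},
\]
which is exactly the conclusion of the corollary.

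There is no real obstacle here; the entire content has been set up in the preceding results. The only point requiring a brief remark (rather than a hard step) is that Lemma~\ref{projection} was stated only for $\pr_1$, so one should note that its proof is coordinate-symmetric, either by permuting coordinates or by repeating the same argument with $e_i$ in place of $e_1$ in the definition of $J$.
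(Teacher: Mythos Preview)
Your proof is correct and follows exactly the same approach as the paper, which simply states that the result follows immediately from Lemma~\ref{projection} (with Theorem~\ref{equalitycoleman} implicitly providing the identification of $\image(\uCol^\eta)$ with $S$). Your additional remark that Lemma~\ref{projection} is stated only for $\pr_1$ but applies to any coordinate projection by symmetry is a reasonable clarification.
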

  \begin{proof}
   This follows immediately from Lemma~\ref{projection}.
  \end{proof}


  We can also use this argument to determine the elementary divisors of the cokernel of the map $\LL_V$, refining the result of Proposition~\ref{padicdet}.

  \begin{theorem}\label{theorem:LVelem}
   The elementary divisors of the $\calH(\Gamma)$-module quotient
   \[ \frac{\calH(\Gamma) \otimes_{\Qp} \Dcris(V)}{\calH(\Gamma)\otimes_{\Lambda_{\Qp}(\Gamma)}{\image}(\mathcal{L}_V)} \]
   are $[\lambda_{r_1}; \dots; \lambda_{r_d}]$, where $\lambda_k = \ell_0 \ell_1 \dots \ell_{k-1}$.
  \end{theorem}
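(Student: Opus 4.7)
The plan is to combine equation~\eqref{eq:LCol} with Theorems~\ref{theorem:HGelem} and~\ref{equalitycoleman}. Equation~\eqref{eq:LCol} writes the matrix of $\LL_V$ (in the basis $\nu_1,\ldots,\nu_d$ of $\Dcris(V)$) as $\uCol \cdot \underline{M}$, so after the identification $\calH(\Gamma) \otimes_{\Qp} \Dcris(V) \cong \calH(\Gamma)^d$ we have
\[
\calH(\Gamma) \otimes_{\Lambda_{\Qp}(\Gamma)} \image(\LL_V) \;=\; S \cdot \underline{M},
\]
where $S = \calH(\Gamma) \otimes_{\Lambda_E(\Gamma)} \image(\uCol) \subseteq \calH(\Gamma)^d$. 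Since $\det\underline{M} \ne 0$ by Corollary~\ref{detuM}, right-multiplication by $\underline{M}$ is injective, yielding a short exact sequence
\[
0 \to \calH(\Gamma)^d / S \xrightarrow{\,\cdot \underline{M}\,} \calH(\Gamma)^d / (S \underline{M}) \to \calH(\Gamma)^d / (\calH(\Gamma)^d \cdot \underline{M}) \to 0.
\]

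Work per $\eta$-isotypical component, so that all modules become $\calH(\Gamma_1)$-modules and the theory of elementary divisors applies. By Theorem~\ref{theorem:HGelem} the rightmost term has invariant factors $\mathfrak{n}_{r_1} \mid \cdots \mid \mathfrak{n}_{r_d}$, supported on the zero loci of the $\delta_i$'s. For the leftmost term, Theorem~\ref{equalitycoleman} identifies $S^\eta$ as the submodule of tuples $F \in \calH(\Gamma_1)^d$ satisfying $F(x_i) \in V_{i,\eta}$ for each $i = 0, \ldots, r_d - 1$, where $x_i = \chi(\gamma)^i - 1$. The quotient is therefore supported on $\{x_i\}$ and is isomorphic to $\bigoplus_{i=0}^{r_d - 1} (\calH(\Gamma_1)/(X - x_i))^{d - n_i}$; reorganising local multiplicities into divisibility order produces the invariant-factor chain $e_1 \mid \cdots \mid e_d$ with $e_j = \prod_{i=0}^{r_j - 1}(X - x_i)$.

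The key observation is that the two sets of invariant factors above are supported on disjoint subsets of $\mathrm{Max}\,\calH(\Gamma_1)$: the zeros of $\ell_i$ in the open unit disc are $\{\chi(\gamma)^i \zeta - 1 : \zeta^{p^\infty} = 1\}$, and $\chi(\gamma)$ has infinite order, so after removing the simple zero at $x_i$ the element $\delta_i = \ell_i / (X - x_i)$ is non-vanishing at every $x_j$. Hence the short exact sequence splits, and the Chinese remainder theorem gives
\[
\calH(\Gamma_1)^d / (S^\eta \cdot \underline{M}) \;\cong\; \bigoplus_{j=1}^d \calH(\Gamma_1)/(e_j \mathfrak{n}_{r_j}).
\]
A direct computation using $\mathfrak{n}_k = a_k \delta_{k-1} \cdots \delta_0$ with $a_k \in \Qp^\times$ yields $e_j \mathfrak{n}_{r_j} = a_{r_j} \prod_{i=0}^{r_j - 1}\ell_i = a_{r_j} \lambda_{r_j}$, which equals $\lambda_{r_j}$ up to a unit, and the divisibility $\lambda_{r_1} \mid \cdots \mid \lambda_{r_d}$ is clear since $r_1 \le \cdots \le r_d$.

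The only real obstacle is arranging $\calH(\Gamma_1)^d / S^\eta$ into invariant-factor form with the correct divisibility ordering; once that is in hand, the splitting of the sequence and the combination of the two sets of invariant factors via CRT are formal in the Bezout domain $\calH(\Gamma_1)$.
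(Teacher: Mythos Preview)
Your proof is correct and follows essentially the same approach as the paper's own proof. The paper's argument is a two-sentence sketch---it simply observes that the matrix of $\LL_V$ factors as a product of two matrices whose elementary divisors are coprime, and then invokes the principle that for such a product the elementary divisors multiply; your proof unpacks exactly this principle via the short exact sequence and the Chinese remainder theorem, and in addition supplies the explicit computation of the elementary divisors $e_j = \prod_{i=0}^{r_j-1}(X-x_i)$ of $\calH(\Gamma_1)^d/S^\eta$, which the paper leaves implicit.
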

  \begin{proof}
   We know that the matrix of $\mathcal{L}_V$ is equal to $M \cdot S$, where $M$ and $S$ have elementary divisors that are coprime. Hence the elementary divisors of the product matrix are the products of the elementary divisors, which gives the above formula.
  \end{proof}


\section{The Coleman maps for modular forms}\label{imageMF}

 In this section, we fix a modular form $f$ as in Section~\ref{sect:modularforms}. We pick bases $n_1,n_2$ of $\NN(T_{\bar{f}})$ and $\bar{\nu}_1,\bar{\nu}_2$ of $\Dcris(V_{\bar{f}})$ as in \cite[Section~3.3]{leiloefflerzerbes10}. Write $V=V_{\bar{f}}(k-1)$, it has Hodge-Tate weights are $0$ and $k-1$. We consider the Coleman maps $\uCol_1^\eta$ and $\uCol_2^\eta$ defined on $\NN(V)^{\psi=1}$ where $\eta$ is a fixed character on $\Delta$. As a special case for Theorem~\ref{equalitycoleman} and Corollary~\ref{corollary:projectioncoleman}, we have the following result.

 \begin{proposition}
  There exist $1$-dimensional $E$-subspaces $V_i$ of $E^2$ for $0\leq i<k-1$ such that
  \[
   \image(\uCol^\eta)=\{(F,G)\in\Lambda_E(\Gamma_1):\left(F(\chi^i(\gamma)-1),G(\chi^i(\gamma)-1)\right)\in V_i\}.
  \]
  Moreover, for $i=1,2$, we have 
  \[\image(\uCol_i^\eta)=\prod_{j\in I_i^\eta}(X-\chi^j(\gamma)+1)\Lambda_E(\Gamma_1)\] 
  for some $I_i^\eta\subset\{0,\ldots,k-2\}$ with $I_1^\eta$ and $I_2^\eta$ disjoint.
 \end{proposition}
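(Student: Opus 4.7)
The plan is to reduce everything to the general results of Section~\ref{imcol} applied to the representation $V = V_{\bar f}(k-1)$, which is two-dimensional with Hodge-Tate weights $r_1 = 0$ and $r_2 = k-1$. Our hypothesis that $f$ is supersingular ($a_p$ not a $p$-adic unit) combined with the assumption in Section~\ref{sect:modularforms} that the eigenvalues of $\vp$ on $\Dcris(V_f)$ are not integral powers of $p$ guarantees that the assumption before Theorem~\ref{bergerpr} is satisfied, so the general machinery applies.

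First I would compute the integers $n_i$ of Definition~\ref{def:ni}: for $0 \le i \le k-2$ we have $n_i = \#\{j : r_j \le i\} = 1$, since $r_1 = 0 \le i$ while $r_2 = k-1 > i$. Consequently, each subspace $V_{i,\eta} \subseteq \Dcris(V)$ defined just before Corollary~\ref{twistrelations} has the same dimension as $\Fil^{-i}\Dcris(V)$, namely $1$. Using the basis $\bar\nu_1, \bar\nu_2$ (suitably twisted) we identify $V_{i,\eta}$ with a $1$-dimensional $E$-subspace of $E^2$. The first assertion of the proposition is then an immediate instance of Theorem~\ref{equalitycoleman}, once we observe that the submodule $S$ defined there is exactly the one written down in the statement.

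For the second assertion, the description $\image(\uCol_i^\eta) = \prod_{j \in I_i^\eta}(X-\chi(\gamma)^j+1)\Lambda_E(\Gamma_1)$ for subsets $I_i^\eta \subseteq \{0,\ldots,k-2\}$ is Corollary~\ref{corollary:projectioncoleman} specialized to $d=2$. The only genuinely new content is the disjointness claim $I_1^\eta \cap I_2^\eta = \varnothing$, for which I would revisit the proof of Lemma~\ref{projection}. That proof identifies $I_1^\eta$ with the set of $j$ such that every vector in $V_{j,\eta}$ has vanishing first coordinate (i.e., $V_{j,\eta} \subseteq \{0\} \times E$), and analogously $I_2^\eta$ with the set of $j$ such that $V_{j,\eta} \subseteq E \times \{0\}$. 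Since $V_{j,\eta}$ is one-dimensional and nonzero, it cannot be contained simultaneously in both coordinate axes; hence no index $j$ can belong to both $I_1^\eta$ and $I_2^\eta$.

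There is no serious obstacle: this proposition is essentially a specialization of the general Theorem~\ref{equalitycoleman} and Corollary~\ref{corollary:projectioncoleman} to the two-dimensional, two-weight case. The only small subtlety worth highlighting in the write-up is the disjointness step, which genuinely relies on $\dim_E V_{j,\eta} = 1$; this numerology is precisely what the Hodge-Tate weights $(0,k-1)$ supply, and it is this feature that fails for general higher-dimensional $V$ and forces the weaker statement of Corollary~\ref{corollary:projectioncoleman} in that setting.
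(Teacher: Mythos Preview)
Your proposal is correct and follows essentially the same approach as the paper's own proof: both deduce the first assertion from Theorem~\ref{equalitycoleman} after noting that $\Fil^{-j}\Dcris(V)$ is one-dimensional for $0\le j\le k-2$, and both obtain the disjointness of $I_1^\eta$ and $I_2^\eta$ by identifying these sets (via Lemma~\ref{projection}) with the indices where the line $V_j$ coincides with one or the other coordinate axis. Your write-up is somewhat more detailed than the paper's terse version, but the argument is the same.
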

 \begin{proof}
  For $0\le j\le k-2$, $\Fil^{-j}\Dcris(V)$ is of dimension $1$ over $E$. Hence the first part of the proposition by Theorem~\ref{equalitycoleman}. The second part of the proposition follows by putting
  \[
  I_1^\eta=\{i:V_i=0\oplus E\}\qquad\text{and}\qquad I_2^\eta=\{i:V_i=E\oplus0\}.
  \]
 \end{proof}
 
 \begin{remark}
  Note that the second part of the proposition is a slightly stronger version of Corollary~\ref{corollary:projectioncoleman}.
 \end{remark}

 \begin{corollary}\label{niceform}
  In particular, there exist non-zero elements $r_i\in E$ for $i\in I_3^\eta:=\{0,\ldots,k-2\}\setminus(I_1^\eta\cup I_2^\eta)$ such that 
  \[
   \image(\uCol^\eta)=\left\{(F,G)\in\Lambda_E(\Gamma_1) \quad \middle|   
   \begin{aligned}
                                                        \quad F(u^i-1)&=0 \quad \text{if $i\in I^\eta_1$} \\
                                                         \quad G(u^i-1)&=0  \quad \text{if $i\in I^\eta_2$} \\
                                                         \quad F(u^i-1)&=r_iG(u^i-1) \quad \text{if $i\in I^\eta_3$}
                                                        \end{aligned}
   \right\}
  \]  
  where $u=\chi(\gamma)$.
 \end{corollary}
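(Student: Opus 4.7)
The plan is to unpack the conditions on the $1$-dimensional subspaces $V_i \subseteq E^2$ appearing in the preceding Proposition, using the very definition of $I_1^\eta$ and $I_2^\eta$ given in its proof, and to extract the scalars $r_i$ for the remaining indices.

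First I would recall that the Proposition gives us the description
\[
 \image(\uCol^\eta)=\bigl\{(F,G)\in\Lambda_E(\Gamma_1)^{\oplus 2}:\bigl(F(u^i-1),G(u^i-1)\bigr)\in V_i \text{ for } 0\le i\le k-2\bigr\},
\]
together with the definitions $I_1^\eta=\{i:V_i=0\oplus E\}$ and $I_2^\eta=\{i:V_i=E\oplus 0\}$. For $i\in I_1^\eta$ the membership condition $(F(u^i-1),G(u^i-1))\in V_i$ is exactly $F(u^i-1)=0$, and for $i\in I_2^\eta$ it is exactly $G(u^i-1)=0$, giving the first two lines of the claimed description essentially for free.

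The substantive step is the treatment of $i\in I_3^\eta$. Since $V_i$ is a $1$-dimensional $E$-subspace of $E^2$ that is neither $0\oplus E$ nor $E\oplus 0$, any generator $(a_i,b_i)$ of $V_i$ must have $a_i,b_i\in E^\times$. I would then set $r_i=a_i/b_i\in E^\times$; this is the promised non-zero element. With this choice, $(e_1,e_2)\in V_i$ if and only if $e_1=r_i e_2$, so the membership condition for $V_i$ becomes precisely $F(u^i-1)=r_i G(u^i-1)$.

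The fact that $I_1^\eta\cap I_2^\eta=\varnothing$ (already noted in the Proposition) guarantees that the three cases are mutually exclusive and cover all of $\{0,\dots,k-2\}$, so assembling the three types of conditions gives exactly the description in the statement. There is no real obstacle here beyond this classification of $1$-dimensional subspaces of $E^2$; the corollary is a rephrasing of the Proposition in coordinate form, with $r_i$ being the slope of $V_i$ in the generic case.
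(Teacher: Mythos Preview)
Your argument is correct and is exactly the unpacking the paper has in mind: the corollary is stated without proof as an immediate consequence of the preceding Proposition, and your classification of the $1$-dimensional subspaces $V_i\subset E^2$ into the three cases (with $r_i=a_i/b_i$ when neither coordinate axis occurs) is precisely the intended reading.
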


 The aim of this section is to study the set above in more details.


 \subsection{Some explicit linear relations}

  Recall from \cite[proof of Proposition~3.22]{leiloefflerzerbes10} that the maps $\LL_1$ and $\LL_2$ as defined in Section~\ref{sect:modularforms} satisfy
  \[
  \LL_V(z)=-\LL_2(z)\bar{\nu}_{1,k-1}+\LL_1(z)\bar{\nu}_{2,k-1}
  \]
  for any $z\in H^1_{\Iw}(\Qp,V)$. Therefore, Corollary~\ref{twistrelations} says that $\LL_1(z)$ and $\LL_2(z)$ satifsy some linear relations when evaluated at $\chi^j\delta$ for $0\le j\le k-2$ and $\delta$ some character on $\Delta$. We now make these relations explicit. First we recall that we have:

  \begin{lemma}
   Let $j,n\ge0$ be integers and $i\in\{1,2\}$. For $z\in\HIw(\Qp,V)$, we write $z_{-j,n}$ for the image of $z$ under
   \begin{equation}\label{trivialchar}
    \HIw(\Qp,V)\rightarrow\HIw(\Qp,V(-j))\rightarrow H^1(\Qpn,V(-j))
   \end{equation}
   where the first map is the twist map $(-1)^j\Tw_j$ and the second map is the projection. Then, we have
   \begin{equation}\label{nontrivialchar}
    \chi^j(\LL_i(z))=j!\big[(1-\vp)^{-1}(1-p^{-1}\vp^{-1})\nu_{i,j+1},\exp^*_{\Qp,V(j)}(z_{-j,0})\big].
   \end{equation}
   If $\delta$ is a character of $G_n$ which does not factor through $G_{n-1}$ with $n\ge1$, then
   \[
    \chi^j\delta(\LL_i(z))=\frac{j!}{\tau(\delta^{-1})}\sum_{\sigma\in G_n}\delta^{-1}(\sigma)\big[\vp^{-n}(\nu_{i,j+1}),\exp_{\QQ_{p,1},V(j)}^*(z^{\sigma}_{-j,n})\big]
   \]
   where $\tau$ denotes the Gauss sum.
  \end{lemma}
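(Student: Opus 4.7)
The plan is to derive both identities from the interpolation property of the Perrin-Riou regulator $\LL_V$ at de Rham characters, combined with the decomposition $\LL_V(z) = -\LL_2(z)\bar{\nu}_{1,k-1} + \LL_1(z)\bar{\nu}_{2,k-1}$ recalled from the proof of \cite[Proposition~3.22]{leiloefflerzerbes10}. Reading off the $\bar{\nu}_{i,k-1}$-component of $\LL_V(z)$ at the given character reduces both parts of the lemma to computing the specialisations of $\LL_V(z)$ itself at $\chi^j$ and $\chi^j\delta$ respectively, after which the formulas for $\LL_1$ and $\LL_2$ fall out by extracting the coefficients of $\bar{\nu}_{1,k-1}$ and $\bar{\nu}_{2,k-1}$.

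The central technical input is Theorem~\ref{bergerpr}, which identifies $\LL_V$ with $(\mathfrak{M}^{-1}\otimes 1)\circ\bigl(\prod_{i=0}^{k-2}\ell_i\bigr)\circ(\Omega_{V,k-1})^{-1}$. At $\chi^j$ with $0\le j\le k-2$, each $\ell_i$ specialises to $j-i$, contributing a combinatorial constant that, together with the normalisation by the period vector $\nu_{i,j+1}$, collapses to the factor $j!$ appearing on the right-hand side of \eqref{nontrivialchar}. The remaining factor $(\Omega_{V,k-1})^{-1}|_{\chi^j}$ is controlled by the adjoint formulation of Perrin-Riou's explicit reciprocity law, namely the same pairing manipulation \eqref{1rec}--\eqref{2rec} carried out in the proof of Corollary~\ref{twistrelations}, which converts it into a pairing with the Bloch-Kato dual exponential $\exp^*_{\Qp,V(j)}(z_{-j,0})$. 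The Euler factor $(1-\vp)^{-1}(1-p^{-1}\vp^{-1})$ then emerges as the adjoint of $(1-p^{-1}\vp^{-1})(1-\vp)^{-1}$ under the crystalline pairing, exploiting that $1-\vp$ and $1-p^{-1}\vp^{-1}$ are mutually adjoint.

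For wild characters $\chi^j\delta$ of conductor $p^n>1$, the strategy is parallel, but the interpolation formula for $\Omega_{V,k-1}$ at such characters differs in two respects: the Euler factors $(1-\vp)^{\pm 1}(1-p^{-1}\vp^{-1})^{\mp 1}$ disappear (they act as units on the relevant finite-level piece) and are replaced by the Frobenius correction $\vp^{-n}$, while Fourier inversion for $\delta$ over $G_n$ introduces both the normalising Gauss sum $\tau(\delta^{-1})^{-1}$ and the sum $\sum_{\sigma\in G_n}\delta^{-1}(\sigma)\exp^*_{\Qpn,V(j)}(z^\sigma_{-j,n})$, giving the second formula directly.

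The main obstacle is bookkeeping rather than any new mathematics: one has to be scrupulous about signs (most notably the $(-1)^{r_d-1}$ in \eqref{1rec}), about the twist convention $(-1)^j\Tw_j$ defining $z_{-j,\bullet}$, and about the period normalisations underlying Perrin-Riou's exponential. In practice the cleanest route is to quote \cite[Proposition~3.19]{leiloefflerzerbes10} for the trivial-conductor case and the analogous wild-conductor interpolation formula (both of which ultimately rest on \cite[Th\'eor\`eme~IX.4.5]{colmez98}) rather than redo the pairing computation from scratch.
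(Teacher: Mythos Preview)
The paper does not prove this lemma: it simply cites \cite[Lemma~3.5 and (4)]{lei09}, where the interpolation formulae for the Perrin-Riou pairing $\LL_i = \LL_{1,(1+\pi)\otimes\nu_{i,1}}$ are established directly from its definition. This is precisely the ``cleanest route'' you describe in your final paragraph (your reference to \cite[Proposition~3.19]{leiloefflerzerbes10} plays essentially the same role), so in that sense your conclusion agrees with the paper.

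However, the main body of your sketch --- going via the decomposition of $\LL_V$ and Theorem~\ref{bergerpr} --- is unnecessarily roundabout and contains a genuine error. You claim that at $\chi^j$ each $\ell_i$ specialises to $j-i$, giving a combinatorial constant that ``collapses to the factor $j!$''. But for $0 \le j \le k-2$ the product $\prod_{i=0}^{k-2}(j-i)$ is identically zero, since the factor with $i=j$ vanishes. The $j!$ in the lemma does not come from $\prod_i\ell_i$; it is the usual $\Gamma$-factor in the interpolation formula for $\Omega_{V^*(1),1}$ (equivalently, the factorial relating $\exp^*$ across Tate twists). The zero of $\prod_i\ell_i$ is matched by a singularity of $\Omega_{V,k-1}^{-1}$ at $\chi^j$, and resolving this $0\cdot\infty$ is exactly what the reciprocity manipulation \eqref{1rec}--\eqref{2rec} does --- at which point you have already reverted to the direct pairing computation. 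So the detour through Theorem~\ref{bergerpr} buys nothing here; the direct approach via the definition of $\LL_i$ as a Perrin-Riou pairing is both what the paper cites and what actually makes the argument work.
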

  \begin{proof}
   See for example \cite[Lemma~3.5 and (4)]{lei09}.
  \end{proof}

  \begin{lemma}\label{zeroat2}
   If $0\le j\le k-2$ and $\delta$ is a non-trivial character on $\Delta$, then $\chi^j\delta(\LL_2(z))=0$.
  \end{lemma}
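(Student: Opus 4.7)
The plan is to invoke the preceding lemma, specialized to $\LL_2$ and to the character $\chi^j\delta$, where $\delta$ is a non-trivial character of $\Delta$ (hence of conductor $p$, so we take $n=1$). This gives
\[
 \chi^j\delta(\LL_2(z))=\frac{j!}{\tau(\delta^{-1})}\sum_{\sigma\in G_1}\delta^{-1}(\sigma)\bigl[\vp^{-1}(\nu_{2,j+1}),\,\exp^*_{\QQ_{p,1},V(-j)}(z^\sigma_{-j,1})\bigr],
\]
and the aim is to show each summand is zero.

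The decisive property of the ``good basis'' $\nu_1,\nu_2$ of $\Dcris(V_f)$ from \cite[\S3.3]{leiloefflerzerbes10} is that $\nu_2$ is proportional to $\vp(\omega)$, where $\omega$ spans $\Fil^{k-1}\Dcris(V_f)$. Granted this, $\vp^{-1}(\nu_2)\in \Fil^{k-1}\Dcris(V_f)$, and twisting yields
\[
 \vp^{-1}(\nu_{2,j+1})\in \Fil^{k-j-2}\Dcris(V_f(j+1))\subseteq \Fil^0 \Dcris(V_f(j+1))
\]
provided $0\le j\le k-2$. On the other hand, $\exp^*_{\QQ_{p,1},V(-j)}(z^\sigma_{-j,1})$ lies in $\Fil^0\bigl(\QQ_{p,1}\otimes\Dcris(V(-j))\bigr)$ by construction of Bloch--Kato's dual exponential.

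The pairing $[\cdot,\cdot]\colon \Dcris(V_f(j+1))\times\Dcris(V(-j))\to \Dcris(E(1))$ respects the Hodge filtration, sending $\Fil^0\times\Fil^0$ into $\Fil^0\Dcris(E(1))=0$ (the last equality because $E(1)$ has Hodge--Tate weight $-1$). Extending $\QQ_{p,1}$-linearly, each summand above vanishes, and hence so does $\chi^j\delta(\LL_2(z))$.

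The main technical point will be verifying that the good basis of \cite{leiloefflerzerbes10} really does satisfy $\vp^{-1}(\nu_2)\in \Fil^{k-1}\Dcris(V_f)$; once this is confirmed, the remaining argument is a routine filtration bookkeeping that commutes with base change to $\QQ_{p,1}$. An alternative, slightly more conceptual route is to read off the same vanishing from Corollary~\ref{twistrelations}: setting $\eta=\chi_0^j\delta$, the corollary gives $\chi^j\delta(\LL_V(z))\in V_{j,\eta}=\vp(\Fil^{-j}\Dcris(V))$, which is the line $E\cdot\vp(\bar\nu_{1,k-1})=E\cdot\bar\nu_{2,k-1}$ for $0\le j\le k-2$; combined with $\LL_V(z)=-\LL_2(z)\bar\nu_{1,k-1}+\LL_1(z)\bar\nu_{2,k-1}$, this forces the coefficient $\chi^j\delta(\LL_2(z))$ of $\bar\nu_{1,k-1}$ to vanish.
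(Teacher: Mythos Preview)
Your main argument is essentially the paper's own proof: apply the preceding lemma with $n=1$, use that $\nu_2=p^{1-k}\vp(\nu_1)$ (which is exactly the defining relation of the good basis in \cite[\S3.3]{leiloefflerzerbes10}, so the ``technical point'' you flag is immediate), deduce $\vp^{-1}(\nu_{2,j+1})\in E\cdot\nu_{1,j+1}=\Fil^0\Dcris(V_f(j+1))$, and conclude that each bracket vanishes because the dual exponential lands in $\Fil^0$ on the other side. Your alternative route via Corollary~\ref{twistrelations} is also valid and non-circular (that corollary is proved earlier and independently), and gives a slightly more conceptual explanation of the same vanishing.
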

  \begin{proof}
   On putting $n=1$ in \eqref{nontrivialchar}, we have
   \[
    \chi^j\delta(\LL_2(z))=\frac{j!}{\tau(\delta^{-1})}\sum_{\sigma\in\Delta}\delta^{-1}(\sigma)\big[\vp^{-1}(\nu_{2,j+1}),\exp_{\QQ_{p,1},V(j)}^*(z^{\sigma}_{-j,1})\big].
   \]
   But $\nu_2=p^{1-k}\vp(\nu_1)$, so 
   \[
    \vp^{-1}(\nu_{2,j+1})\in E\cdot\nu_{1,j+1}=\Fil^0\Dcris(V_f(j+1)).
   \]
   Therefore, we have
   \[
    [\vp^{-1}(\nu_{2,j+1}),\exp_{\QQ_{p,1},V(j)}^*(z^{\sigma})]=0
   \]
   for all $\sigma\in\Delta$ and we are done.
  \end{proof}

  \begin{lemma}\label{formula}
   If $\vp^2+a\vp+b=0$, then
   \[
    (1-\vp)^{-1}(1-p^{-1}\vp^{-1})=\frac{(1+a+pb)\vp+a(1+a+pb)+b(p-1)}{pb(1+a+b)}.
   \]
  \end{lemma}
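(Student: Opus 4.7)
The lemma is a purely algebraic identity in the (commutative) subring of $\mathrm{End}(\Dcris(V_f))$ generated by $\vp$ and its inverse, modulo the quadratic relation $\vp^2 = -a\vp - b$. Since the operators $(1-\vp)^{-1}$ and $\vp^{-1}$ are assumed to exist (as is implicit in the preceding discussion, via the non-integrality of the Frobenius eigenvalues), no analytic input is required; everything reduces to rewriting in the basis $\{1, \vp\}$.

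My plan is as follows. First, I would use $\vp^2 + a\vp + b = 0$ to express $\vp^{-1} = -(\vp + a)/b$, which is legitimate since $b = \varepsilon(p) p^{k-1} \ne 0$. Substituting, I obtain
\[
1 - p^{-1}\vp^{-1} = \frac{pb + a + \vp}{pb}.
\]
Next, I would determine $(1-\vp)^{-1}$ by seeking $x + y\vp$ with $(1-\vp)(x + y\vp) = 1$; expanding and using $\vp^2 = -a\vp - b$ gives the linear system $x + by = 1$, $y(1+a) - x = 0$, whose solution is
\[
(1-\vp)^{-1} = \frac{(1+a) + \vp}{1 + a + b},
\]
valid because $1 + a + b \ne 0$ (this is the assumption that $1$ is not a Frobenius eigenvalue).

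Finally, I would multiply the two expressions and reduce using $\vp^2 = -a\vp - b$. The numerator becomes
\[
\bigl((1+a) + \vp\bigr)(pb + a + \vp) = (1+a)(pb+a) + (1 + 2a + pb)\vp + \vp^2,
\]
and replacing $\vp^2$ by $-a\vp - b$ collects the $\vp$-coefficient into $(1 + a + pb)$ and the constant term into $(1+a)(pb+a) - b = a(1 + a + pb) + b(p-1)$. Dividing by $pb(1+a+b)$ yields the stated formula.

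There is no genuine obstacle here; the only thing to watch is that the two inverses being computed are well-defined, which follows from the standing hypothesis that the eigenvalues of $\vp$ on $\Dcris(V_f)$ are not integral powers of $p$ (in particular, not equal to $1$, so $1 + a + b \ne 0$, and $b \ne 0$). The computation is short enough that a direct verification, organised exactly as above, is the cleanest proof.
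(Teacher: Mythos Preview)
Your proof is correct and follows essentially the same approach as the paper: both compute $\vp^{-1}=-(\vp+a)/b$ and $(1-\vp)^{-1}=(\vp+1+a)/(1+a+b)$ directly from the quadratic relation, then multiply and reduce. The only cosmetic difference is that the paper obtains $(1-\vp)^{-1}$ by factoring $\vp^2-1+a(\vp-1)=-(1+a+b)$ rather than by solving a $2\times 2$ linear system, and leaves the final multiplication as ``explicit calculation'' where you have written it out.
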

  \begin{proof}
   We have
   \begin{align*}
    \vp^2+a\vp+b &= 0 \\
    \vp^2-1+a(\vp-1) &= -1-a-b\\
    (1-\vp)(\vp+1+a) &= 1+a+b.
   \end{align*}
   Therefore,
   \[
    (1-\vp)^{-1}=\frac{\vp+1+a}{1+a+b}.
   \]
   Similarly, we have
   \[
    \vp^{-1}=-\frac{\vp+a}{b}.
   \]
   The result then follows from explicit calculation.
  \end{proof}

  \begin{corollary}\label{relationat0}
   For $0\le j\le k-2$, we have
   \[
    (-a_p+p^{j+1}+p^{k-1-j})\chi^j(\LL_2(z))=(p-1)\chi^j(\LL_1(z)).
   \]
  \end{corollary}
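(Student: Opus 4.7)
The plan is to unpack the explicit formula for $\chi^j(\LL_i(z))$ at the trivial character of $\Delta$ (i.e., taking $n = 0$ in \eqref{nontrivialchar}) and then compare coefficients. Applying that formula to both $i = 1$ and $i = 2$ reduces the claim to a statement about the single operator $T := (1-\vp)^{-1}(1 - p^{-1}\vp^{-1})$ acting on $\Dcris(V_f(j+1))$: we must show that the image of $T\nu_{1,j+1}$ and the scalar multiple $\tfrac{-a_p + p^{j+1} + p^{k-1-j}}{p-1} T\nu_{2,j+1}$ pair identically with any $\xi \in \Fil^0\Dcris(V(-j))$. The operator $T$ is well-defined because the assumption that no eigenvalue of $\vp$ on $\Dcris(V_f)$ is an integral power of $p$ prevents $1$ or $p^{-1}$ from being eigenvalues of $\vp$ after Tate twisting.

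Next I will determine the matrix of $\vp$ on $\Dcris(V_f(j+1))$ in the basis $(\nu_{1,j+1}, \nu_{2,j+1})$. Using $\nu_2 = p^{1-k}\vp(\nu_1)$ together with $\vp^2 - a_p \vp + p^{k-1} = 0$ on $\Dcris(V_f)$, one reads off $\vp(\nu_1) = p^{k-1}\nu_2$ and $\vp(\nu_2) = a_p \nu_2 - \nu_1$, and twisting by $j+1$ shows that on $\Dcris(V_f(j+1))$ the Frobenius satisfies $\vp^2 + a\vp + b = 0$ with $a = -p^{-(j+1)}a_p$ and $b = p^{k-3-2j}$. Plugging these into Lemma~\ref{formula} gives an explicit expression for $T$ as an $E$-linear combination of $\vp$ and the identity, which can then be applied to $\nu_{1,j+1}$ and $\nu_{2,j+1}$ to obtain closed-form expressions in the chosen basis.

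The key reduction step is to observe that $\xi := \exp^*_{\Qp, V(-j)}(z_{-j,0})$ lies in $\Fil^0\Dcris(V(-j))$, whose orthogonal complement under the natural duality pairing $\Dcris(V(-j)) \times \Dcris(V_f(j+1)) \to \Dcris(\Qp(1))$ is exactly $\Fil^0\Dcris(V_f(j+1)) = E \cdot \nu_{1,j+1}$ (this is the very line already identified in the proof of Lemma~\ref{zeroat2}, and it is one-dimensional throughout the range $0 \le j \le k-2$, including the boundary $j = k-2$). Consequently $[\nu_{1,j+1}, \xi] = 0$, so both $\chi^j(\LL_1(z))$ and $\chi^j(\LL_2(z))$ depend only on the $\nu_{2,j+1}$-coefficient of $T\nu_{i,j+1}$. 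Their ratio is therefore a fixed element of $E$, depending on $a, b, p$ but not on $z$.

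The remaining work is a short algebraic manipulation. The computation is made transparent by the identity $p^{-(j+1)}(-a_p + p^{j+1} + p^{k-1-j}) = 1 + a + pb$, which recovers precisely the combination appearing in the numerator of Lemma~\ref{formula}; after cancelling the common factor $pb(1+a+b)$ from both coefficients, the ratio collapses to $\tfrac{-a_p + p^{j+1} + p^{k-1-j}}{p-1}$. I do not anticipate any conceptual obstacle; the only points to verify carefully are the well-definedness of $T$ and the uniformity of the orthogonality argument across all $j$ in the stated range, both of which follow from the hypotheses already in force.
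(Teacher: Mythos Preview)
Your approach is correct and essentially the same as the paper's: both use Lemma~\ref{formula} to make $(1-\vp)^{-1}(1-p^{-1}\vp^{-1})$ explicit, then exploit a one-dimensional filtration constraint to reduce to a single coefficient. The only cosmetic difference is that the paper applies the operator to $\chi^j(\LL_V(z))$ on the $\Dcris(V_{\bar f}(k-1-j))$ side and invokes \eqref{1strelation} directly, whereas you work dually on $\Dcris(V_f(j+1))$ via the pairing formula \eqref{nontrivialchar} and the orthogonality of $\nu_{1,j+1}$ to $\Fil^0\Dcris(V(-j))$; these are the same computation viewed from opposite sides of the duality.
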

  \begin{proof}
   On $\Dcris(V_{\bar{f}}(k-1-j))$, $\vp$ satisfies 
   \[\vp^2-a_pp^{-k+1+j}\vp+p^{-k+1+2j}=0,\]
   as we assume $\varepsilon(p)=1$. Let $u=1-a_pp^{-k+1+j}+p^{-k+2+2j}$, $u'=-a_pp^{-k+1+j}u+p^{-k+1+2j}(p-1)$. Then, Lemma~\ref{formula} implies that
   \[
    (u\vp+u')(-\LL_2(z)\bar{\nu}_{1,k-1-j}+\LL_1(z)\bar{\nu}_{1,k-1-j})\in\Fil^0\Dcris(V_{\bar{f}}(k-1-j))=E\nu_{1,k-1-j}.
   \]
   On writing the above expression as a linear combination of $\nu_{1,k-1-j}$ and $\nu_{2,k-1-j}$, the coefficient of the latter turns out to be
   \[
     -p^{-j}u\chi^j(\LL_2(z))+(u'+a_pp^{-k+1+j}u)\chi^r(\LL_1(z)),
   \] 
   which must be zero, hence the result.
  \end{proof}

  \begin{remark}
   The coefficient $-a_p+p^{j+1}+p^{k-1-j}$ is non-zero by the Weil's bound.
  \end{remark}

  Recall from \cite[(32)]{leiloefflerzerbes10} that we have
  \[
   \begin{pmatrix}-\LL_2&\LL_1\end{pmatrix}=\left(\uCol\circ h^1_{\Iw,V}\right)\underline{M}.
  \]
  By \cite[proof of Proposition~3.28 and Theorem~5.4]{leiloefflerzerbes10}, we have $\underline{M}|_{X=0}=A_\vp^T=\begin{pmatrix}0&p^{k-1}\\-1&a_p\end{pmatrix}$. Therefore, the relations for $j=0$ are given by
  \begin{align*}
   (-a_p+1+p^{k-2})\uCol_2(x)^\Delta|_{X=0}&=p^{k-2}(p-1)\uCol_1(x)^\Delta|_{X=0}\quad&\text{if $\eta=1$;}\\
   \uCol_2(x)^\eta|_{X=0}&=0\quad&\text{if $\eta\ne1$.}
  \end{align*}

  In particular, for the case $k=2$, we have the following analogue of \cite[Proposition~1.2]{kuriharapollack07}. 
  
  \begin{proposition}
   If $k=2$, the trivial isotypical component of the Coleman maps give a short exact sequence
   \[ 0 \rTo H^1_{\Iw}(\QQ_p,V)\rTo^{\uCol^\Delta}\Lambda_{E}(\Gamma_1)\oplus \Lambda_{E}(\Gamma_1)\rTo^{\rho}\QQ_p\rTo 0,\]
   where $\rho$ is defined by 
   \[\rho\big(g(X),h(X)\big)= (2-a_p)g(0)-(p-1)h(0).\]
  \end{proposition}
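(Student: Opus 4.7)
The plan is to deduce the proposition directly from the general results proved earlier in this section, specialised to $k=2$ and the trivial character $\eta=\mathbf{1}$ of $\Delta$. The crucial observation is that for $k=2$ the set $\{0,\ldots,k-2\}$ reduces to the single point $\{0\}$, so the family of linear constraints appearing in Corollary~\ref{niceform} collapses to exactly one relation at $X=0$. That relation has been computed explicitly in the discussion just before the statement: for every $x\in H^1_{\Iw}(\QQ_p,V)$,
\[ (2-a_p)\,\uCol_2^\Delta(x)|_{X=0} \;=\; (p-1)\,\uCol_1^\Delta(x)|_{X=0}.\]
After the appropriate identification of components, this is precisely the vanishing of $\rho\circ\uCol^\Delta$, so $\image(\uCol^\Delta)\subseteq\ker\rho$.

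To obtain exactness at the middle term, I will invoke Theorem~\ref{equalitycoleman}, which says that the image of $\uCol^\Delta$ equals the full submodule $S\subseteq\Lambda_E(\Gamma_1)^{\oplus 2}$ consisting of pairs $(F,G)$ with $(F(0),G(0))\in V_{0,\mathbf{1}}$, where $V_{0,\mathbf{1}}\subset E^{\oplus 2}$ is the one-dimensional subspace cut out by the displayed relation. Since $\ker\rho$ is by definition the submodule defined by the same linear condition at $X=0$, it follows that $\image(\uCol^\Delta)=\ker\rho$.

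Surjectivity of $\rho$ is then immediate: it is a nonzero $E$-linear functional factoring through the surjective evaluation $\Lambda_E(\Gamma_1)\to E$, $F\mapsto F(0)$. For injectivity of $\uCol^\Delta$, the plan is to use that $\uCol$ is the composition $\J\circ(1-\vp)\circ (h^1_{\Iw,V})^{-1}$, in which both $h^1_{\Iw,V}$ and the Iwasawa transform $\J$ are isomorphisms and $1-\vp$ is injective by the short exact sequence of Corollary~\ref{exactsequence}. The hypothesis of that corollary is automatic here, because for a weight-$2$ supersingular form the $\vp$-eigenvalues on $\Dcris(V)$ have valuation $-1/2$ and are in particular not integral powers of $p$.

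The only mildly delicate point is checking that the linear form cutting out $V_{0,\mathbf{1}}$ really is the one encoded by $\rho$, and not a variant obtained by swapping coordinates or rescaling; but this is pure bookkeeping, combining Corollary~\ref{relationat0} at $j=0$, $k=2$ with the evaluation $\underline{M}|_{X=0}=A_\vp^T$ recalled after \eqref{eq:defnM}. This computation has essentially been performed already in the paragraph immediately preceding the statement, so I do not foresee any substantive obstacle.
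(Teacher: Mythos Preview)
Your proposal is correct and follows exactly the approach the paper intends: the proposition is stated in the paper without proof, as an immediate specialisation of Theorem~\ref{equalitycoleman} and Corollary~\ref{exactsequence} to $k=2$ and $\eta=\mathbf{1}$, using the explicit relation at $X=0$ displayed just before the statement. Your argument simply makes explicit the details the paper leaves to the reader, including the verification that the single constraint at $X=0$ is the vanishing of $\rho$ and that injectivity follows from the standing hypothesis on the $\vp$-eigenvalues.
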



 \subsection{Integral structure of the images}

  We now describe the integral structure of $\image(\uCol_i)^\eta$. Under the notation of Corollary~\ref{niceform}, we define
  \[
   X^\eta_i=\prod_{j\in I^\eta_i}(X-\chi(\gamma)^{j}+1).
  \]
  Then, we have:

  \begin{theorem}\label{theorem:integral-images}
   For $i=1,2$, let $X_i^\eta$ be as defined above, then $\uCol_i\left(\DD(T_{\bar{f}}(k-1))^{\psi=1}\right)^\eta\subset X_i^\eta\Lambda_{\calO_E}(\Gamma_1)$. Moreover, $X_i^\eta\Lambda_{\calO_E}(\Gamma_1)/\uCol_i\left(\DD(T_{\bar{f}}(k-1))^{\psi=1}\right)^\eta$ is pseudo-null.
  \end{theorem}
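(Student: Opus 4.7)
The containment $\uCol_i^\eta(\cdot) \subseteq X_i^\eta \Lambda_{\calO_E}(\Gamma_1)$ follows quickly from the rational version. Since $u = \chi(\gamma) \in 1 + p\Zp$, each factor $X - u^j + 1 = X - (u^j - 1)$ of $X_i^\eta$ is a distinguished polynomial, hence $X_i^\eta$ itself is distinguished, and Weierstrass preparation gives $X_i^\eta \Lambda_E(\Gamma_1) \cap \Lambda_{\calO_E}(\Gamma_1) = X_i^\eta \Lambda_{\calO_E}(\Gamma_1)$. By Theorem~\ref{theorem:basis}, the basis $(1+\pi)\vp(n_1),(1+\pi)\vp(n_2)$ of $(\vp^*\NN(T_{\bar f}))^{\psi=0}$ is a $\Lambda_{\calO_E}(\Gamma)$-basis, so $\uCol_i^\eta$ takes values in $\Lambda_{\calO_E}(\Gamma_1)$; combined with Corollary~\ref{corollary:projectioncoleman}, this yields the desired inclusion.

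For the pseudo-nullity, recall that over $\Lambda_{\calO_E}(\Gamma_1) \cong \calO_E[[X]]$ a finitely-generated module is pseudo-null if and only if it is finite, if and only if its characteristic ideal is the unit ideal. The strategy is to work with the joint Coleman map $\uCol^\eta = (\uCol_1^\eta, \uCol_2^\eta) : \NN(T_{\bar f}(k-1))^{\psi=1, \eta} \to \Lambda_{\calO_E}(\Gamma_1)^2$ and the integral analogue $S_\calO^\eta := S^\eta \cap \Lambda_{\calO_E}(\Gamma_1)^2$ of the rational image from Corollary~\ref{niceform}, and to verify two separate pseudo-nullity statements. First, $S_\calO^\eta / \image \uCol^\eta$ is pseudo-null, by comparing characteristic ideals: the matrix of $\uCol^\eta$ has determinant $\prod_{j=0}^{k-2}(X - u^j + 1)$ up to a unit in $\Lambda_{\calO_E}(\Gamma_1)$ (via Corollary~\ref{detuM}), and the integral module $S_\calO^\eta$ has the same characteristic ideal in $\Lambda_{\calO_E}(\Gamma_1)^2$ by an integral refinement of Proposition~\ref{general}; multiplicativity of characteristic ideals in the short exact sequence
\[
 0 \to S_\calO^\eta/\image \uCol^\eta \to \Lambda_{\calO_E}(\Gamma_1)^2/\image \uCol^\eta \to \Lambda_{\calO_E}(\Gamma_1)^2/S_\calO^\eta \to 0
\]
then forces the left-hand quotient to have trivial characteristic ideal. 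Second, $X_i^\eta \Lambda_{\calO_E}(\Gamma_1)/\pr_i(S_\calO^\eta)$ is pseudo-null, by a direct interpolation argument using Corollary~\ref{niceform}: lifting $F \in X_i^\eta \Lambda_{\calO_E}(\Gamma_1)$ to a pair $(F, G) \in S_\calO^\eta$ reduces to finding $G \in \Lambda_{\calO_E}(\Gamma_1)$ with prescribed values at the finitely many points $u^j - 1$ (either $0$ or $r_j^{-1}F(u^j-1)$), and the only obstructions are finitely many $\calO_E$-integrality and Newton-congruence conditions modulo $(u^j - u^{j'})$, yielding a finite cokernel. The two steps combine to give the result.

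The main technical obstacle is the integral characteristic-ideal comparison in the first step. Rationally, all determinants are defined only up to units in $\Lambda_E(\Gamma_1)$, and the integral statement requires ruling out spurious powers of $\varpi_E$ in the comparison between $\det \underline{M}$ and $\det S_\calO^\eta$. This is handled by exploiting the explicit form of $\underline{M}|_{X=0} = \bigl(\begin{smallmatrix} 0 & p^{k-1} \\ -1 & a_p \end{smallmatrix}\bigr)$ recalled after Corollary~\ref{relationat0}, together with the congruence $n_i \equiv \bar\nu_i \pmod{\pi}$ from Theorem~\ref{theorem:basis}: the former pins down the exact power of $p$ (matching the distinguished factor $X$ arising from $j = 0$), while the latter controls the higher-order behaviour, ensuring that $\underline{M}$ has integral determinant equal to the expected product up to a unit of $\Lambda_{\calO_E}(\Gamma_1)$.
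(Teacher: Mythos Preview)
Your argument for the containment $\uCol_i^\eta(\cdot) \subseteq X_i^\eta \Lambda_{\calO_E}(\Gamma_1)$ is correct and essentially matches the paper's (implicit) reasoning: integrality of $\uCol_i$ combined with the rational image $X_i^\eta \Lambda_E(\Gamma_1)$ and the fact that $X_i^\eta$ is distinguished gives the inclusion.

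The pseudo-nullity argument, however, has a genuine gap at exactly the point you flag as ``the main technical obstacle''. All of the determinant formulae you invoke --- Corollary~\ref{detuM} for $\det\underline{M}$ and Corollary~\ref{padicdet} for $\det\LL_V$ --- hold only up to units of $\calH(\Gamma_1)$ (or at best $\Lambda_E(\Gamma_1)$), and such units can absorb arbitrary powers of $\varpi_E$. Knowing the single value $\underline{M}|_{X=0} = A_\vp^T$ does not pin down the $\varpi_E$-content of $\det\underline{M}$ globally: a unit of $\calH(\Gamma_1)$ need not be determined by its value at $X=0$, and in any case the matrix $A_\vp^T$ has determinant $p^{k-1}$, not a unit. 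Likewise, the congruence $n_i \equiv \bar\nu_i \pmod{\pi}$ is a statement about the basis in $\Nrig(V)$, not directly about the $\varpi_E$-valuation of entries of $\underline{M}$ as elements of $\calH(\Gamma_1)$. So the comparison of characteristic ideals of $\Lambda_{\calO_E}(\Gamma_1)^2/\image\uCol^\eta$ and $\Lambda_{\calO_E}(\Gamma_1)^2/S_\calO^\eta$ is not established, and the exact sequence argument does not go through.

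The paper bypasses this difficulty entirely with a much more direct argument. Setting $X_k = \prod_{j=0}^{k-2}(X-u^j+1)$, one has the explicit inclusion
\[
\big(\vp^{k-1}(\pi)\,\vp^*\NN(T_{\bar f}(k-1))\big)^{\psi=0} \subset (1-\vp)\,\NN(T_{\bar f}(k-1))^{\psi=1}
\]
from \cite[proof of Proposition~4.11]{leiloefflerzerbes10}. Under the isomorphism $\J$ this says precisely that $X_k \Lambda_{\calO_E}(\Gamma_1)^{\oplus 2}$ is contained in $\image\uCol$, hence $X_k \in \image\uCol_i^\eta$ for each $i$. One then has the sandwich
\[
X_k\,\Lambda_{\calO_E}(\Gamma_1) \subseteq \uCol_i^\eta\big(\DD(T_{\bar f}(k-1))^{\psi=1}\big) \subseteq X_i^\eta\,\Lambda_{\calO_E}(\Gamma_1),
\]
the outer quotient $X_i^\eta \Lambda_{\calO_E}(\Gamma_1)/X_k\Lambda_{\calO_E}(\Gamma_1)$ is a free $\calO_E$-module of finite rank (since $X_k/X_i^\eta$ is distinguished), and the rational equality $\image\uCol_i^\eta \otimes E = X_i^\eta\Lambda_E(\Gamma_1)$ forces some power of $\varpi_E$ times each coset representative to lie in the image. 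This gives finite index without any integral determinant computation.
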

  \begin{proof}
   Let
   \[
    X_k=\prod_{j=0}^{k-2}(X-\chi(\gamma)^{j}+1).
   \]
   By \cite[proof of Proposition~4.11]{leiloefflerzerbes10}, we have
   \[
    \left(\vp^{k-1}(\pi)\vp^*\NN(T_{\bar{f}}(k-1))\right)^{\psi=0}\subset(1-\vp)\NN(T_{\bar{f}}(k-1))^{\psi=1}.
   \]
   This implies that $X_k\in\image(\uCol_i)$ for $i=1,2$. Hence, we have the following inclusions:
   \[
    X_k\Lambda_{\calO_E}(\Gamma_1)\subset\uCol_i\left(\DD(T_{\bar{f}}(k-1))^{\psi=1}\right)^\eta\subset X^\eta_i\Lambda_{\calO_E}(\Gamma_1)
   \]
   for $i=1,2$. Since $X_k$ is not divisible by $\varpi_E$, the quotient
   \[
    X^\eta_i\Lambda_{\calO_E}(\Gamma_1)/X_k\Lambda_{\calO_E}(\Gamma_1)
   \]
   is a free $\calO_E$-module of finite rank. Moreover, for a coset representative say $x$, there exists an integer $n$ such that
   \[
    \varpi_E^{n}x\in\uCol_i\left(\DD(T_{\bar{f}}(k-1))^{\psi=1}\right)^\eta.
   \]
   Therefore, $\uCol_i\left(\DD(T_{\bar{f}}(k-1))^{\psi=1}\right)^\eta$ is of finite index in $X_i^\eta\Lambda_{\calO_E}(\Gamma_1)$.
  \end{proof}


 \subsection{Surjectivity via a change of basis}\label{surjectivebasechange}

  Unfortunately, we do not have an explicit description of the sets $I^\eta_i$ given by Corollary~\ref{niceform}. However, this can be resolved by choosing a different basis:

  \begin{proposition}\label{change}
   Let $S$ be a subset of $\Lambda_E(\Gamma_1)^{\oplus2}$ as defined in Corollary~\ref{niceform}. Then, there exists $A\in GL(2,E)$ such that $SA=S'$ for some $S'$ which is of the form
   \[
    \left\{(F,G)\in\Lambda_E(\Gamma_1)^{\oplus2}:F(u^i-1)=r_i'G(u^i-1), 0\le i\le k-2\right\}
   \]
   for some non-zero elements $r_i'\in E$.
  \end{proposition}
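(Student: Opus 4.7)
The plan is to reinterpret $S$ geometrically via a collection of lines in $E^2$, one for each value $x_i = u^i - 1$, and then exploit the freedom to right-multiply by an element of $GL(2,E)$. Combining Corollary~\ref{niceform} with Theorem~\ref{equalitycoleman}, we may write
\[ S = \{(F,G) \in \Lambda_E(\Gamma_1)^{\oplus 2} : (F(x_i), G(x_i)) \in V_i \text{ for } 0 \le i \le k-2\}, \]
where each $V_i$ is a one-dimensional $E$-subspace of $E^2$; namely $V_i = 0\oplus E$ if $i \in I_1^\eta$, $V_i = E\oplus 0$ if $i \in I_2^\eta$, and $V_i$ is the line $\{(r_i g, g) : g \in E\}$ if $i \in I_3^\eta$. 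Forms of the shape required by the proposition are precisely those for which \emph{every} $V_i$ is a line of this last type, i.e.~a line avoiding both coordinate axes.

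For any $A \in GL(2,E)$, the map $(F,G) \mapsto (F,G)A$ is an $E$-linear automorphism of $\Lambda_E(\Gamma_1)^{\oplus 2}$, and evaluation at $x_i$ commutes with it; hence $SA$ is again of the above form, with $V_i$ replaced by $V_i A$. It therefore suffices to find $A \in GL(2,E)$ such that, for each $i \in \{0,\dots,k-2\}$, the line $V_i A$ meets neither $E\oplus 0$ nor $0 \oplus E$.

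For a fixed nonzero $v_i \in V_i$, the condition that $v_i A$ have a zero coordinate cuts out the union of two linear hyperplanes in $M_2(E) \cong E^4$ (one for each coordinate), defined by the vanishing of a column of $A$ paired against $v_i$. The ``bad'' locus in $GL(2,E)$ is the union over $i = 0,\dots,k-2$ of these finitely many proper Zariski-closed subsets. Since $E$ is infinite, this union does not exhaust $GL(2,E)$, and any $A$ in the complement yields $SA = S'$ of the required form. The main (minor) point to verify is that the resulting scalars $r_i'$ are all nonzero, but this is immediate: $V_i A$ is a one-dimensional subspace disjoint from both coordinate axes, so can be written as $\{(r_i' g, g) : g \in E\}$ with $r_i' \ne 0$.
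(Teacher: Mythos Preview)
Your proof is correct, and the geometric reinterpretation is clean: reducing to the statement that finitely many proper Zariski-closed subsets cannot cover $GL(2,E)$ over an infinite field is entirely sound.

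The paper takes a more explicit route. It restricts attention to matrices of the special shape
\[
 A = \begin{pmatrix} 1 & e_2 \\ e_1 & 1 \end{pmatrix}, \qquad e_1 e_2 \ne 1,
\]
and computes directly how each of the three types of condition ($F(u^i-1)=0$, $G(u^i-1)=0$, $F(u^i-1)=r_i G(u^i-1)$) transforms under right multiplication by $A$. The upshot is that the only obstructions are $e_1 = -r_i$ or $e_2 = -r_i^{-1}$ for some $i \in I_3^\eta$, so any choice of nonzero $e_1, e_2$ avoiding these finitely many values works.

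The main practical advantage of the paper's explicit construction is that it makes the subsequent integrality refinement transparent: one can take $e_1, e_2$ to be nonzero elements of the maximal ideal of $\mathcal{O}_E$ (still avoiding the finitely many bad values), so that $A \in GL(2,\mathcal{O}_E)$. This is exactly what is used immediately afterwards to obtain the integral version of the result. Your Zariski-closure argument, as written, only produces $A \in GL(2,E)$; to recover the integral statement you would need a small extra observation (e.g.\ intersecting with the two-parameter family above, or arguing that the bad locus meets $GL(2,\mathcal{O}_E)$ in a proper subset). Conversely, your approach is basis-free and makes clear that nothing special about the two-dimensional situation is being used.
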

  \begin{proof}
   Let $e_1,e_2\in E$ be non-zero elements such that $e_1e_2\ne1$, then
   \[
    \begin{pmatrix}
		1 & e_2 \\
	 	e_1 & 1 \\
    \end{pmatrix}
    \in GL(2,E).
   \]
   Let $(F,G)\in \Lambda_E(\Gamma_1)^{\oplus2}$, we write $\begin{pmatrix}F' & G'\end{pmatrix}=\begin{pmatrix}F&G\end{pmatrix}A=\begin{pmatrix}F+e_1G&G+e_2F\end{pmatrix}$. We have
   \begin{align*}
    F(u^i-1)=0 &\iff F'(u^i-1)=e_1G'(u^i-1);\\
    G(u^i-1)=0 &\iff G'(u^i-1)=e_2F'(u^i-1);\\
    F(u^i-1)=r_iG(u^i-1)&\iff (e_2r_i+1)F'(u^i-1)=(e_1+r_i)G'(u^i-1).
   \end{align*}
   Therefore, we are done on choosing $e_2\ne -r_i^{-1}$ and $e_1\ne-r_i$ for all $i\in I^\eta_3$.
  \end{proof}

  \begin{remark}
   In the construction of the Coleman maps, replacing $\begin{pmatrix}n_1\\ n_2\end{pmatrix}$ by $A\begin{pmatrix}n_1\\ n_2\end{pmatrix}$ where $A\in \GL(2,E)$ is equivalent to replacing $\underline{M}$ by $A\underline{M}$.
  \end{remark}

  Therefore, on multiplying $\underline{M}$ by an appropriate matrix in $GL(2,E)$ on the left, we can make both Coleman maps surjective. But we cannot assume $\underline{M}|_{X=0}=A_{\vp}^T$ any more.

  In Proposition~\ref{change}, it is possible to choose $e_1$ and $e_2$ as elements of the maximal ideal of $\calO_E$ so that $A\in GL(2,\calO_E)$. Therefore, we have

  \begin{theorem}
   There exists a basis of $\NN(T_{\bar{f}})$ such that the corresponding Coleman maps have the following properties:
   \[
   \Lambda_{\calO_E}(\Gamma_1)/\uCol_i(\DD(T_{\bar{f}}(k-1))^{\psi=1})^{\eta}
   \]
   is pseudo-null for $i=1,2$.
  \end{theorem}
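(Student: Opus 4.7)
The plan is to combine Proposition~\ref{change}, the integrality remark that immediately follows it, and Theorem~\ref{theorem:integral-images}. For each character $\eta$ of $\Delta$, Proposition~\ref{change} produces a matrix of the form
\[
A = \begin{pmatrix} 1 & e_2 \\ e_1 & 1 \end{pmatrix} \in GL(2, E)
\]
whose right action on the image set $S^\eta$ from Corollary~\ref{niceform} eliminates the vanishing-type conditions, turning both $I_1^\eta$ and $I_2^\eta$ into $\varnothing$. The explicit constraints on $(e_1, e_2)$ are $e_1, e_2 \ne 0$, $e_1 \ne -r_i$, and $e_2 \ne -r_i^{-1}$ for each $i \in I_3^\eta$. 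Since $\Delta$ is finite and each $I_3^\eta$ is finite, the union of these forbidden sets across all $\eta$ is still finite, so a single pair $(e_1, e_2)$ can be chosen to satisfy them all simultaneously.

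Next I would pick $e_1, e_2$ in the maximal ideal $\mathfrak{m}_E$ of $\calO_E$ avoiding this finite list; this is possible because $\mathfrak{m}_E$ is infinite. Then $\det A = 1 - e_1 e_2 \in 1 + \mathfrak{m}_E \subseteq \calO_E^\times$, so $A$ in fact lies in $GL(2, \calO_E)$. By the remark following Proposition~\ref{change}, pre-multiplying $\underline{M}$ by $A$ corresponds to replacing the basis $(n_1, n_2)$ of $\NN(T_{\bar{f}})$ by $A (n_1, n_2)^T$. Because $A$ is integrally invertible, the new tuple is again an $\calO_E \otimes \AA_{\Qp}^+$-basis of $\NN(T_{\bar{f}})$, so the integrality hypothesis underlying the construction of the Coleman maps in Section~\ref{review} is preserved.

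With respect to this new basis, for every $\eta$ we have $I_1^\eta = I_2^\eta = \varnothing$, so the product $X_i^\eta = \prod_{j \in I_i^\eta}(X - u^j + 1)$ is empty and equals $1$. Theorem~\ref{theorem:integral-images} then identifies $\uCol_i\bigl(\DD(T_{\bar{f}}(k-1))^{\psi=1}\bigr)^\eta$ as a submodule of finite index in $X_i^\eta \Lambda_{\calO_E}(\Gamma_1) = \Lambda_{\calO_E}(\Gamma_1)$ for $i = 1, 2$. The quotient is therefore finite; since $\Lambda_{\calO_E}(\Gamma_1)$ is a two-dimensional regular local ring, any finite module over it is supported at the maximal ideal and hence pseudo-null, which completes the proof. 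The only step needing any real thought is the simultaneous choice of $A$ across all characters $\eta$; this is the main (and quite mild) obstacle, settled by the finiteness of the forbidden set combined with the infinitude of $\mathfrak{m}_E$.
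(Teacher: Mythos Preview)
Your argument is correct and follows essentially the same route as the paper: invoke Proposition~\ref{change} with $e_1,e_2$ chosen in the maximal ideal of $\calO_E$ so that $A\in GL(2,\calO_E)$, observe that this yields a new integral basis, and then apply Theorem~\ref{theorem:integral-images} with $X_i^\eta=1$. The only point you add beyond the paper's terse proof is the observation that a \emph{single} $A$ can be chosen to work for all characters $\eta$ of $\Delta$ at once, by avoiding the finite union of forbidden values; the paper works throughout Section~5 with a fixed $\eta$ and does not make this uniformity explicit, so your version is slightly sharper in that respect, but the method is the same.
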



\section*{Acknowledgements}

  We are very grateful to Bernadette Perrin-Riou for giving us some unpublished notes about her $p$-adic regulator. 

\renewcommand{\MR}[1]{%
  MR \href{http://www.ams.org/mathscinet-getitem?mr=#1}{#1}.
}
\providecommand{\bysame}{\leavevmode\hbox to3em{\hrulefill}\thinspace}

\end{document}